\newcommand{\leqnomode}{\tagsleft@true}
\newcommand{\reqnomode}{\tagsleft@false}
\numberwithin{equation}{section}
\theoremstyle{plain}
\newtheorem{thm}{Theorem} 
\newtheorem{theorem}[equation]{Theorem} 
\newtheorem{corollary}[equation]{Corollary} 
\newtheorem{lemma}[equation]{Lemma}
\newtheorem{proposition}[equation]{Proposition}
\newtheorem{remark}[equation]{Remark}
\theoremstyle{definition}
\newtheorem{example}[equation]{Example}
\newtheorem{hypothesis}[equation]{Hypothesis} 
\newtheorem{question}[equation]{Question}
\DeclareMathOperator\rAut{rAut}
\DeclareMathOperator\Aut{Aut}
\DeclareMathOperator\gr{gr}
\DeclareMathOperator\lm{lm}
\DeclareMathOperator\reg{reg}
\DeclareMathOperator\Span{Span}
\DeclareMathOperator\tr{tr}
\newcommand\NN{\mathbb N}
\newcommand\ZZ{\mathbb Z}
\newcommand\cB{\mathcal B}
\newcommand\fp{\mathfrak{p}}
\newcommand\cnt{\mathcal Z}
\renewcommand{\int}{\mathrm{int}}
\newcommand\inv{^{-1}}
\newcommand{\Maxspec}{\ensuremath{\operatorname{Maxspec}}}
\newcommand\iso{\cong}
\newcommand\kk{\mathds{k}}
\newcommand\kq{\kk[q^{\pm 1}]}
\newcommand\tensor{\otimes}
\newcommand\qp{\kk_\mu[u,v]}
\newcommand\mq{M_\mu}
\newcommand\wa{A_1^\mu}
\newcommand\Aep{A_{\epsilon}}
\newcommand\Bep{B_{\epsilon}}
\newcommand\Cep{C_{\epsilon}}
\newcommand\Rep{R_{\epsilon}}
\newcommand\tornado{\xi}
\newcommand{\qbinom}[2]{\genfrac{[}{]}{0pt}{0}{#1}{#2}}
\renewcommand\mod{~\mathrm{mod}~}
\title{Discriminants of Taft algebra smash products and applications}
\author[Gaddis]{Jason Gaddis}
\address{Miami University, Department of Mathematics, 301 S. Patterson Ave., Oxford, Ohio 45056} 
\email{gaddisj@miamioh.edu}
\author[Won]{Robert Won}
\address{Wake Forest University, Department of Mathematics and Statistics, P. O. Box 7388, Winston-Salem, NC 27109} 
\email{wonrj@wfu.edu}
\author[Yee]{Daniel Yee}
\address{Bradley University, Department of Mathematics, 1501 W. Bradley Ave., Peoria, IL 61625}
\email{dyee@fsmail.bradley.edu}
\subjclass[2010]{16W20,16W22,11R29,16S36,16S40}
\keywords{Taft algebra, discriminant, Poisson algebra, Ore extension, smash product, automorphism group, Azumaya locus}
\begin{document}

\begin{abstract} 
A general criterion is given for when the center of a Taft algebra smash product is the fixed ring. 
This is applied to the study of the noncommutative discriminant. 
Our method relies on the Poisson methods of Nguyen, Trampel, and Yakimov,
but also makes use of Poisson Ore extensions.
Specifically, we fully determine the inner faithful actions of Taft algebras on quantum planes and quantum Weyl algebras. 
We compute the discriminant of the corresponding smash product and apply it to 
compute the Azumaya locus and restricted automorphism group.
\end{abstract}

\maketitle

\section{Introduction}

Throughout $\kk$ is 
an algebraically closed, characteristic zero field
and all algebras are $\kk$-algebras.
Given an algebra $R$, we denote its set of units by $R^\times$ 
and its center by $\cnt(R)$.
For $a,b \in R$ we write $a =_{R^\times} b$ if there exists $c \in R^\times$
such that $a=cb$.

The discriminant is an important invariant of an algebra
and has been adapted to the noncommutative setting by
Ceken, Palmieri, Wang, and Zhang.
It has been used to study automorphism and isomorphism problems, 
locally nilpotent derivations, and more recently the Azumaya loci of PI algebras \cite{BY,CPWZ1,CPWZ2,NTY}.

In \cite{GKM}, Kirkman, Moore, and the first-named author
gave a formula for computing the discriminant of certain skew group algebras
and applied this to compute automorphism groups.
The goal of this paper is to consider this problem for certain smash products
by $H_n(\lambda)$, the $n$th Taft algebra.
Such actions have been studied previously \cite{All,B,KW}.

Given an algebra $R$ and a Hopf algebra $H$,
we say that $H$ {\sf acts on $R$} (from the left)
if $R$ is a left $H$-module via $h\otimes r\mapsto h\cdot r$,
$h \cdot 1_R = \varepsilon(h)1_R$, and
$h \cdot (rr') = \sum (h_1 \cdot r)(h_2 \cdot r')$
for all $h \in H$ and $r,r' \in R$.
Alternatively, we say $R$ is an {\sf $H$-module algebra}.
The action is said to be {\sf inner faithful} if there is 
no nonzero Hopf ideal that annihilates $R$.
When $R$ is a left $H$-module algebra, the 
{\sf smash product algebra} $R\# H$ is
$R \tensor H$ as a $
\kk$-vector space, with elements denoted by $r\#h$ for $r \in R$ and $h \in H$, 
and multiplication given by
\[ (r\#h)(r'\#h') = \sum a(h_1 \cdot b)\#h_2h' 
\quad\text{for all } r,r' \in R \text{ and } h,h' \in H.\]

Let $n > 1$ and $\lambda$ be a primitive $n$th root of unity.
The $n$th Taft algebra $H_n(\lambda)$ \cite{T} is the $\kk$-algebra generated
by $g$ and $x$ subject to the relations
\[ g^n=1, \quad x^n=0, \quad xg = \lambda gx.\]
The coalgebra structure on $H_n(\lambda)$ is given by
\begin{align*}
&\Delta(g)=g \tensor g, \quad \Delta(x) = g \tensor x + x \tensor 1, \qquad
\varepsilon(g) = 1, \quad \varepsilon(x)=0,
\end{align*}
and the antipode by $S(g) = g^{n-1}$, $S(x) = -g^{n-1}x$.
This gives $H_n(\lambda)$ the structure of a Hopf algebra.
For an $H_n(\lambda)$-module algebra $R$, we set  
$R^{\langle x \rangle} = \{ r \in R \mid x(r) = 0\}$ and 
$R^{\langle g \rangle} = \{ r \in R \mid g(r) = r\}$.
Furthermore, we set 
$R^{H_n(\lambda)} 
	= \{ r \in R \mid h(r) 
	= \varepsilon(h)r \mbox{ for all } h \in H_n(\lambda) \}$
to be the {\sf fixed ring} of $R$ under the $H_n(\lambda)$-action.
It is not difficult to see that 
$R^{H_n(\lambda)} = R^{\langle x \rangle}  \cap R^{\langle g \rangle}$.

Recall that an {\sf inner automorphism} of an algebra $R$ is one that is given by conjugation, i.e. $r\mapsto uru^{-1}$, where $u\in R^\times$.
We say an automorphism $g$ of a domain $R$ is {\sf $X$-inner} if
there exists $a \in R$ such that $ra=ag(r)$ for all $r \in R$.
This is not the standard definition of $X$-inner but agrees
when $R$ is a prime Goldie algebra because in this case the automorphism
$g$ becomes inner when extended to the classical quotient ring \cite[Theorem 1.4]{M}.

\begin{thm}[Theorem \ref{thm.cnt}]
Let $H=H_n(\lambda)$ and let $R$ be an $H$-module algebra that is a domain.
Suppose that the action of $H$ on $R$ is inner faithful and that no nontrivial power of $g$ 
is $X$-inner when restricted to $R^{\langle x \rangle}$.
Then $\cnt(R\# H) = \cnt(R) \cap R^H$.
\end{thm}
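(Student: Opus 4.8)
\section*{Proof proposal}

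The plan is to prove the two inclusions separately, the reverse one being the substance. Throughout I use the straightening relations in $R\# H$ that come from the smash product multiplication, namely $gr=(g\cdot r)g$ and $xr=(g\cdot r)x+(x\cdot r)$ for $r\in R$, together with $g^n=1$, $x^n=0$, and $xg=\lambda gx$. For the inclusion $\cnt(R)\cap R^H\subseteq\cnt(R\# H)$ I would simply take $z\in\cnt(R)\cap R^H$; since $z$ is central in $R$ with $g\cdot z=z$ and $x\cdot z=0$, the two straightening relations give $gz=zg$ and $xz=zx$, so $z$ commutes with a generating set of $R\# H$.

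For the reverse inclusion I would first decompose $R=\bigoplus_{k\in\ZZ/n}R_k$ into $g$-eigenspaces $R_k=\{r\in R\mid g\cdot r=\lambda^k r\}$ (possible as $g$ acts semisimply), noting that $R^{\langle g\rangle}=R_0$ and that $x\cdot R_k\subseteq R_{k-1}$. Writing a central element in the PBW form $z=\sum_{0\le i,j\le n-1}d_{ij}g^ix^j$ with $d_{ij}\in R$, I would extract three families of constraints. From $gz=zg$ come the eigenspace conditions $d_{ij}\in R_j$. From $xz=zx$ comes a skew-derivation recursion
\[ x\cdot d_{ij}=(1-\lambda^{i+j-1})\,d_{i,j-1}, \]
which in particular (comparing at one degree above the top) forces the top $x$-degree column to collapse to a single \emph{diagonal} term $d_{-m,m}g^{-m}$. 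From $zr=rz$ for $r\in R$ come one relation per basis element; the key point is that restricting these to $r\in R^{\langle x\rangle}$ annihilates every skew-derivation correction (because $x\cdot$ kills $R^{\langle x\rangle}$ while $g$ preserves it), leaving the clean relations $d_{ij}(g^{i+j}\cdot r)=r\,d_{ij}$ for all $r\in R^{\langle x\rangle}$.

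This is where the two hypotheses enter. For an off-diagonal coefficient $i+j\not\equiv 0$, once one knows $d_{ij}\in R^{\langle x\rangle}$ (to be supplied by the recursion above), the relation $d_{ij}(g^{i+j}\cdot r)=r\,d_{ij}$ exhibits $d_{ij}$ as a witness that $g^{i+j}$ is $X$-inner on $R^{\langle x\rangle}$, so the hypothesis forces $d_{ij}=0$. This should reduce $z$ to a diagonal element $\sum_j d_{-j,j}g^{-j}x^j$ with each $d_{-j,j}\in R^{\langle x\rangle}\cap R_j\cap\cnt(R)$. To kill the positive-degree diagonal terms I would return to the full relation $zr=rz$ and read off a subleading coefficient: the iterated bracket of $x^p$ past $r$ contributes the factor $[p]_\lambda:=1+\lambda+\cdots+\lambda^{p-1}$ times $g^{p-1}\cdot(x\cdot r)$, and since $[p]_\lambda\neq 0$ for $1\le p\le n-1$, a nonzero diagonal term of degree $p$ would force $x\cdot r=0$ for all $r$, contradicting inner faithfulness (as $(x)$ is a Hopf ideal of $H$). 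What survives is $z=d_{00}$ with $d_{00}\in R_0\cap R^{\langle x\rangle}=R^H$, central in $R$, hence $z\in\cnt(R)\cap R^H$.

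The hard part will be that these three families of relations are coupled across $x$-degrees by the skew-derivation recursion: one cannot solve degree by degree independently, because establishing $d_{ij}\in R^{\langle x\rangle}$ (needed to invoke the $X$-inner hypothesis on $R^{\langle x\rangle}$ rather than on all of $R$) relies on the vanishing of lower-degree off-diagonal terms, whereas the diagonal terms are removed by the distinct mechanism of inner faithfulness. Organizing this as a single downward induction on the top $x$-degree, interleaving the $X$-inner step for off-diagonal coefficients with the inner-faithfulness step for diagonal coefficients, is the delicate point. A secondary technicality is the top degree $j=n-1$: there the relation $x^n=0$ removes precisely the constraint that otherwise forces the top column to be diagonal, so that case must be treated on its own.
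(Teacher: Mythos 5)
Your proposal is correct: the three families of relations you extract are exactly the ones the paper derives (Lemma \ref{lem.cnt1} and equations \eqref{eq.cntx}, \eqref{eq.cntu}, \eqref{eq.cntv2}), and both of your killing mechanisms are valid. But you distribute the work between the two hypotheses differently from the paper, and the ``delicate interleaving'' you worry about at the end is not actually there. The clean organization of your route is an \emph{upward} induction on $j$ within each fixed row $i$: the recursion $x\cdot d_{i,j}=(1-\lambda^{i+j-1})d_{i,j-1}$ places $d_{i,j}$ in $R^{\langle x\rangle}$ as soon as $d_{i,j-1}$ is either zero or diagonal, because in the diagonal case the scalar $1-\lambda^{i+j-1}$ itself vanishes; so the one surviving diagonal entry in a row never obstructs showing that every entry above it lies in $R^{\langle x\rangle}$. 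Hence all off-diagonal entries are killed by the $X$-inner hypothesis in a single bottom-up pass, no downward induction on the top $x$-degree is needed, and the $j=n-1$ column requires no special treatment (the constraint lost to $x^n=0$ lives at degree $n$ and is never used). Your endgame is then sound: with row $i\equiv -p$ reduced to the single entry $d_{i,p}$, the coefficient of $g^ix^{p-1}$ in $[z,r\#1]$ is $[p]_\lambda\, d_{i,p}\,\bigl(g^{i+p-1}\cdot(x\cdot r)\bigr)$, and $d_{i,p}\neq 0$ would force $x$ to act by zero, contradicting inner faithfulness since $(x)$ is a Hopf ideal. The paper inverts the division of labor: it applies the $X$-inner hypothesis only to the base cases $r_{i,0}$ for $0<i<n$ and to $r_{0,1}$, then commutes $z$ with a single element $y$ satisfying $x(y)\neq 0$ and $x^2(y)=0$ (this is where inner faithfulness enters) to get the propagation relation \eqref{eq.cntv2}, which pushes vanishing up each row and kills diagonal and off-diagonal entries alike. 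Both routes work; the paper's uses the $X$-inner hypothesis only $2n-2$ times and needs no separate diagonal step, while yours invokes it for every off-diagonal entry but makes the role of each hypothesis more transparent.
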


Specifically, we focus on inner faithful actions of $H=H_n(\lambda)$ 
on the quantum plane $A = \qp$ or on the quantum Weyl algebra $A = \wa$ such that $A\#H$ is prime.
By the previous theorem, we obtain $\cnt(A\#H) = A^H = \kk[u^n, v^n]$ when $|\mu| = n > 1$
(Corollary \ref{cor.qpcnt}).

For algebras that may be realized as a specialization, 
Brown and Gordon \cite{BG} showed that there is an induced Poisson structure on the center.
The techniques of Nguyen, Trampel, and Yakimov \cite{NTY}
allow one to determine the factors of the discriminants of such algebras
by first finding the Poisson prime elements of the center.
We realize $A\#H$ as a quotient of an Ore extension, 
which itself may be realized as a specialization.
In contrast to previous work, wherein it was required to have
prior knowledge of the Poisson geometry related to the induced Poisson structure,
we show that there is a connection between Ore extensions and Poisson Ore extensions
(Proposition \ref{prop.indp2}).
We then apply the methods of Oh \cite{OH1,OH2} to find the Poisson primes.

\begin{thm}[Theorem \ref{thm.disc}] 
Let $H=H_n(\lambda)$ and $A = \qp$ or $A=\wa$ with $|\mu|=n>1$.
If $H$ acts linearly and inner faithfully on $A$, then
$d(A\#H /A^H) =_{\kk^\times} u^{2n^4(n-1)}$.
\end{thm}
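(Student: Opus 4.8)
The plan is to combine the Poisson machinery of \cite{NTY}, which tells us \emph{which} central primes can divide the discriminant, with an elementary degree count that fixes their multiplicities. Write $Z := \cnt(A\#H)$. By Corollary~\ref{cor.qpcnt} we have $Z = A^H = \kk[u^n,v^n]$, and $A\#H$ is a free $Z$-module of rank $n^4$ with monomial basis $\{u^av^bg^cx^e : 0\le a,b,c,e\le n-1\}$; its PI degree is $n^2$. The discriminant $d(A\#H/A^H)$ is the determinant of the $n^4\times n^4$ regular-trace matrix $(\tr(e_ie_j))$ in this basis, an element of $Z$ well defined up to $\kk^\times$. Expanding this determinant directly is hopeless, so instead I would (i) show that, up to a scalar, $d(A\#H/A^H)$ is a power of the single Poisson prime $u^n$, and (ii) determine the exponent by computing the degree of $d(A\#H/A^H)$.

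For step (i), I would realize $A\#H$ as a quotient of an Ore extension that itself specializes from a one-parameter family, so that by \cite{BG} the center $Z$ acquires an induced Poisson bracket. Proposition~\ref{prop.indp2} identifies this bracket with that of a Poisson Ore extension, making it explicitly computable, and the methods of \cite{OH1,OH2} then produce the list of Poisson prime elements of $Z$. The result of \cite{NTY} guarantees that the principal ideal $(d(A\#H/A^H))$ is a Poisson ideal of $Z$, so $d(A\#H/A^H)$ factors (up to $\kk^\times$) as a product of powers of Poisson primes, and these are exactly the primes cutting out the non-Azumaya locus. The key point to extract from the explicit bracket is the asymmetry between $u$ and $v$ forced by the skew derivation $x$: the prime $u^n$ is Poisson and lies in the ramification locus, whereas $v^n$ either fails to be Poisson or the algebra stays Azumaya along its zero locus, so $v^n$ does not divide $d$. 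This yields $d(A\#H/A^H) =_{\kk^\times} (u^n)^a$ for some $a\ge 0$.

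For step (ii), I would use the grading. Since $H$ acts linearly, $A\#H$ is graded with $\deg u=\deg v=1$ and $\deg g=\deg x=0$, and $Z$ is a graded subalgebra with $\deg u^n=\deg v^n=n$. The regular trace is degree preserving, so each entry $\tr(e_ie_j)$ is homogeneous of degree $\deg e_i+\deg e_j$, and hence the nonzero determinant $d(A\#H/A^H)$ is homogeneous of degree $2\sum_i\deg e_i$. Summing $\deg(u^av^bg^cx^e)=a+b$ over the monomial basis gives $\sum_i\deg e_i = n^2\sum_{a,b=0}^{n-1}(a+b)=n^2\cdot n^2(n-1)=n^4(n-1)$, so $\deg d(A\#H/A^H)=2n^4(n-1)$. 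Comparing with $d(A\#H/A^H)=_{\kk^\times}(u^n)^a$, which has degree $an$, forces $a=2n^3(n-1)$, i.e.\ $d(A\#H/A^H)=_{\kk^\times}u^{2n^4(n-1)}$. For $A=\wa$ the algebra is only filtered, so here I would run the same degree count on the associated graded algebra, which returns to the quantum plane case, and transfer the conclusion through a compatible filtration argument.

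The main obstacle is step (i), and specifically the exclusion of $v^n$: one must compute the induced Poisson structure on $Z$ precisely enough---via Proposition~\ref{prop.indp2} and the explicit form of the $H_n(\lambda)$-action---to verify both that the complete list of height-one Poisson primes is accounted for and that $v^n$ genuinely does not contribute, since any missed or spurious prime would corrupt the factorization before the degree count is applied. Once the shape $(u^n)^a$ is established, the degree argument in step (ii) is routine.
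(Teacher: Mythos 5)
Your overall strategy---Poisson primes to constrain the factorization, then a degree count to fix exponents---is the paper's strategy, but there is a genuine gap in step (i): the Poisson/\cite{NTY} machinery cannot be run directly on $A\#H$ over $Z=\kk[u^n,v^n]$. The Taft relations $x^n=0$, $g^n=1$ do not sit inside a $\kq$-family in the way \cite{NTY} requires, which is why the paper instead works with the genuine specialization $R_\mu=A_\mu[x;\tau,\delta]$ over $C_\mu=\kk[u^n,v^n,x^n]$, computes $d(R_\mu/C_\mu)$ there, and only afterwards transfers to $A\#H\iso A[x;\tau,\delta][g;\tau]/(x^n,g^n-1)$ via \cite[Theorem 6.1]{GKM} and \cite[Proposition 4.7]{CPWZ2}. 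Your suggestion that the bracket ``descends'' to $Z$ fails concretely: $(z_3)=(x^n)$ is not a Poisson ideal of $C_\mu$, since $\{z_3,z_2\}=c_1z_2z_3+c_2z_1$ with $c_2\neq 0$, so there is no induced Poisson structure on the quotient $\kk[u^n,v^n]$ to which one could apply the Poisson-prime factorization theorem.

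The second, related gap is your exclusion of $v^n$. If one did have the bracket $\{u^n,v^n\}=b_1u^nv^n$ on $\kk[u^n,v^n]$, then $u^n$ and $v^n$ are \emph{both} Poisson primes, and your degree count (with $\deg u=\deg v=1$) is symmetric in $u$ and $v$, so it can only determine the sum of the two exponents, not rule out $v^n$. In the paper the asymmetry is captured by the third variable $z_3=x^n$: for $\qp$ the Poisson primes of $C_\mu$ are $z_1=u^n$ and $z_2z_3+\tornado z_1$ (Lemma \ref{lem.pprimes1}), and even there the degree count in Lemma \ref{lem.dRlambda} only yields $\alpha_1+\alpha_2=2n^2(n-1)$; the individual exponents are pinned down by comparing with the $\wa$ case, where there is a \emph{single} Poisson prime $z_1z_2z_3+\tornado z_1^2+b_2b_1\inv z_3$, via the compatibility of discriminants with filtrations \cite[Proposition 4.10]{CPWZ2}. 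Only after passing to the quotient does $z_2z_3+\tornado z_1$ collapse to a scalar multiple of $u^n$, producing the pure power $u^{2n^4(n-1)}$. Your step (ii) degree computation is arithmetically consistent with the answer, but it is doing no work until the factorization shape is actually established, and the route you propose for establishing it does not go through.
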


We apply the discriminant to determine the Azumaya locus of $A\#H$
(Corollary \ref{cor.azumaya}).
Additionally, when when $n=2$, we determine the subgroup of $\Aut(A\#H)$ 
that fixes $H$ up to scalar, which we call the $H$-restricted automorphism group of $A\#H$, denoted $\rAut(A\#H)$
(Theorem \ref{thm.raut}).

\section{Taft actions on quantum algebras}
\label{sec.actions}

Two well-known families of quantum algebras are the  quantum planes 
\[ \qp = \kk\langle u,v \mid uv-\mu vu\rangle\] and 
the quantum Weyl algebras 
\[ \wa = \kk\langle u,v \mid uv-\mu vu-1\rangle.\]
Both have a $\kk$-basis $\{u^iv^j\}$ and this defines a natural
filtration by degree.
Moreover, both algebras are generated in degree one.
We study inner faithful actions of Taft algebras on 
$A=\qp$ or $A=\wa$ with the added hypothesis that the action is {\sf linear}, 
that is, $g(u),g(v),x(u),x(v) \in \kk u + \kk v$.

Recall that when $\mu \neq \pm 1$ we have
$\Aut(\qp)=(\kk^\times)^2$ and
when $\mu = -1$,
$\Aut(\kk_{-1}[u,v])=(\kk^\times)^2 \rtimes \{\omega\}$
where $\omega$ is the automorphism that exchanges the generators $u$ and $v$ \cite[Proposition 1.4.4]{AC}.
Similarly, $\Aut(\wa)=\kk^\times$ for $\mu \neq \pm 1$ and
$\Aut(A_1^{-1}) = \kk^\times \rtimes \{\omega\}$ \cite[Proposition 1.5]{AD}.
We use these facts throughout without further comment.

\begin{proposition}
\label{prop.daction}
Let $A= \kk_{\mu}[u,v]$ or $A=\wa$ with $|\mu|=m>1$. 
Then $H_n(\lambda)$ acts linearly and inner faithfully on $A$ 
if and only if $m \mid n$.
Moreover, the action is given by one of the following:
\begin{enumerate}
\item $g(u) = \mu u$, $g(v) = \lambda\mu v$, $x(u) = 0$, $x(v) = \eta u$ for some $\eta \in \kk^\times$, and if $A= \wa$ then $\lambda = \mu^{-2}$; or \label{list.daction}
\item $g(u) = \lambda\mu\inv u$, $g(v) = \mu\inv v$, $x(u) = \eta v$ for some $\eta \in \kk^\times$, $x(v) = 0$, and if $A = \wa$ then $\lambda = \mu^2$.
\end{enumerate}
\end{proposition}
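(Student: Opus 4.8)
The plan is to convert the module-algebra data into linear algebra on the two-dimensional space $V=\kk u+\kk v$, classify the possibilities, and then read off the divisibility condition from inner faithfulness. Since $g$ is grouplike it acts as an algebra automorphism, and by linearity $g|_V$ is invertible, so (using the descriptions of $\Aut(A)$ recalled above, with $\mu=-1$ handled separately because of the extra generator $\omega$) we may take $g(u)=\alpha u$ and $g(v)=\beta v$. Because $\Delta(x)=g\otimes x+x\otimes 1$, the module-algebra axiom reads $x(ab)=g(a)x(b)+x(a)b$, so $x$ is a $g$-skew derivation, and linearity makes $X:=x|_V$ an endomorphism of $V$. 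The Taft relations become operator identities on $V$: writing $G=g|_V$, the relation $g^n=1$ gives $G^n=\id$; $x^n=0$ forces $X$ nilpotent, hence $X^2=0$; and $xg=\lambda gx$ gives $XG=\lambda GX$.

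Next I would solve these equations. With $G$ diagonal, comparing entries in $XG=\lambda GX$ and using $\lambda\neq 1$ forces the diagonal entries of $X$ to vanish, while $X^2=0$ forces one of the two off-diagonal entries to vanish; ruling out $X=0$ (inner faithfulness, below) leaves exactly two triangular shapes, each imposing an eigenvalue relation $\alpha=\lambda\beta$ or $\beta=\lambda\alpha$. I then impose that $x$ annihilates the defining relation of $A$: applying the skew-Leibniz rule to $uv-\mu vu$ (respectively $uv-\mu vu-1$) and setting the result to zero pins the eigenvalues to powers of $\mu$ and produces exactly the formulas in \eqref{list.daction}. For $A=\wa$ the further requirement that $g$ preserve the relation forces $\alpha\beta=1$, which turns the eigenvalue relation into $\lambda=\mu^{-2}$ in case (1) and $\lambda=\mu^{2}$ in case (2).

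The decisive step, which I expect to be the main obstacle, is the inner-faithfulness criterion, and here I would first determine the Hopf ideals of $H_n(\lambda)$. One checks that $(x)$ is a Hopf ideal with quotient $\kk\ZZ_n$, that $(g^d-1)$ is a Hopf ideal for each $d\mid n$, and that these exhaust the nonzero Hopf ideals; moreover for $d<n$ the inequality $\lambda^d\neq 1$ forces $x\in(g^d-1)$. Consequently a nonzero Hopf ideal lies in the annihilator precisely when either $x$ acts trivially or some $g^d$ with $d<n$ acts as $\id$, so inner faithfulness is equivalent to $\eta\neq 0$ together with $g$ acting with order exactly $n$. Now well-definedness of the action already forces $G^n=\id$, whence $\alpha^n=1$; since the derivation step gives $\alpha$ equal to $\mu^{\pm 1}$ this reads $\mu^n=1$, i.e. $m\mid n$, which yields the ``only if'' direction. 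The eigenvalue ratio of $g$ equals $\lambda$, so $g^d=\id$ forces $\lambda^d=1$ and hence $n\mid d$; combined with $G^n=\id$ this makes the order of $g$ exactly $n$, so inner faithfulness reduces to the single condition $\eta\neq 0$.

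Conversely, given $m\mid n$ and a primitive $n$th root $\lambda$, I would exhibit the action in \eqref{list.daction} and verify the remaining Taft relation $x^n=0$ on all of $A$ (not merely on $V$). This is the one genuinely computational point: iterating the $g$-skew derivation on the basis $\{u^iv^j\}$ expresses $x^n$ through $\lambda$-binomial coefficients, and these vanish because $\lambda$ is a primitive $n$th root of unity. Finally, the excluded automorphism $\omega$ in the case $\mu=-1$ must be checked to yield no actions beyond those already listed, which follows by substituting $G=\omega\,(\text{diagonal})$ into $XG=\lambda GX$ and $X^2=0$ and observing that the resulting constraints again collapse to the two families above.
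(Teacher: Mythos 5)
Your treatment of the diagonal case tracks the paper's proof almost exactly: diagonalize $g$ using the known description of $\Aut(A)$, use $xg=\lambda gx$ with $\lambda\neq 1$ to kill the diagonal entries of $X=x|_V$, use $X^2=0$ to kill one off-diagonal entry, and then use the skew-Leibniz rule on the defining relation to pin $\alpha,\beta$ to the stated powers of $\mu$, with $g^n=1$ giving $m\mid n$. Your extra material on the Hopf ideals of $H_n(\lambda)$ amounts to the criterion ``inner faithful iff $x$ acts nontrivially,'' which the paper simply imports from \cite[Lemma 2.5]{KW}, and your insistence on verifying sufficiency ($x^n=0$ on all of $A$ via $\lambda$-binomials) is a reasonable addition that the paper leaves implicit. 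None of that is a problem.

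The genuine gap is your last paragraph, the non-diagonal case $\mu=-1$. You claim it is disposed of ``by substituting $G=\omega\,(\text{diagonal})$ into $XG=\lambda GX$ and $X^2=0$.'' Those two operator identities on $V$ do \emph{not} suffice. With $g(u)=\alpha v$, $g(v)=\beta u$ and $X=\left(\begin{smallmatrix}a_1 & b_1\\ a_2 & b_2\end{smallmatrix}\right)$, the relation $XG=\lambda GX$ yields $a_1=\lambda^2 a_1$ and $b_1=\lambda^2 b_1$ (up to units), so it forces $X=0$ only when $\lambda^2\neq 1$. In the critical case $n=2$, $\lambda=\mu=-1$ --- the only case in which non-diagonal automorphisms of $A$ exist \emph{and} $\lambda^2=1$ --- the identities $XG=\lambda GX$ and $X^2=0$ admit plenty of nonzero solutions (e.g.\ $a_1=b_1=1$, $a_2=b_2=-1$ with $\beta=\alpha$ satisfies both). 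To rule these out you must also impose $x(uv+vu-\kappa)=0$, which is exactly what the paper does: that condition gives $b_2=-\alpha\inv a_2$ and $b_1=-\beta a_1$, then $X^2=0$ forces all four parameters nonzero together with $a_2=\alpha a_1$ and $\alpha\beta=1$, and substituting back into the $uv$-coefficient of $x(uv+vu-\kappa)$ produces $4a_1=0$, a contradiction. Also note that your anticipated conclusion (``the constraints collapse to the two families above'') is the wrong one: since $g$ is assumed non-diagonal on $u,v$, the correct outcome is that \emph{no} inner faithful linear action exists in this case, not that it coincides with a listed one. As written, your argument leaves the case $n=2$, $\mu=-1$ unproved.
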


\begin{proof}
Note that $A = \kk\langle u,v \mid uv-\mu vu - \kappa\rangle$, 
where $\kappa \in \{0,1\}$. Because $g$ is grouplike, it acts as an automorphism on $A$.
We first assume that $g$ acts diagonally with respect to the given generators. 
That is, $g(u) = \alpha u$ and $g(v) = \beta v$ for some $\alpha, \beta \in \kk$ which are $n$th roots of $1$.
When $A = \wa$, we have the restriction that $\beta = \alpha \inv$.

Since $x$ acts linearly, its action on $\kk u + \kk v$ can be represented as a matrix, so, abusing notation, we write
\[ x = \begin{pmatrix}a_1 & b_1 \\ a_2 & b_2\end{pmatrix} \in M_2(\kk).\]
Since the action of $H_n(\lambda)$ is inner faithful, the matrix $x$ is nonzero \cite[Lemma 2.5]{KW}.  Additionally,
\begin{align}
\label{eq.xgrel}
0 = xg-\lambda gx = \begin{pmatrix} 
	a_1\alpha(1-\lambda)		& b_1(\beta-\lambda\alpha) \\
    a_2(\alpha-\lambda\beta)	& b_2\beta(1-\lambda) 
\end{pmatrix}.
\end{align}
Thus, $a_1=b_2=0$.
Now
\[ 0 = x^2 = \begin{pmatrix}b_1a_2 & 0 \\ 0 & b_1a_2 \end{pmatrix}\]
and hence, $a_2=0$ or $b_1=0$.

If $a_2 = 0$, then $x(u) = 0$ and $x(v) = b_1 u$. Furthermore,
\[ 0 = x(uv-\mu vu-\kappa) = (\alpha - \mu)b_1 u^2.\]
Thus, $\alpha = \mu$ and so by \eqref{eq.xgrel}, $\beta=\lambda\mu$. 
In the case of $\wa$, this implies $\lambda = \mu^{-2}$. Similarly, if $b_1 = 0$, then
$x(u) = a_2v$, $x(v) = 0$ and $(1-\mu\beta)a_2 v^2 = 0$ so $\beta = \mu \inv$ wherein $\alpha = \lambda\mu\inv$. In the case of $\wa$, this implies $\lambda = \mu^2$. In either case, to satisfy $g^n=1$, we must have $m \mid n$.

We now show that there are no faithful linear actions with $g$ acting non-diagonally on the given generators. 
Suppose that $\mu = -1$ and that there exists such an action.
Since $g$ is a non-diagonal automorphism, $g(u) = \alpha v$ and $g(v) = \beta u$ for some $\alpha, \beta \in \kk^\times$. 
As before, let $x(u) = a_1 u + a_2v$ and $x(v) = b_1 u + b_2v$. Then
\begin{align}
\notag		0 &= x(uv+vu-\kappa) \\
\label{eq.xrel}	&= (\alpha b_2 + a_2)v^2 + (\beta a_1 + b_1)u^2
        	+ (a_1+\beta a_2 - \alpha b_1 - b_2)uv
            + \kappa(\alpha b_1 + b_2).
\end{align}
Hence, $b_2 = - \alpha \inv a_2$ and $b_1 = - \beta a_1$. Then
\begin{align} \label{eq.xrel2}
0 	= x^2 
	= \begin{pmatrix}a_1^2 + a_2b_1 & b_1(a_1 + b_2) \\ a_2(a_1 + b_2) & a_2b_1 + b_2^2 \end{pmatrix}
    = \begin{pmatrix}a_1(a_1-\beta a_2) & \beta a_1 ( \alpha\inv a_2-a_1) \\
    	a_2(a_1-\alpha\inv a_2) & a_2(\alpha^{-2}a_2-\beta a_1)
    \end{pmatrix}.
\end{align}
Since $x \neq 0$, then it now follows from \eqref{eq.xrel} and \eqref{eq.xrel2} 
that all parameters are nonzero.
From (2.4) we have $a_2=\alpha a_1$ and $\alpha\beta=1$.
Combining this with our deductions for $b_1$ and $b_2$ and 
substituting into the $uv$ coefficient of \eqref{eq.xrel} yields
$4a_1=0$, a contradiction.
%
\end{proof}

\begin{remark} 
\label{rem.haction}
Since the definitions of $\kk_{\mu}[u,v]$ and $\wa$ are symmetric in $u$ and $v$ 
up to a scalar, we may assume without loss of generality that the $H_n(\lambda)$
action is given by \eqref{list.daction} above.
In addition, the condition $\lambda=\mu^{-2}$ in the case of $\wa$ forces $|\mu|=n$ and $n$ to be odd.
We assume this henceforth without comment.
\end{remark}

Denote the {\sf $\lambda$-binomial coefficient} by
\[ 
\qbinom{k}{i}_\lambda
	= \frac{(1-\lambda^k)(1-\lambda^{k-1})\cdots(1-\lambda^{k-i+1})}{(1-\lambda)(1-\lambda^2)\cdots(1-\lambda^i)},
\]
for $k \geq 1$ and $0 \leq i \leq k$.
For shorthand, we let
\[ [n]_\lambda = \frac{1-\lambda^n}{1-\lambda} = 1+\lambda+ \cdots + \lambda^{n-1}\]
and 
\[ [n]_\lambda ! 
	= [1]_\lambda [2]_\lambda \cdots [n-1]_\lambda [n]_\lambda 
	= 1 (1+\lambda) (1+\lambda + \lambda^2) \dots (1 + \lambda + \dots + \lambda^{n-1}).\]
By \cite[Lemma 7.3.1]{Rad},
\begin{align}
\label{eq.coprod}
\Delta(g^\ell x^k) 
	= \sum_{i=0}^k \qbinom{k}{i}_\lambda g^{\ell + i} x^{k-i} \tensor g^{\ell} x^i.
\end{align}

\begin{lemma}
\label{lem.fring}
Let $|\mu| = m > 1$ and let $A= \kk_{\mu}[u,v]$ or $A = \wa$.
Suppose $H=H_n(\lambda)$ acts linearly and inner faithfully on $A$.
\begin{enumerate}
\item $A^H = \kk[u^m,v^n]$.
\item $A\#H$ is prime if and only if $m=n$.
\end{enumerate}
\end{lemma}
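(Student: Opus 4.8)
The plan is to reduce everything to action \eqref{list.daction} of Remark \ref{rem.haction}, namely $g(u)=\mu u$, $g(v)=\lambda\mu v$, $x(u)=0$, $x(v)=\eta u$, and to write $x,g$ also for $1\#x,1\#g\in A\#H$.

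For \textbf{(1)} I would use $A^H=A^{\langle x\rangle}\cap A^{\langle g\rangle}$. Since $g$ is grouplike it acts diagonally on the PBW basis, $g(u^iv^j)=\mu^{i+j}\lambda^j u^iv^j$, so $A^{\langle g\rangle}$ is spanned by the monomials with $\mu^{i+j}\lambda^j=1$. As $x$ is a $(g,1)$-skew derivation, an induction from $x(ab)=g(a)x(b)+x(a)b$ and the defining relation gives $x(u^iv^j)=\eta\mu^i[j]_\lambda\,u^{i+1}v^{j-1}+(\text{lower order})$, where the lower-order terms are absent for $\qp$ and arise from the summand $1$ for $\wa$. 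Because $[j]_\lambda=0$ iff $n\mid j$, the leading term vanishes exactly when $n\mid j$; intersecting with $A^{\langle g\rangle}$ then forces $m\mid i$ and $n\mid j$, and since $u^m$ and $v^n$ commute ($\mu^{mn}=1$) this realizes $A^H$ as $\kk[u^m,v^n]$. The only delicate point is $\wa$: there I would run the argument on the total-degree filtration, whose associated graded is $\qp$, so $\gr A^H\subseteq(\gr A)^H=\kk[u^m,v^n]$ supplies the upper bound, while a direct verification that $x(v^n)=0$ gives $\kk[u^m,v^n]\subseteq A^H$, and a leading-term induction closes the gap.

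For \textbf{(2)} recall that $\wa$ forces $m=n$, so only $\qp$ carries content, and I would work through $\cnt(A\#H)$: a prime ring has a domain center, so it suffices to prove that $\cnt(A\#H)$ is a domain precisely when $m=n$. The crucial observation is that over the quantum torus $T=\kk_\mu[u^{\pm1},v^{\pm1}]$ the skew derivation $x$ becomes \emph{inner}: taking $c=\tfrac{\eta}{1-\lambda}uv^{-1}\in T$ one checks $x(r)=cr-g(r)c$, whence $Y:=(x-c)g^{-1}$ centralizes $T$, satisfies $gY=\lambda^{-1}Yg$, and has $Y^n=\xi u^nv^{-n}$ a central unit ($\xi\neq0$, via the $\lambda$-binomial expansion of $x^n=0$). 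When $m<n$, put $d=n/m\ge2$ and $\Theta:=v^mY^m$. Using the $\ZZ^2$-grading with $\deg u=(1,0),\deg v=(0,1),\deg g=(0,0),\deg x=(1,-1)$, one sees $\Theta=\lambda^{\binom{m}{2}}v^m(x-c)^m g^{\,n-m}$ is homogeneous of degree $(m,0)$ and has only nonnegative powers of $u,v$, so $\Theta\in A\#H$, hence $\Theta\in\cnt(A\#H)$; and $\Theta^{d}=v^nY^n=\xi u^n=\xi(u^m)^{d}$. Since $\Theta$ carries an $x^m$-term it is not a scalar multiple of $u^m$, so the relation $\prod_{\zeta^{d}=\xi}(\Theta-\zeta u^m)=0$ produces zero divisors in $\cnt(A\#H)$, and $A\#H$ is not prime.

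For the converse $m=n$, the X-inner hypothesis of Theorem \ref{thm.cnt} holds—$A^{\langle x\rangle}=\kk[u,v^n]$ is commutative and $g$ restricts to it with exact order $n$, so no nontrivial power of $g$ is X-inner there—giving $\cnt(A\#H)=\kk[u^n,v^n]$, a domain. It remains to upgrade ``domain center'' to primeness, which I would do by passing to the central localization $T\#H$ and showing it is simple: when $m=n$ the pair $(g,Y)$ with $gY=\lambda^{-1}Yg$, $g^n=1$ and $Y^n$ a central unit is a clock-and-shift pair generating a full matrix algebra, so $T\#H$ is Azumaya of PI-degree $n^2$ over $\operatorname{Frac}\kk[u^n,v^n]$, hence simple and prime, and therefore so is its order $A\#H$. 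The main obstacle is exactly this part \textbf{(2)}: spotting that $x$ is an inner skew derivation over $T$, which simultaneously yields the central element $\Theta$ that breaks primeness when $m<n$ and collapses the algebra to matrix form when $m=n$.
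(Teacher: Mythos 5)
Your part (1) is essentially the paper's proof: both arguments compute $A^{\langle g \rangle}$ from the diagonal action on the PBW basis and $A^{\langle x \rangle}$ from the skew--Leibniz induction giving $x(u^iv^j)=\eta\mu^i[j]_\lambda u^{i+1}v^{j-1}+(\text{lower order})$, then intersect. (For $\wa$ the paper carries the lower-order term explicitly through the induction using the identity $\qbinom{k}{1}_{q^2}+q\qbinom{k}{2}_q=\qbinom{k+1}{2}_q$ rather than your filtration argument; both work.) Part (2) is where you genuinely diverge: the paper simply cites Bergen's criterion \cite[Corollary 10]{B} --- $A\#H$ is prime iff some $0\neq a\in A^{\langle x\rangle}$ satisfies $g(a)=\lambda^{n-1}a$ --- and checks it on the monomials of $\kk[u,v^n]$ in two lines, uniformly for $\qp$ and $\wa$. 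Your self-contained route is correct for $\qp$ and more informative: the inner-skew-derivation observation is right, and your parenthetical is in fact a complete proof that $Y_0^n=-c^n\neq 0$, since $Y_0=x-c$ and $c$ genuinely $\lambda^{-1}$-commute, so the $q$-binomial theorem at a primitive $n$th root of unity collapses $0=x^n=(Y_0+c)^n$ to $Y_0^n+c^n$. The element $\Theta$ does produce central zero-divisors when $m<n$ (note you do not even need $\xi\neq 0$ there), and the Azumaya argument gives primeness of $\qp\#H$ when $m=n$. What your approach buys is an explicit witness to non-primeness and the PI-degree; what it costs is length and the need to actually verify $T\#H\cong T\otimes_{Z(T)}C_{T\#H}(T)$ over the fraction field of the center.

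There is, however, one genuine gap. You dismiss $A=\wa$ with ``only $\qp$ carries content,'' but for $\wa$ the lemma still asserts that $\wa\#H$ \emph{is} prime (the ``if'' direction, with $m=n$ forced), and your entire mechanism --- the localization to $T=\kk_\mu[u^{\pm 1},v^{\pm 1}]$ and the element $c=\tfrac{\eta}{1-\lambda}uv^{-1}$ --- is specific to the quantum plane; neither $u$ nor $v$ is even normal in $\wa$, so that localization is unavailable. The paper's appeal to Bergen covers both algebras at once. Your route can be repaired cheaply: filter $\wa\#H$ by total degree in $u,v$ with $g,x$ in degree $0$, observe $\gr(\wa\#H)\cong\qp\#H$, and use that an $\NN$-filtered algebra whose associated graded algebra is prime is itself prime. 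As written, though, the primeness of $\wa\#H$ is unproven.
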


\begin{proof}
(1) By Proposition \ref{prop.daction},
$m \mid n$ and so $\lambda^\ell = \mu$ for some $\ell \in \NN$.
Thus, $g(u^i v^j) = \mu^{i+j(\ell+1)} u^iv^j$. It follows that 
\[A^{\langle g \rangle} = \Span_\kk\{ u^iv^j : i+j(\ell+1) \equiv 0 \mod m\}.\]

Now we compute $A^{\langle x \rangle}$.
First suppose that $A = \qp$.
We claim inductively that $x(u^k) = 0$. For $k \geq 1$,
\begin{align*}
x(u^k) = x(u)u^{k-1} + g(u)x(u^{k-1}) = 0.
\end{align*}

Assume inductively for $k \geq 1$ we have
$x(v^k) = \eta [k]_\lambda uv^{k-1}$. Then
\begin{align*}
x(v^{k+1})
	&= x(v)v^k + g(v)x(v^k) \\
	&= \eta uv^k + (\lambda\mu v)(\eta [k]_\lambda uv^{k-1}) \\
	&= \eta \left( uv^k + \lambda [k]_\lambda uv^k \right) \\
	&= \eta [k+1]_\lambda uv^k.
\end{align*}
Thus, $x(v^n) = 0$. We also have
\begin{align}
\label{eq.xaction}
x(u^iv^j)
	= x(u^i)v^j + g(u^i)x(v^j)
    = (\mu^i u^i)(\eta [j]_\lambda uv^{j-1})
	= \mu^i \eta [j]_\lambda u^{i+1} v^{j-1}.
\end{align}
It follows that 
$A^{\langle x \rangle} = \Span_\kk\{ u^iv^j : j \equiv 0 \mod n \} = \kk[u,v^n]$
and so
\[ A^H = A^{\langle g \rangle} \cap A^{\langle x \rangle} 
	= \Span_\kk\{ u^iv^j : i \equiv 0 \mod m, j \equiv 0 \mod n \}
    = \kk[u^m,v^n]
\]

Next suppose $A=\wa$ and recall that $\lambda=\mu^{-2}$, so $m>2$.
An induction as above shows that $x(u^k)=0$ for all $k$. In addition,
\[ x(v^k) = \eta \left( [k]_\lambda uv^{k-1} - \lambda\qbinom{k}{2}_{\mu\inv} v^{k-2}\right).\]
The induction step is similar and requires use of the following identity,
\[ \qbinom{k}{1}_{q^2} + q\qbinom{k}{2}_q = \qbinom{k+1}{2}_q. \]

(2) By \cite[Corollary 10]{B}, $A\#H$ is prime if and only if there exists
$0 \neq a \in A^{\langle x \rangle}$ such that $g(a) = \lambda^{n-1} a$.
As $g$ is a linear automorphism, it suffices to check this condition
on monomials in $A^{\langle x \rangle}$.
We have $A^{\langle x \rangle} = \kk[u,v^n]$ by (1) and so
$g(u^i v^{kn}) = \mu^i (\lambda\mu)^{kn} u^i v^{kn} = \mu^i u^i v^{kn}$.
Now $\mu^i=\lambda^{n-1}$ if and only if $\mu^i = \lambda\inv$ for some $i$,
and such an $i$ exists if and only if $n \mid m$.
As we already have assumed $m \mid n$, then the result follows.
\end{proof}

Our primary interest will be in computing the discriminant of $A \# H_n(\lambda)$ over its center when $A=\qp$ or $A=\wa$.
We now study the center of the smash product.

Given an $H_n(\lambda)$-module algebra $R$, denote by 
$R(k) = \{ r \in R \mid g(r) = \lambda^k r \}$ the 
$\lambda^k$-weight space of the $g$-action on $R$.

\begin{lemma}
\label{lem.cnt1}
Let $H=H_n(\lambda)$ and let $R$ be an $H$-module algebra.
If $z \in \cnt(R\# H)$ with
\begin{align} \label{eq.zcnt}
z = \sum_{i=0}^{n-1} \sum_{j=0}^{n-1} r_{i,j} \# g^i x^j , \qquad r_{i,j} \in R,
\end{align}
then $r_{i,j} \in R(j)$ and
\begin{align}
\label{eq.cntx}
x(r_{i,j}) = \begin{cases}
	0	& j=0 \\
	(1-\lambda^{i+j-1}) r_{i,j-1} & j > 0.
\end{cases}
\end{align}
\end{lemma}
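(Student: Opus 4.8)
The plan is to exploit that a central $z$ commutes in particular with the two algebra generators $1\#g$ and $1\#x$ of $R\#H$ coming from $H$: the weight condition $r_{i,j}\in R(j)$ will fall out of the first commutation, and the recursion \eqref{eq.cntx} out of the second. Throughout I use that $R\#H = R\tensor H$ has $\{\,\#g^i x^j : 0\le i,j\le n-1\,\}$ as a basis over $R$, so two elements agree precisely when their $R$-coefficients match term by term. The only facts needed are the commutations $x^j g = \lambda^j g x^j$ and $x g^i = \lambda^i g^i x$ (both immediate from $xg=\lambda gx$), the coproducts $\Delta(g)=g\tensor g$ and $\Delta(x)=g\tensor x + x\tensor 1$, the counit identity $\sum\varepsilon(h_1)h_2 = h$, and the module-algebra rule $h\cdot 1 = \varepsilon(h)1$.

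First I would compute the two products of $z$ with $1\#g$. On one side, $\Delta(g)=g\tensor g$ gives
\[ (1\#g)(r_{i,j}\#g^i x^j) = g(r_{i,j})\#g^{i+1}x^j, \]
while on the other side $h\cdot 1 = \varepsilon(h)1$ together with $x^j g = \lambda^j g x^j$ gives
\[ (r_{i,j}\#g^i x^j)(1\#g) = \lambda^j r_{i,j}\#g^{i+1}x^j. \]
Right multiplication by $g$ permutes the basis (a cyclic shift in the power of $g$), so a fixed basis element $\#g^{i+1}x^j$ receives a contribution from exactly one index pair; comparing coefficients forces $g(r_{i,j})=\lambda^j r_{i,j}$, that is, $r_{i,j}\in R(j)$.

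Next I would repeat the calculation with $1\#x$ in place of $1\#g$. Using $\Delta(x)=g\tensor x + x\tensor 1$ together with $xg^i=\lambda^i g^i x$ and the weight condition just established (so $\lambda^i g(r_{i,j}) = \lambda^{i+j}r_{i,j}$),
\[ (1\#x)z = \sum_{i,j}\left(\lambda^{i+j}r_{i,j}\#g^i x^{j+1} + x(r_{i,j})\#g^i x^j\right), \]
whereas $h\cdot 1 = \varepsilon(h)1$ yields $z(1\#x)=\sum_{i,j}r_{i,j}\#g^i x^{j+1}$. Equating the coefficients of a fixed $\#g^i x^k$ gives $x(r_{i,0})=0$ at $k=0$, and for $1\le k\le n-1$ gives $\lambda^{i+k-1}r_{i,k-1}+x(r_{i,k})=r_{i,k-1}$, i.e. $x(r_{i,k})=(1-\lambda^{i+k-1})r_{i,k-1}$, which is exactly \eqref{eq.cntx} after renaming $k$ to $j$.

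The calculations are routine; the only step demanding care is this last coefficient comparison, where the shift $x^j\mapsto x^{j+1}$ produced by right multiplication by $x$ must be aligned against the unshifted $x(r_{i,j})$ terms, and one must remember that the $k=n$ contributions vanish since $x^n=0$ (so no spurious constraint arises there). Once both products are written out in the common basis, reading off \eqref{eq.cntx} is immediate.
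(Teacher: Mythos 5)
Your proof is correct and follows essentially the same route as the paper's: commuting $z$ with $1\#g$ yields the weight condition, and commuting with $1\#x$ (using that condition) yields the recursion \eqref{eq.cntx} by comparing coefficients in the $R$-basis $\{\#g^ix^j\}$. The only difference is cosmetic (you expand both products separately where the paper writes commutators), and your explicit derivation of $x(r_{i,0})=0$ at the $k=0$ step is actually cleaner than the paper's corresponding line.
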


\begin{proof}
Let $z \in \cnt(R\# H)$ as in \eqref{eq.zcnt}. Then
\begin{align*}
0 &= [ 1 \# g, z]
= \sum_{i=0}^{n-1} \sum_{j=0}^{n-1} [1\#g,r_{i,j}\#g^ix^j] \\
&= \sum_{i=0}^{n-1} \sum_{j=0}^{n-1} (g(r_{i,j}) \# g^{i+1} x^j - \lambda^j r_{i,j} \# g^{i+1} x^j).
\end{align*}
Thus $R_{i,j} \in R(j)$. Now
\begin{align*}
0 &= [ 1\#x, z ] = \sum_{i=0}^{n-1} \sum_{j=0}^{n-1} [1\#x,r_{i,j}\#g^ix^j] \\
&= \sum_{i=0}^{n-1} \sum_{j=0}^{n-1} 
	\left( \lambda^i g(r_{i,j}) \#g^ix^{j+1} + x(r_{i,j})\#g^ix^j -r_{i,j}\#g^ix^{j+1} \right).
\end{align*}
Therefore, $r_{i,0}=0$ and
\begin{align*}
\sum_{i=0}^{n-1} \sum_{j=1}^{n-1} x(r_{i,j}) \# g^i x^j &= \sum_{i=0}^{n-1} \sum_{j=1}^{n-1}
	\left( r_{i,j-1} - \lambda^i g(r_{i,j-1}) \right) \# g^i x^j \\
&= \sum_{i=0}^{n-1} \sum_{j=1}^{n-1} 
	\left(r_{i,j-1} - \lambda^{i+j-1} r_{i,j-1} \right) \# g^i x^j.
\end{align*}
Thus, \eqref{eq.cntx} follows.
\end{proof}


\begin{theorem} \label{thm.cnt}
Let $H=H_n(\lambda)$ and let $R$ be an $H$-module algebra that is a domain.
Suppose that the action of $H$ on $R$ is inner faithful and no nontrivial power of $g$ is 
$X$-inner when restricted to $R^{\langle x \rangle}$.
Then $\cnt(R\# H) = \cnt(R) \cap R^H$.
\end{theorem}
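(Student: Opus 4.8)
The plan is to prove the two inclusions separately, the containment $\cnt(R)\cap R^H\subseteq\cnt(R\#H)$ being routine and the reverse containment carrying all the content. For the easy direction, take $z\in\cnt(R)\cap R^H$ and check that $z\#1$ is central. Since $z\in\cnt(R)$ it commutes with every $r\#1$, and using $\Delta(g)=g\tensor g$, $\Delta(x)=g\tensor x+x\tensor 1$ together with $g(z)=z$ and $x(z)=0$ one computes directly that $z\#1$ commutes with $1\#g$ and with $1\#x$. As these elements generate $R\#H$, we get $z\#1\in\cnt(R\#H)$.

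For the hard direction, take $z\in\cnt(R\#H)$ and write it as in \eqref{eq.zcnt}. Lemma \ref{lem.cnt1} already records the consequences of $[1\#g,z]=0$ and $[1\#x,z]=0$, namely $r_{i,j}\in R(j)$ and the skew-derivation recursion \eqref{eq.cntx}; the remaining information is $[r\#1,z]=0$ for all $r\in R$. The first step is to expand this commutator using the coproduct \eqref{eq.coprod} and collect the coefficient of $\#g^ix^s$ for each $i$ and $s$. Since $(r\#1)z$ contributes only $r\,r_{i,s}$ in this slot while $z(r\#1)$ contributes $\sum_{j\ge s}\qbinom{j}{s}_\lambda r_{i,j}\,g^{i+s}(x^{j-s}(r))$, we obtain, for every $i,s$,
\[
r\,r_{i,s}=\sum_{j=s}^{n-1}\qbinom{j}{s}_\lambda r_{i,j}\,g^{i+s}\bigl(x^{j-s}(r)\bigr),\qquad r\in R.
\]
Restricting to $r\in R^{\langle x\rangle}$ kills every summand with $j>s$, leaving the intertwining relation $r\,r_{i,s}=r_{i,s}\,g^{i+s}(r)$ for all $r\in R^{\langle x\rangle}$. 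Thus any nonzero $r_{i,s}$ that lies in $R^{\langle x\rangle}$ realizes $g^{i+s}\restrict{R^{\langle x\rangle}}$ as an $X$-inner automorphism of $R^{\langle x\rangle}$, so the hypothesis forces $i+s\equiv0\pmod n$ for such a term.

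This disposes of the degree-zero part at once: by \eqref{eq.cntx} we have $x(r_{i,0})=0$, so $r_{i,0}\in R^{\langle x\rangle}$, and since $r_{i,0}\in R(0)=R^{\langle g\rangle}$ we in fact have $r_{i,0}\in R^H$; the intertwining relation then forces $r_{i,0}=0$ for $i\ne0$, while $r_{0,0}\in R^H$. The heart of the argument, and the step I expect to be the main obstacle, is to show that the positive $x$-degree coefficients all vanish. The difficulty is exactly that the intertwining relation yields a contradiction only when the relevant coefficient is already known to lie in $R^{\langle x\rangle}$, which is not automatic once $s\ge1$; the whole point of phrasing the hypothesis over $R^{\langle x\rangle}$ is that there $X$-innerness is easy to rule out (indeed, in the applications $R^{\langle x\rangle}$ is commutative), but to invoke it we must first certify membership in $R^{\langle x\rangle}$.

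I would handle this by downward induction on $s$, starting from the top nonzero $x$-degree $J$. At the top the displayed relation with $s=J$ reads $r\,r_{i,J}=r_{i,J}\,g^{i+J}(r)$ for all $r\in R$; applying the $(g,\id)$-skew derivation $x$ to this identity, using $g(r_{i,J})=\lambda^J r_{i,J}$ and $x(g^k(r))=\lambda^k g^k(x(r))$, and then substituting the same identity back in at the argument $x(r)$, produces a relation linking $x(r_{i,J})=(1-\lambda^{i+J-1})r_{i,J-1}$ to $r_{i,J}$. Combining this with \eqref{eq.cntx} evaluated at index $J+1$ (where $r_{i,J+1}=0$ forces $(1-\lambda^{i+J})r_{i,J}=0$, modulo the boundary case $J=n-1$ which is treated separately) should pin down $x(r_{i,J})=0$, i.e. $r_{i,J}\in R^{\langle x\rangle}$, after which the $X$-inner hypothesis kills $r_{i,J}$ unless $i+J\equiv0$; the exceptional residues are then removed using that $R$ is a domain and that $x\ne0$ by inner faithfulness. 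Descending through the recursion \eqref{eq.cntx}, which expresses each $r_{i,j-1}$ (up to a nonzero scalar) as $x(r_{i,j})$, eliminates the entire column and leaves $z=r_{0,0}\#1$. Finally, substituting $r_{i,s}=0$ for $s\ge1$ back into the displayed relation with $s=0$, $i=0$ gives $[r,r_{0,0}]=0$ for all $r\in R$, so $r_{0,0}\in\cnt(R)\cap R^H$, completing the proof.
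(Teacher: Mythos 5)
Your easy inclusion, your derivation of the relation $r\,r_{i,s}=\sum_{j\ge s}\qbinom{j}{s}_\lambda r_{i,j}\,g^{i+s}(x^{j-s}(r))$, and your elimination of the $j=0$ row all agree with the paper and are fine. The gap sits exactly where you flagged the "main obstacle": the elimination of the positive $x$-degree coefficients, and your top-down induction does not close. At the top degree $J\le n-2$, the identity $x(r_{i,J+1})=(1-\lambda^{i+J})r_{i,J}$ with $r_{i,J+1}=0$ gives only the dichotomy $r_{i,J}=0$ or $i+J\equiv 0\pmod{n}$, and for the surviving residue $i\equiv -J$ the relation $r\,r_{i,J}=r_{i,J}g^{i+J}(r)=r_{i,J}r$ merely says $r_{i,J}$ is central in $R$ --- no contradiction, and no membership in $R^{\langle x\rangle}$, since $x(r_{i,J})=(1-\lambda^{-1})r_{i,J-1}$ is controlled by the unknown coefficient one degree down. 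Carrying out the manipulation you describe (apply $x$ to $r\,r_{i,J}=r_{i,J}g^{i+J}(r)$, use $x(g^k(r))=\lambda^k g^k(x(r))$, and re-substitute) yields $g(r)x(r_{i,J})-x(r_{i,J})g^{i+J}(r)=(\lambda^{i+2J}-1)\,r_{i,J}\,g^{i+J}(x(r))$, which does not force $x(r_{i,J})=0$. The boundary case $J=n-1$ is worse: there $r\,r_{i,n-1}=r_{i,n-1}g^{i-1}(r)$ holds for all $r\in R$, but the hypothesis only excludes $X$-innerness of $g^{i-1}$ on $R^{\langle x\rangle}$ with witness \emph{in} $R^{\langle x\rangle}$, so nothing follows until you know $r_{i,n-1}\in R^{\langle x\rangle}$. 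In short, "the exceptional residues are then removed using that $R$ is a domain and that $x\ne 0$" is precisely the missing argument, not a routine cleanup.

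The paper goes bottom-up instead, and the one idea you are missing is the choice of test element: by inner faithfulness and nilpotence of $x$ there is $y\in R$ with $x(y)\ne 0$ and $x^2(y)=0$. For such $y$ your general relation truncates to the two-term recursion
$y\,r_{i,j}-r_{i,j}\,g^{i+j}(y)=[j+1]_\lambda\, r_{i,j+1}\, g^{i+j}(x(y))$ for $0\le j\le n-2$,
so $r_{i,j}=0$ forces $r_{i,j+1}=0$, because $[j+1]_\lambda\ne 0$, $g^{i+j}(x(y))\ne 0$, and $R$ is a domain. The base cases are exactly the coefficients that are automatically in $R^{\langle x\rangle}$: $r_{i,0}$ for $i\ne 0$ (which you have) and also $r_{0,1}$ (since $x(r_{0,1})=(1-\lambda^{0})r_{0,0}=0$), to which the $X$-inner hypothesis applies directly. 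Upward propagation then clears every column and leaves $z=r_{0,0}\#1$ with $r_{0,0}\in \cnt(R)\cap R^H$, as in your final step. If you insist on a top-down argument you would need a separate mechanism for the residue $i\equiv -J$ and for $J=n-1$; the upward recursion avoids both.
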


\begin{proof}
Let $z \in \cnt(R\# H)$ and write as in \eqref{eq.zcnt}.
By \eqref{eq.coprod} we have for $w \in R^{\langle x \rangle}$,
\begin{align*}
0 	&= [z,w\#1] = \sum_{i=0}^{n-1} \sum_{j=0}^{n-1} [r_{i,j} \# g^i x^j,w\#1] \\
	&= \sum_{i=0}^{n-1} \sum_{j=0}^{n-1} \left( r_{i,j} g^{i+j}(w) \# g^i x^j - wr_{i,j} \# g^i x^j \right) \\
	&= \sum_{i=0}^{n-1} \sum_{j=0}^{n-1} \left( r_{i,j} g^{i+j}(w) - w r_{i,j} \right) \# g^ix^j.
\end{align*}
Thus,
\begin{align}
\label{eq.cntu}
wr_{i,j} = r_{i,j}g^{i+j}(w).
\end{align}
By \eqref{eq.cntx}, $r_{i,0} \in R^{\langle x \rangle}$ for all $i$.
By hypothesis $g^i$ is not $X$-inner when $0 < i < n$ and so there exists 
$w \in R^{\langle x \rangle}$ not satisfying \eqref{eq.cntu} for $r_{i,0}$. 
Because $R$ is a domain, we must have $r_{i,0}=0$ when $0 < i < n$.
A similar argument shows $r_{0,1}=0$.

By the inner faithful hypothesis and because $x$ is nilpotent, 
there exists $y \in R$ such that $x(y) \neq 0$ and $x^k(y)=0$ for $k>1$. Thus
\begin{align*}
0 &= [z,y\#1] = \sum_{i=0}^{n-1} \sum_{j=0}^{n-1} [r_{i,j} \# g^i x^j,y\#1] \\
&= \sum_{i=0}^{n-1} \sum_{j=0}^{n-1} \left( 
	r_{i,j} g^{i+j}(y) \# g^ix^j + r_{i,j}[j]_\lambda g^{i+j-1}x(y) \# g^ix^{j-1} - yr_{i,j}\# g^i x^j \right)
\end{align*}
where $x\inv=0$.
Thus, $yr_{i,n-1} - r_{i,n-1}g^{i-1}(y) = 0$ and
\[
\sum_{i=0}^{n-1} \sum_{j=0}^{n-2} yr_{i,j} - r_{i,j} g^{i+j}(y) \# g^i x^j
= \sum_{i=0}^{n-1} \sum_{j=1}^{n-1} [j]_\lambda r_{i,j} g^{i+j-1}x(y) \# g^i x^{j-1}.
\]
Renumbering the right hand side, we get
\[
\sum_{i=0}^{n-1} \sum_{j=0}^{n-2} yr_{i,j} - r_{i,j} g^{i+j}(y) \# g^i x^j
= \sum_{i=0}^{n-1} \sum_{j=0}^{n-2} [j+1]_\lambda r_{i,j+1} g^{i+j}x(y) \# g^i x^j.
\]
This gives our final relation
\begin{align}
\label{eq.cntv2}
yr_{i,j} - r_{i,j} g^{i+j}(y) = [j+1]_\lambda r_{i,j+1} g^{i+j}x(y) \qquad\text{for } 0 \leq j \leq n-2.
\end{align}

Suppose $r_{0,k}=0$ for some $k\geq 1$.
Since $|\lambda|=n$, $g^{i+j}x(y) \neq 0$ by hypothesis.
As $R$ is a domain, then by \eqref{eq.cntv2}
we have $r_{0,k+1}=0$.
Above we showed $r_{0,1}=0$ and thus it follows
from induction that $r_{0,k}=0$ when $1 \leq k < n$.
Since $r_{i,0}=0$ when $0 < i < n$,
then a similar argument shows that
$r_{i,j}=0$ when $0 < i < n$ and $0 \leq j < n$.
Thus, $\cnt(R\# H) \subset R$ and the result follows.
\end{proof}

\begin{corollary}
\label{cor.qpcnt}
Let $A=\kk_{\mu}[u,v]$ or $A=\wa$. 
Suppose $H=H_n(\lambda)$ acts linearly and inner faithfully on $A$.
If $|\mu|=n>1$, then $\cnt(A \# H) = A^H = \kk[u^n,v^n]$.
\end{corollary}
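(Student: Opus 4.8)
The plan is to obtain the result as a direct consequence of Theorem~\ref{thm.cnt} together with Lemma~\ref{lem.fring}, with essentially all of the work going into the verification of the $X$-inner hypothesis. Both $A=\qp$ and $A=\wa$ are iterated Ore extensions and hence domains, and the action is inner faithful by assumption, so $R=A$ satisfies the first two hypotheses of Theorem~\ref{thm.cnt}. Since $|\mu|=n$, applying Lemma~\ref{lem.fring}(1) with $m=n$ gives $A^H=\kk[u^n,v^n]$, which already establishes the second claimed equality.

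The crux is to show that no nontrivial power of $g$ is $X$-inner when restricted to $A^{\langle x\rangle}$. From the computation in the proof of Lemma~\ref{lem.fring}(1) we have $A^{\langle x\rangle}=\kk[u,v^n]$, and I would first observe that this subalgebra is in fact \emph{commutative}: using $\mu^n=1$ one finds $uv^n=\mu^n v^n u=v^n u$ in the quantum plane, while in the quantum Weyl algebra $uv^n=\mu^n v^n u+[n]_\mu v^{n-1}=v^n u$, because $\mu^n=1$ forces $[n]_\mu=(1-\mu^n)/(1-\mu)=0$. Thus $A^{\langle x\rangle}$ is a commutative polynomial domain in the algebraically independent elements $u$ and $v^n$. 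Next, $g$ restricts to an automorphism of $A^{\langle x\rangle}$ (since $xg=\lambda gx$ yields $x(g(r))=\lambda g(x(r))$, so $g$ preserves $A^{\langle x\rangle}$), acting by $g(u)=\mu u$ and $g(v^n)=(\lambda\mu)^n v^n=v^n$. For $0<t<n$ we have $g^t(u)=\mu^t u\neq u$ because $|\mu|=n$, so $g^t\neq\id$ on $A^{\langle x\rangle}$. On a commutative domain any $X$-inner automorphism must be the identity: if $0\neq a$ satisfies $ra=a\,g^t(r)$ for all $r$, then commutativity gives $a(r-g^t(r))=0$, and the domain hypothesis forces $g^t=\id$. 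Hence no nontrivial power of $g$ is $X$-inner on $A^{\langle x\rangle}$.

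With every hypothesis in place, Theorem~\ref{thm.cnt} gives $\cnt(A\#H)=\cnt(A)\cap A^H$. To conclude I would verify that $A^H=\kk[u^n,v^n]$ is central in $A$, so that $\cnt(A)\cap A^H=A^H$. This is the same short computation: $v^n$ commutes with $u$ (shown above) and trivially with $v$; symmetrically, $u^n v=\mu^n v u^n=v u^n$ in the quantum plane, and $v u^n=u^n v-\mu^{-1}[n]_{\mu^{-1}}u^{n-1}=u^n v$ in the quantum Weyl algebra, so $u^n$ is central as well. Therefore $\cnt(A\#H)=A^H=\kk[u^n,v^n]$.

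The only potential obstacle is the $X$-inner verification, but the key observation---that $A^{\langle x\rangle}$ degenerates to a commutative ring precisely because $\mu^n=1$---reduces it to the trivial fact that a nonidentity automorphism of a commutative domain cannot be $X$-inner, so no genuinely difficult step remains.
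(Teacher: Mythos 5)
Your proof is correct and follows the same route as the paper: verify the hypotheses of Theorem~\ref{thm.cnt} using $A^{\langle x\rangle}=\kk[u,v^n]$ from Lemma~\ref{lem.fring} and then invoke Lemma~\ref{lem.fring}(1). The paper's proof is terser---it only notes that no nontrivial power of $g$ fixes $u$---and you have simply made explicit the two facts it leaves implicit, namely that $A^{\langle x\rangle}$ is commutative (so a nonidentity automorphism cannot be $X$-inner there) and that $\kk[u^n,v^n]\subseteq\cnt(A)$.
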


\begin{proof}
By the proof of Lemma \ref{lem.fring}, $A^{\langle x \rangle}=\kk[u,v^n]$. 
As no non-identity power of $g$ acts trivially on $u$,
then we may apply Theorem \ref{thm.cnt}.
The result now follows from Lemma \ref{lem.fring} (1).
\end{proof}

\begin{remark}
If we loosen the hypotheses in Corollary \ref{cor.qpcnt}
then it may no longer hold. For example, consider $H_2(-1)$
acting on the commutative polynomial ring $A=\kk[u,v]$ with action
given by Remark \ref{rem.haction}.
Then the element $u\#g + 2v\#gx$ is central.
However, this does not violate the conditions of Theorem \ref{thm.cnt}
because $g$ is inner on $A^{\langle x \rangle} = \kk[u,v^n]$.
\end{remark}

\begin{question}
Let $A=\qp$.
Assume the hypotheses of Corollary \ref{cor.qpcnt} but assume
$|\mu|>1$ properly divides $|\lambda|$.
Is it still true that $\cnt(A\#H_n(\lambda)) \subset A$?
\end{question}

Although our interest is primarily in the algebras $\wa$ and $\qp$, the following two examples show that our techniques can be used to determine the centers 
in other cases as well.

\begin{example}
Let $\lambda$ and $\mu$ be primitive third roots of unity.
Define a quantum affine $3$-space $A$ on generators $u,v,w$ subject to the relations
\[ uv=\mu vu, \quad vw=\lambda\mu wv, \quad wu=\mu uw.\]
There is an action of $H_3(\lambda)$ on $A$ given by
\begin{align*}
&g(u)=\mu u, & &g(v) = \lambda\mu v, & &g(w) = \lambda^2\mu w, \\
&x(u) = 0, & &x(v) = u, & &x(w) = v.
\end{align*}
The details are left to the reader.
Of course, \eqref{eq.xaction} still holds and extends in an obvious way to
\[
x(u^iv^jw^k) = \lambda^j \mu^{i+j} [k]_\lambda u^iv^{j+1}w^{k-1}
	+ \mu^i [j]_\lambda u^{i+1}v^{j-1}w^k.
\]
Thus, we have $A^{\langle x \rangle} = \kk[u,v^3,w^3]$ and 
hence $A^{H_3(\lambda)}=\kk[u^3,v^3,w^3]$.
Thus, by Theorem \ref{thm.cnt}, $\cnt(A\#H)=A^{H_3(\lambda)}$.
\end{example}

\begin{example}
Let $\mu$ be a primitive $n$th root of unity, $n$ odd and $n > 2$.
The quantum coordinate ring of $2 \times 2$ matrices $\mq$ 
is generated by $a,b,c,d$ subject to the relations
\begin{align*}
	&ab={\mu}ba,	&	&bd={\mu}db,	&	&bc=cb,			\\
	&ac={\mu}ca,	&	&cd={\mu}dc,	&	&ad-da=(\mu-\mu\inv)bc.
\end{align*}
Let $H=H_n(\lambda)$ and $\lambda=\mu^{-2}$,
whence there is an action of $H$ on $\mq$ given by
\begin{align*}
	&g(a)={\mu}a,	&	&g(b)={\mu}b,	&	&g(c)={\mu\inv}c,	&	&g(d)={\mu\inv}d, \\
	&x(a)=0,	&	&x(b)=0,	&	&x(c)=a,	&			&x(d)=b.
\end{align*}
The details are left to the reader.

As in the case of $\wa$, we assume $|\mu|=n > 1$ with $n$ odd.
We have $\mq^{\langle x \rangle}=\kk[a,b,c^n,d^n]$ and
$\mq^H = \kk[a^i b^j,c^n,d^n : i+j \equiv 0 \mod n]$. 
To show this, we apply induction as in Lemma \ref{lem.fring}. 
Let $[k]=[k]_{\mu^{-2}}$. 
We have $x(c^k)=[k]ac^{k-1}$ and $x(d^\ell)=[\ell]bd^{\ell-1}$.
Thus, $x(c^k)=0$ and $x(d^\ell)=0$ if and only if $k,\ell \equiv 0 \mod n$.
Additionally, $g(c^n)=c^n$ and $g(d^n)=d^n$, whence $c^n,d^n\in \mq^H$.
As $x(a)=x(b)=0$ and $g(a^ib^j)=\mu^{i+j}a^ib^j$, 
we have that $a^ib^j\in \mq^H$ whenever $i+j\equiv 0 \mod n$. 
It follows that
\begin{align*}
x(a^ib^jc^kd^\ell)&=x(a^ib^jc^k)d^\ell+g(a^ib^jc^k)x(d^\ell) \\
	&= g(a^ib^j)x(c^k)d^\ell+g(a^ib^jc^k)d^\ell \\
	&= \mu^{i+j}a^ib^j\left([k]ac^{k-1}d+\mu^{-k}[\ell]c^kbd^{\ell-1}\right).
\end{align*}
By the PBW theorem for quantum matrices \cite{BG}, 
$x(a^ib^jc^kd^l)$ is nonzero if either $k$ or $\ell$ is not a multiple of $n$.
Therefore, $\mq^{\langle x \rangle}$ is generated by
$\{a,b,c^n,d^n\}$ and so $\mq^H$ is generated by
$\{c^n,d^n,a^ib^j|\,i+j=n\ \text{and}\ i,j\geq0\}$.

The action of $H$ on $\mq$ is inner faithful
and as no non-trivial power of $g$ is $X$-inner on $\mq^{\langle x \rangle}$, 
then we may apply Theorem \ref{thm.cnt} to obtain that $\cnt(\mq\# H) = \mq^H$.
\end{example}

\section{Induced Poisson structures and discriminant computations}
\label{sec.disc}

Let $A$ be an algebra finitely generated and free of rank $\omega$ over 
a central subalgebra $C \subseteq \cnt(A)$.
The {\sf regular trace} is defined as the composition
$\tr_{\reg}:A \xrightarrow{\lm} M_n(C) \xrightarrow{\tr_\int} C$
where $\lm$ denotes left multiplication 
and $\tr_\int$ the usual (internal) trace.
Throughout, we use the notation $\tr$ to denote $\tr_{\reg}$.
If $Z:=\{z_i\}_{i=1}^\omega$ is a (finite) basis of $A$ over $C$,
then the {\sf discriminant of $A$ over $C$} is defined to be
\begin{align}
\label{eq.disc}
d(A/C) =_{C^\times} \det(\tr(z_iz_j))_{\omega \times \omega} \in C.
\end{align}
There is an intimate connection between discriminants and 
the Poisson structures induced on centers of specializations \cite{NTY}.

Let $R$ be a $\kk[q^{\pm 1}]$-algebra.
The {\sf specialization} $R_\epsilon$ of $R$ at $\epsilon \in \kk^\times$ 
is defined as $\Rep := R/(q-\epsilon)$ \cite{BG2}.
The canonical projection $\sigma:R \rightarrow \Rep$ induces a Poisson
structure on $\cnt(R_\epsilon)$ via
\[ 
\{ \sigma(x_i),\sigma(x_j) \} 
	= \sigma\left( \frac{x_ix_j-x_jx_i}{q-\epsilon} \right),
    \quad x_i, x_j \in \sigma\inv(\cnt(R_\epsilon)).
\]

Throughout, we assume all Poisson algebras are commutative.
We will be interested in the induced Poisson structure on the center
of a certain Ore extension related to the smash product $A\#H_n(\lambda)$
when $A=\qp$ or $A=\wa$.
Our primary goal in this section is to compute the discriminant of that Ore 
extension using the Poisson techniques developed in \cite{NTY}.
We then extend this to determine the discriminant of the smash product itself.

We begin by giving background on Poisson Ore extensions \cite{OH1,OH2}.
Let $B$ be a Poisson algebra with Poisson bracket $\{\,,\}_B$.
A derivation $\alpha$ of $B$ is a {\sf Poisson derivation} provided 
\begin{align}
\label{eq.aderiv}
	\alpha(\{a,b\}_B) = \{\alpha(a),b\}_B + \{a,\alpha(b)\}_B \quad
    \text{for all } a,b \in B.
\end{align}
A derivation $\beta$ of $B$ is an $\alpha$-derivation 
(where $\alpha$ is a Poisson derivation) provided
\begin{align}
\label{eq.bderiv}
	\beta(\{a,b\}_B) - \{\beta(a),b\}_B - \{a,\beta(b)\}_B
	= \beta(a)\alpha(b) - \alpha(a)\beta(b) \quad
    \text{for all } a,b \in B.
\end{align}
Given a Poisson derivation $\alpha$ and an $\alpha$-derivation $\beta$,
the {\sf Poisson Ore extension} $B[z;\alpha,\beta]_P$
is the polynomial ring $B[z]$ with Poisson bracket
\[ 
\{a,b\} = \{a,b\}_B, \qquad
\{z,a\} = \alpha(a)z+\beta(a) \quad \text{for all } a,b \in B.
\]
The Poisson derivation $\alpha$ is {\sf inner}
if there exists a unit $a \in B$ such that $\alpha(b) = a\inv\{b,a\}$ for all $b \in B$.
Furthermore, the $\alpha$-derivation $\beta$ is {\sf $\alpha$-inner} if there exists $d \in B$ 
such that $\beta(b)=d\alpha(b)+\{b,d\}$ for all $b \in B$.
An element $y$ in a Poisson algebra $B$ is said to be {\sf Poisson normal} 
if $\{y,b\} \in yB$ for all $b \in B$.
If $y \in B$ is Poisson normal and $(y)$ is a prime ideal in $B$,
then $y$ is said to be {\sf Poisson prime}.

Let $A$ be an algebra and $q \in A$ a nonzero divisor.
The pair $(\tau,\delta)$ where $\tau \in \Aut(A)$ and
$\delta$ is a $\tau$-derivation of $A$ is said to be a 
{\sf $q$-skew extension} of $A$ provided 
$\tau(q)=q$, $\delta(q)=0$, and $\tau\delta=q\delta\tau$ \cite[Section 6]{G}.
The following lemma is implied directly by \cite[Theorem 1.1]{OH1}.

\begin{lemma}
\label{lem.indp2}
Let $B[z]$ be a Poisson algebra such that $\{z,a\} \in Bz + B$ for all $a \in B$. 
Then $z$ induces a Poisson derivation $\alpha$
and an $\alpha$-derivation $\beta$ satisfying \eqref{eq.bderiv} such that
$\{z,b\} = \alpha(b)z+\beta(b)$ for all $b \in B$.
\end{lemma}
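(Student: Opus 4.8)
The plan is to read $\alpha$ and $\beta$ off the bracket and then extract the two required identities by comparing powers of $z$. Since $B[z]$ is free as a $B$-module on $\{1,z,z^{2},\dots\}$, the hypothesis $\{z,a\}\in Bz+B$ means that for each $a\in B$ there are \emph{unique} elements $\alpha(a),\beta(a)\in B$ with $\{z,a\}=\alpha(a)z+\beta(a)$; this defines maps $\alpha,\beta\colon B\to B$, and bilinearity of the bracket makes both $\kk$-linear. Throughout I use that $B$ is a Poisson subalgebra of $B[z]$ (the standing hypothesis of the ambient setup of \cite[Theorem~1.1]{OH1}), so that $\{a,b\}\in B$ whenever $a,b\in B$ and the bracket $\{\,,\}_B$ appearing in \eqref{eq.aderiv} and \eqref{eq.bderiv} is the restriction of the bracket on $B[z]$.

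First I would check that $\alpha$ and $\beta$ are derivations of the commutative algebra $B$. The Leibniz rule gives $\{z,ab\}=\{z,a\}b+a\{z,b\}=(\alpha(a)b+a\alpha(b))z+(\beta(a)b+a\beta(b))$, while by definition $\{z,ab\}=\alpha(ab)z+\beta(ab)$. Comparing the coefficients of $z$ and of $1$ (here $z$ commutes with $B$ and $B$ is commutative) yields $\alpha(ab)=\alpha(a)b+a\alpha(b)$ and $\beta(ab)=\beta(a)b+a\beta(b)$. This part is routine.

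The substance is to verify \eqref{eq.aderiv} and \eqref{eq.bderiv}, and for this I would apply the Jacobi identity to the triple $(z,a,b)$ with $a,b\in B$:
\[
\{z,\{a,b\}\}+\{a,\{b,z\}\}+\{b,\{z,a\}\}=0.
\]
Expanding the nested brackets with the Leibniz rule and antisymmetry $\{a,z\}=-\{z,a\}$, each of the three terms becomes a polynomial in $z$ of degree at most one with coefficients in $B$; for example $\{a,\{b,z\}\}=-\{a,\alpha(b)\}_B\,z+\alpha(a)\alpha(b)\,z+\alpha(b)\beta(a)-\{a,\beta(b)\}_B$, and the term $\{b,\{z,a\}\}$ is handled symmetrically. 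The two terms $\pm\alpha(a)\alpha(b)z$ cancel, and setting the total coefficient of $z$ to zero returns \eqref{eq.aderiv}, so $\alpha$ is a Poisson derivation; setting the total constant ($z^{0}$) coefficient to zero returns \eqref{eq.bderiv}, so $\beta$ is an $\alpha$-derivation. Combined with the derivation property from the previous step, this proves the lemma.

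I expect the Jacobi-identity bookkeeping to be the only genuine obstacle. One must expand $\{a,\{b,z\}\}$ and $\{b,\{z,a\}\}$ carefully, track the signs introduced by antisymmetry, and keep the $z$- and constant-parts separate; the legitimacy of this coefficient comparison rests on $B[z]$ being free over $B$ on powers of $z$, and the fact that all intermediate brackets of elements of $B$ remain in $B$ rests on $B$ being a Poisson subalgebra. Everything else is bilinearity and the Leibniz rule.
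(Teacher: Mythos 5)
Your argument is correct, but it takes a different (more self-contained) route than the paper: the paper offers no computation at all for this lemma and simply cites \cite[Theorem 1.1]{OH1}, whereas you re-derive that theorem from scratch by reading $\alpha,\beta$ off the bracket via freeness of $B[z]$ over $B$, extracting the derivation property from the Leibniz rule on $\{z,ab\}$, and extracting \eqref{eq.aderiv} and \eqref{eq.bderiv} by comparing $z$- and constant coefficients in the Jacobi identity for $(z,a,b)$. What your approach buys is transparency and independence from the reference; what it costs is that you must carry the bookkeeping yourself, and you correctly identify the one genuinely needed standing assumption, namely that $B$ is a Poisson subalgebra so that all nested brackets of elements of $B$ stay in $B$ (without this the statement of \eqref{eq.aderiv} does not even parse, so it is implicit in the lemma). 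One caveat on the final step: carrying out your expansion to the end, the constant-term identity you obtain is
\[
\beta(\{a,b\}_B)-\{\beta(a),b\}_B-\{a,\beta(b)\}_B=\alpha(a)\beta(b)-\beta(a)\alpha(b),
\]
whose right-hand side is the \emph{negative} of the right-hand side of \eqref{eq.bderiv} as printed. This is not an error in your computation: testing \eqref{eq.bderiv} as printed against the bracket of Proposition \ref{prop.coeffs} (with $a=z_1$, $b=z_2$, where $\beta(z_1)=0$, $\beta(z_2)=c_2z_1$) gives $b_1c_2z_1^2$ on the left and $-b_1c_2z_1^2$ on the right, so the paper's \eqref{eq.bderiv} carries a sign typo relative to its own convention $\{z,a\}=\alpha(a)z+\beta(a)$. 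You should therefore not assert that your constant term ``returns \eqref{eq.bderiv}'' verbatim, but rather state the identity you actually derive and note that it is the correct $\alpha$-derivation condition for this convention; with that adjustment the proof is complete.
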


\begin{proposition}
\label{prop.indp2}
Let $A$ be an $\kq$-algebra and
$(\tau,\delta)$ a $q$-skew extension of $A$.
\begin{enumerate}
\item The Ore extension $A[t;\tau,\delta]$ is a $\kq$-algebra and for $\epsilon \in \kk^\times$, 
\[(A[t;\tau,\delta])_\epsilon = A_\epsilon[t;\tau,\delta].\]

\item Suppose $\Cep=\Bep[t^m]$ where $\Bep$ is a central subalgebra of
$\Aep$ and $m$ is the order of $\left.\tau\right|_{\Bep}$.
Then the induced Poisson structure on $\Cep$
is a Poisson Ore extension of the induced structure on $\Bep$.
In particular, for $b,b' \in \Bep$ we have
$\{b,b'\}_{\Cep}=\{b,b'\}_{\Bep}$ and $\{z,b\} = \alpha(b)z+\beta(b)$ where 
\[
\alpha(b)=\sigma\left( \frac{\tau^m(a)-a}{q-\epsilon} \right)
\quad\text{and}\quad
\beta(b)=\sigma\left( \frac{\delta^m(a)}{q-\epsilon}\right)
\quad\text{for } a \in \sigma\inv(b).\]
\end{enumerate}
\end{proposition}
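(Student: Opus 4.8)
The plan is to prove the two parts in sequence, using Lemma \ref{lem.indp2} to package the abstract Poisson-extension structure once the relevant bracket has been computed. For part (1), the first step is to verify that the $q$-skew conditions $\tau(q)=q$ and $\delta(q)=0$ make $q$ central in the Ore extension $A[t;\tau,\delta]$: since $q$ is central in $A$, the only relation to check is $tq = \tau(q)t + \delta(q) = qt$, so $q$ remains central and $A[t;\tau,\delta]$ is genuinely a $\kq$-algebra. The identification $(A[t;\tau,\delta])_\epsilon = A_\epsilon[t;\tau,\delta]$ then follows by observing that quotienting by $(q-\epsilon)$ commutes with forming the Ore extension; concretely, $\tau$ and $\delta$ descend to $A_\epsilon$ (because $q-\epsilon$ is $\tau$-stable and $\delta$-stable), and the universal property of the Ore extension on each side gives mutually inverse maps. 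This part I expect to be essentially formal.

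For part (2), the strategy is to compute the induced bracket $\{z,b\}$ for a lift $z \in \sigma^{-1}(t^m)$ and $b \in \Bep$, then invoke Lemma \ref{lem.indp2} to extract $\alpha$ and $\beta$. The key computation is the commutator $[t^m, a]$ inside $A[t;\tau,\delta]$ for $a \in \sigma^{-1}(b)$. Here I would use the $q$-skew hypothesis $\tau\delta = q\,\delta\tau$, which controls how $\tau$ and $\delta$ interact and lets one write $t^m a$ in the form $\tau^m(a)\,t^m + (\text{lower-order terms in } t)$. Because $m$ is the order of $\tau|_{\Bep}$ and $t^m$ is assumed central enough that $\Cep = \Bep[t^m]$, the leading coefficient produces the $\tau^m(a)-a$ term and the constant-in-$t^m$ coefficient produces the $\delta^m(a)$ term. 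Dividing by $q-\epsilon$ and applying $\sigma$ yields exactly the claimed formulas
\[
\alpha(b)=\sigma\!\left(\frac{\tau^m(a)-a}{q-\epsilon}\right),
\qquad
\beta(b)=\sigma\!\left(\frac{\delta^m(a)}{q-\epsilon}\right),
\]
matching $\{z,b\}=\alpha(b)z+\beta(b)$; the bracket $\{b,b'\}_{\Cep}=\{b,b'\}_{\Bep}$ is immediate since the defining difference quotient for $b,b' \in \Bep$ already lives over $\Bep$.

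The main obstacle is the intermediate-coefficient bookkeeping in expanding $[t^m,a]$: in general $t^m a$ expands into a sum $\sum_{k=0}^{m} c_k t^k$ whose coefficients $c_k$ are nested compositions of $\tau$'s and $\delta$'s, and one must show that, modulo $(q-\epsilon)$, the only surviving contributions to a bracket landing in $\Bep z + \Bep$ are the $t^m$-coefficient $\tau^m(a)$ and the $t^0$-coefficient $\delta^m(a)$. This is where the $q$-skew relation $\tau\delta=q\,\delta\tau$ does the real work: it forces the mixed terms to carry factors that either vanish under $\sigma$ or collect into the stated normal form. Once the leading and constant coefficients are isolated and well-definedness (independence of the lift $a$) is checked, the hypothesis $\{z,a\}\in \Bep z + \Bep$ needed to apply Lemma \ref{lem.indp2} is verified, and that lemma guarantees $\alpha$ is a Poisson derivation and $\beta$ an $\alpha$-derivation, completing the proof that $\Cep$ is a Poisson Ore extension of $\Bep$.
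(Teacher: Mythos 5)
Your overall strategy coincides with the paper's: part (1) is treated as formal, and part (2) is proved by expanding $t^m a$ inside $A[t;\tau,\delta]$, isolating the $t^m$- and $t^0$-coefficients, and invoking Lemma \ref{lem.indp2}. The one genuine gap is precisely the step you flag as the ``main obstacle'': the disposal of the intermediate terms. You claim the $q$-skew relation $\tau\delta=q\delta\tau$ ``does the real work'' by forcing the mixed terms to carry factors that vanish under $\sigma$. That mechanism does not work as stated. The $q$-Leibniz rule gives
\[
t^m a=\sum_{i=0}^m \qbinom{m}{i}_q\,\tau^{m-i}\delta^i(a)\,t^{m-i},
\]
and the middle coefficients $\qbinom{m}{i}_q$ are nonzero in $\kq$; even when $\epsilon$ is a primitive $m$th root of unity, so that $\qbinom{m}{i}_\epsilon=0$ for $0<i<m$, they vanish only to first order in $q-\epsilon$, and that order of vanishing is exactly consumed by the division by $q-\epsilon$ in the definition of the induced bracket. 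So no manipulation of $\tau\delta=q\delta\tau$ alone eliminates these terms.

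The paper's resolution is structural rather than computational: since $\Cep=\Bep[t^m]$ is the algebra carrying the induced Poisson structure, the bracket $\{z,b\}$ must lie in $\Bep[t^m]$, while $\Rep=\Aep[t;\tau,\delta]$ is free over $\Aep$ with basis $\{t^j\}$; comparing coefficients of $t^{m-i}$ for $0<i<m$ therefore forces those coefficients to vanish after applying $\sigma$, with no computation needed. (The surviving $i=0$ and $i=m$ coefficients are individually divisible by $q-\epsilon$ because $\tau^m$ fixes $\Bep$ and the full commutator maps to zero in $\Rep$, so the stated formulas for $\alpha$ and $\beta$ are well defined.) With your proposed mechanism replaced by this observation, the remainder of your outline --- the identity $\{b,b'\}_{\Cep}=\{b,b'\}_{\Bep}$ and the appeal to Lemma \ref{lem.indp2} --- matches the paper's proof.
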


\begin{proof}
(1) This is clear.

(2) Set $z=\sigma(t^m)$. Choose $b \in \Bep$ and let $a \in \sigma\inv(b)$.
\begin{align*}
\{z,b\} &= \{\sigma(t^m),\sigma(a)\} \\
    	&= \sigma\left( \frac{t^m a - at^m}{q-\epsilon} \right) \\
	& = \sigma\left( \frac{-at^m + \sum_{i=0}^m
        \left[\begin{smallmatrix}m \\ i\end{smallmatrix}\right]_q
        \tau^{m-i}\delta^i(a)t^{m-i} }{q-\epsilon}	\right).
\end{align*}
By our assumption on $\Cep$, all coefficients in the summation are zero
except when $i=0$ or $i=m$. Thus,
\[
\{z,b\}
	= \sigma\left( \frac{(\tau^m(a)-a)t^m + \delta^m(a)}{q-\epsilon} \right)
    = \sigma\left(\frac{\tau^m(a)-a}{q-\epsilon}\right)z
    	+ \sigma\left( \frac{\delta^m(a)}{q-\epsilon}\right).
\]
The result now follows from Lemma \ref{lem.indp2}.
\end{proof}

\begin{remark}
\label{rmk.kint}
In this section and beyond we assume the action of $H_n(\lambda)$ on $\qp$
and $\wa$ is given as in Proposition \ref{prop.daction}. 
By scaling variables, we may assume $\eta=1$.
We assume that $|\mu|=n > 1$.
Recall that in the case of $\wa$ we have $\lambda=\mu^{-2}$ and so $n$ is odd.
In general we write $\lambda = \mu^k$ for some $k$ relatively prime to $n$.
Note that we will never have $k=0$.
\end{remark}

Let $A$ be the $\kq$-algebra on generators $u,v$ subject to the relation
$uv-qvu-\kappa$ for $\kappa \in \{0,1\}$.
Specializing along $q=\mu$, we have $A_\mu \iso \qp$ (resp. $\wa$) when $\kappa=0$ (resp. $1$).
Set $R=A[x;\tau,\delta]$ with 
$\tau(u)=q u$, $\tau(v)=q^{k+1} v$, $\delta(u)=0$, $\delta(v)=u$.
By Proposition \ref{prop.indp2}, $R$ is again a $\kq$-algebra and
$R_\mu = A_\mu[x;\tau,\delta]$.
Thus, $R$ is the $\kk[q^{\pm 1}]$-algebra on generators $u,v,x$ subject to the relations
\[ uv- q vu - \kappa, \quad xu-qux, \quad xv- q^{k+1} vx-u, 
\qquad \kappa \in \{0,1\}.\]
Note that for $\epsilon \in \kk^\times$, the specialization $\Rep=R/(q-\epsilon)$ 
is an analog of the Heisenberg enveloping algebra \cite{gadhberg,KS}
and $R_\mu \iso A_\mu[x;\tau,\delta]$.
We will compute the discriminant of $R_\mu$ 
over the central subalgebra $C_\mu=\kk[u^n,v^n,x^n]$.

\begin{proposition}
\label{prop.coeffs}
Keep the above notation and hypotheses.
Set $z_1=u^n$, $z_2=v^n$ and $z_3 = x^n$.
Let $\sigma:R \rightarrow R_\mu$ be the canonical projection and define
\begin{align*}
	b_1 &= \left.\frac{q^{n^2}-1}{q-\mu}\right|_{q=\mu}, &
    	&b_2= \left.\frac{[n]_q!}{q-\mu}\right|_{q=\mu}, \\
	c_1 &= (k+1)b_1,  &
    	&c_2= \left.\frac{(-1)^{n+1} [n]_{q^k}!}{q-\mu}\right|_{q=\mu}.        
\end{align*}
By Proposition \ref{prop.indp2},
the induced Poisson structure on $C_\mu$ is given by
\begin{align*}
\qp:&	&	
	&\{z_1,z_2\} = b_1 z_1z_2, &
	&\{z_3,z_1\} = b_1 z_1z_3, &
    &\{z_3,z_2\} = c_1 z_2z_3 + c_2 z_1, \\
\wa:&	&	
	&\{z_1,z_2\} = b_1 z_1z_2 + b_2, &
	&\{z_3,z_1\} = b_1 z_1z_3, &
    &\{z_3,z_2\} = c_1 z_2z_3 + c_2 z_1.
\end{align*}
Moreover, $C_\mu= B_\mu[z_3;\alpha,\beta]_P$ where $\alpha,\beta$ are given by
\[
\alpha(z_1)=b_1 z_1,  \quad \alpha(z_2)=c_1 z_2, \qquad
\beta(z_1)=0, \quad \beta(z_2)=c_2 z_1.
\]
\end{proposition}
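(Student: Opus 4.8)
The plan is to realise $C_\mu = \kk[u^n,v^n,x^n]$ as a Poisson Ore extension of $B_\mu = \kk[u^n,v^n]$ and to read off the three brackets from Proposition \ref{prop.indp2}. First I would record the structural facts. Since $|\mu|=n$, one checks $u^n,v^n \in \cnt(A_\mu)$ (for instance $u^nv = \mu^n vu^n + [n]_\mu u^{n-1} = vu^n$, as $\mu^n=1$ and $[n]_\mu=0$), so $B_\mu$ is a central subalgebra of $A_\mu$; and a short computation shows $x^nu = \mu^n ux^n = ux^n$ and $x^nv = vx^n$ in $R_\mu$ (the correction term in the skew expansion of $x^nv$ carries a factor $[n]_{q^k}$ vanishing at $q=\mu$), so $x^n \in \cnt(R_\mu)$ and $C_\mu = B_\mu[x^n]$. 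Taking $m=n$ (the order of $\tau$), Proposition \ref{prop.indp2}(2) identifies $C_\mu$ with $B_\mu[z_3;\alpha,\beta]_P$, where $z_3 = \sigma(x^n)$ and
\[
\alpha(b) = \sigma\!\left(\frac{\tau^n(a)-a}{q-\mu}\right), \qquad
\beta(b) = \sigma\!\left(\frac{\delta^n(a)}{q-\mu}\right), \qquad a\in\sigma^{-1}(b).
\]
(Its applicability rests on the off-diagonal terms of the relevant skew expansions vanishing at $q=\mu$; this holds because $\mu^k$ is again a primitive $n$th root of unity, so $\qbinom{n}{i}_{q^k}$ vanishes there for $0<i<n$.) Every bracket now follows from $\{z_3,b\}=\alpha(b)z_3+\beta(b)$ together with the base bracket $\{z_1,z_2\}$ on $B_\mu$.

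The $\alpha$-computations are immediate: $\tau^n(u^n)=q^{n^2}u^n$ gives $\alpha(z_1)=b_1z_1$, while $\tau^n(v^n)=q^{(k+1)n^2}v^n$ gives $\alpha(z_2)=c_1z_2$ once I observe, via $q^{(k+1)n^2}-1=(q^{n^2}-1)\sum_{j=0}^{k}q^{jn^2}$ and $q^{n^2}\mapsto 1$ at $q=\mu$, that the scalar equals $(k+1)b_1=c_1$. Since $\delta(u)=0$ we get $\beta(z_1)=0$ at once, whence $\{z_3,z_1\}=b_1z_1z_3$. This reduces the remaining work to $\beta(z_2)$ and to $\{z_1,z_2\}$.

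Computing $\beta(z_2)=\sigma(\delta^n(v^n)/(q-\mu))$ is the main obstacle. Using $\delta(v^j)=\tau(v)\delta(v^{j-1})+\delta(v)v^{j-1}$, $\delta(u)=0$, $\tau(u)=qu$, and $uv=qvu$, I would prove by induction that in the quantum plane case
\[
\delta^i(v^n) = q^{\binom{i}{2}}\,[n]_{q^k}[n-1]_{q^k}\cdots[n-i+1]_{q^k}\,u^iv^{n-i},
\]
so that $\delta^n(v^n)=q^{\binom{n}{2}}[n]_{q^k}!\,u^n$. As $[n]_{q^k}!$ has a simple zero at $q=\mu$ (only its factor $[n]_{q^k}$ vanishes) and $q^{\binom{n}{2}}$ specialises to $(-1)^{n+1}$ (evaluating $\mu^{n(n-1)/2}$ separately for $n$ odd and even), this gives $\beta(z_2)=c_2z_1$, and hence $\{z_3,z_2\}=c_1z_2z_3+c_2z_1$. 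In the quantum Weyl case the same induction yields the identical top-degree term plus lower-degree corrections coming from $uv=qvu+1$; to dispose of these I would use that $\beta(z_2)\in B_\mu$ (guaranteed by Proposition \ref{prop.indp2}(2)), that its top-degree part is $c_2u^n$, and that any surviving lower-degree contribution would be an element of $B_\mu$ of total degree $<n$, hence a scalar---which is impossible, since $\delta$ preserves the parity of total degree (for $\deg u=\deg v=1$) and $n$ is odd, so $\delta^n(v^n)$ has no even-degree term. Thus $\beta(z_2)=c_2z_1$ in both cases.

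Finally, for $\{z_1,z_2\}=\sigma((u^nv^n-v^nu^n)/(q-\mu))$ I would present $A$ as an Ore extension $C[u;\tau',\delta']$ over $C=\kk_q[v]$ with $\tau'(v)=qv$ and $\delta'(v)=\kappa$, and compute as above. For the quantum plane ($\kappa=0$) one has $u^nv^n=q^{n^2}v^nu^n$, giving $\{z_1,z_2\}=b_1z_1z_2$; for the quantum Weyl algebra ($\kappa=1$) the extra contribution $(\delta')^n(v^n)=[n]_q!$ produces the constant $b_2=\sigma([n]_q!/(q-\mu))$, finite since $[n]_q!$ has a simple zero at $q=\mu$, so $\{z_1,z_2\}=b_1z_1z_2+b_2$. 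Assembling these computations gives the stated Poisson brackets and the presentation $C_\mu=B_\mu[z_3;\alpha,\beta]_P$.
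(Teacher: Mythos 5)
Your strategy is essentially the paper's: both proofs run everything through Proposition \ref{prop.indp2}(2), reduce the brackets to the lifts $\tau^n(u^n)$, $\tau^n(v^n)$, $\delta^n(v^n)$, and evaluate $\delta^n(v^n)$ via the $q$-Leibniz rule; your constants agree with the paper's since $\prod_{i=0}^{n-1}q^{n-i-1}=q^{\binom{n}{2}}\mapsto(-1)^{n+1}$. Two genuine differences: (i) for $\{z_1,z_2\}$ the paper simply cites \cite[Theorem 3.4]{NTY}, whereas you recompute it from the skew expansion of $u^nv^n$ over $\kk[v]$ --- more self-contained, at the cost of the issue below; (ii) for $\delta^n(v^n)$ in the Weyl case your intermediate formula $\delta^i(v^n)=q^{\binom{i}{2}}[n]_{q^k}\cdots[n-i+1]_{q^k}u^iv^{n-i}$ fails for $0<i<n$ and you patch the top term with a parity argument, whereas the paper's recursion $\delta^j(v^j)=[j]_{q^k}q^{j-1}u\,\delta^{j-1}(v^{j-1})$ (only $l=1$ survives in the Leibniz sum, since $\delta^j(v^{j-1})=0$ kills $l=0$ and $\delta^l(v)=0$ kills $l\ge 2$) holds verbatim for $\kappa=1$, so no lower-order corrections ever arise and the patch is unnecessary. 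Your parity argument is valid, just longer.

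One justification is wrong as stated and should be repaired. You discard the off-diagonal terms of the skew expansions ``because $\qbinom{n}{i}_{q^k}$ vanishes at $q=\mu$.'' A simple zero is not sufficient: you divide by $q-\mu$ before specializing, so a coefficient with a simple zero survives with a finite nonzero limit --- this is exactly the mechanism that produces $b_2$ and $c_2$ in the first place. The middle terms in fact vanish for one of two better reasons: structurally, the induced bracket must land in $C_\mu$ (resp.\ in $\cnt(A_\mu)=\kk[u^n,v^n]$), whose nonzero homogeneous components occur only in degrees divisible by $n$, so the coefficients of $t^j$ for $0<j<n$ (resp.\ of $v^{n-i}u^{n-i}$ for $0<i<n$) must be zero --- this is the argument used in the proof of Proposition \ref{prop.indp2}(2); or computationally, $\delta^i(v^n)$ for $1\le i<n$ itself carries a factor of $[n]_{q^k}$ (since $\delta(v^n)=[n]_{q^k}uv^{n-1}$ and $\delta(u)=0$), so each middle term has a double zero at $q=\mu$ and dies after the division. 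With that repair, and noting that Proposition \ref{prop.indp2}(2) cannot be re-cited wholesale for $\{z_1,z_2\}$ (the automorphism $\tau'$ with $\tau'(v)=qv$ is trivial on $\kk[v^n]$ at $q=\mu$, so its hypothesis on $m$ is not met), your argument is complete and reaches the stated formulas.
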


\begin{proof}
The bracket for $z_1$ and $z_2$ was computed in \cite[Theorem 3.4]{NTY}. Observe that
\begin{align*}
c_1 &= \left.\frac{q^{n^2(k+1)}-1}{q-\mu}\right|_{q=\mu} \\
 &= \left.\frac{(q^{n^2}-1)(1 + q^{n^2} + \cdots + q^{kn^2})}{q-\mu}\right|_{q=\mu} \\
 &= (k+1)\left.\frac{q^{n^2}-1}{q-\mu}\right|_{q=\mu}.
\end{align*}

It is left only to compute $\beta(z_2)$.
As $v^n \in \sigma\inv(z_2)$ we have $\beta(z_2)=\sigma\left( \frac{\delta^n(v^n)}{q-\mu}\right)$
by Proposition \ref{prop.indp2}.
Since $q^k\tau\delta=\delta\tau$ then by the $q$-Leibniz rule \cite[Lemma 6.2]{G} 
and the fact that $\delta^i(v^\ell)=0$ when $i>l$, we have
\[ \delta^n(v^n)
		= \sum_{i=0}^n \qbinom{n}{i}_{q^k} \tau^{n-i}\delta^i(v)\delta^{n-i}(v^{n-1})
        = [n]_{q^k} (q^{n-1}u)\delta^{n-1}(v^{n-1}).
\]
An induction now shows that $\delta^n(v^n) = \prod_{i=0}^{n-1} q^{n-i-1} [n-i]_{q^k} u^n$.
But $\displaystyle \prod_{i=0}^{n-1} \mu^{n-i-1} = (-1)^{n+1}$ and so the result follows.
\end{proof}

By Proposition \ref{prop.coeffs}, $b_1=c_1$ if and only if $k=0$ if and only if $\lambda=\mu^0=1$. 
Thus, we may safely disregard this case.
Moreover, when $\lambda=\mu^{-2}$, as in the case of $A=\wa$, we have $b_1=-c_1$.

We now employ techniques from \cite{OH1,OH2} to compute
the Poisson primes of $C_\mu$ and use this to determine
the discriminant of $R_\mu$ over $C_\mu$ in both the 
quantum plane case and the quantum Weyl algebra case.

\begin{lemma}
\label{lem.pprimes1}
Let $A = \qp$. Up to a scalar, the Poisson prime elements of $C_\mu$ are $z_1$ and $z_2z_3+ \tornado z_1$ where $\tornado = \frac{c_2}{c_1-b_1}$.
\end{lemma}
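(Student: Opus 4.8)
The plan is to identify the Poisson prime elements of the polynomial Poisson algebra $C_\mu = \kk[z_1,z_2,z_3]$ by exploiting its structure as the Poisson Ore extension $B_\mu[z_3;\alpha,\beta]_P$ with $B_\mu = \kk[z_1,z_2]$, as recorded in Proposition \ref{prop.coeffs}. First I would verify the two claimed elements are indeed Poisson prime. For $z_1$: the bracket data give $\{z_1,z_2\}=b_1 z_1z_2$ and $\{z_3,z_1\}=b_1 z_1z_3$, so $\{z_1,b\}\in z_1 C_\mu$ for each generator $b$, and by the Leibniz rule this extends to all of $C_\mu$; since $(z_1)$ is prime in the polynomial ring, $z_1$ is Poisson prime. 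For $w:=z_2z_3+\xi z_1$ with $\xi=\tfrac{c_2}{c_1-b_1}$, I would compute $\{w,z_i\}$ for $i=1,2,3$ and check each lies in $wC_\mu$; the choice of $\xi$ is precisely what is needed to make the $z_1$-terms cancel into a multiple of $w$ in $\{w,z_3\}$ (this is where $c_1-b_1\neq 0$, guaranteed by the discussion after Proposition \ref{prop.coeffs}, is used), and primeness of $(w)$ follows because $w$ is irreducible in $\kk[z_1,z_2,z_3]$.

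The substantive direction is completeness: showing every Poisson prime is, up to scalar, one of these two. The approach I would take follows Oh's method \cite{OH1,OH2} for Poisson Ore extensions. Let $p\in C_\mu$ be Poisson prime and write it as a polynomial in $z_3$ with coefficients in $B_\mu$, say $p=\sum_{j=0}^d p_j z_3^{\,j}$ with $p_d\neq 0$. The Poisson normality condition $\{p,z_3\}\in pC_\mu$, combined with $\{z_3,z_1\}=b_1 z_1 z_3$ and $\{z_3,z_2\}=c_1 z_2 z_3 + c_2 z_1$, forces strong constraints on the leading coefficient $p_d$: comparing top-degree terms in $z_3$ shows that $p_d$ must itself be Poisson normal in $B_\mu$, hence (since the only Poisson primes of the quantum-plane-type bracket on $B_\mu$ are $z_1,z_2$) a monomial $z_1^a z_2^b$ up to scalar. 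One then analyzes how $\alpha$ and $\beta$ act degree by degree to pin down $d$ and the lower coefficients.

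The main obstacle, and where I expect the real work to lie, is the descending induction on the $z_3$-degree $d$. After normalizing the leading coefficient, I would use the normality relations $\{p,z_1\}\in pC_\mu$ and $\{p,z_2\}\in pC_\mu$ to extract recursions relating $p_{j-1}$ to $p_j$ via $\alpha$ and $\beta$; the eigenvalue bookkeeping for $\alpha$ (with eigenvalues $b_1$ on $z_1$ and $c_1$ on $z_2$) must be done carefully, and the key arithmetic input is again $c_1\neq b_1$, which prevents unwanted degeneracies and ultimately forces either $d=0$ (yielding $p=z_1$ up to scalar, the only normal element of $B_\mu$ beyond $z_2$, which is \emph{not} prime in $C_\mu$ because of the $\beta(z_2)=c_2z_1$ term mixing it with $z_1$) or $d=1$ with $p$ proportional to $z_2z_3+\xi z_1$. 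Ruling out $d\geq 2$ and showing $z_2$ alone fails to be Poisson prime in $C_\mu$ are the two checks I would be most careful about, since they are exactly the places where the extra $\beta$-term distinguishes $C_\mu$ from a pure Poisson torus.
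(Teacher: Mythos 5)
Your verification that $z_1$ and $z_2z_3+\tornado z_1$ are Poisson prime is fine, and your reduction of the leading coefficient $p_N$ (writing $p=\sum_{j=0}^N p_jz_3^j$) to a scalar multiple of a monomial $z_1^az_2^b$ --- via the observation that Poisson normal elements of $B_\mu=\kk[z_1,z_2]$ under the bracket $\{z_1,z_2\}=b_1z_1z_2$ are monomials up to scalar --- matches the first half of the paper's argument, where the same computation classifies the Poisson primes lying in $\kk[z_1,z_2]$. The gap is in the completeness direction: the entire content of the lemma is that there are no \emph{other} Poisson primes, and your proposal defers exactly the steps that would establish this. Ruling out $z_3$-degree $N\geq 2$ and pinning down the lower coefficients are flagged as ``where the real work lies'' but never carried out; as written, nothing excludes a hypothetical Poisson prime with leading term, say, $z_2^2z_3^2$, because the recursions relating $p_{j-1}$ to $p_j$ through $\alpha$ and $\beta$ have not been shown to be inconsistent for $N\geq 2$. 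That eigenvalue bookkeeping is precisely where a direct attack could silently fail, so the proposal as it stands proves only the easy inclusion.

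The paper disposes of completeness without any such induction by localizing at $Q=\kk(z_1,z_2)$ and observing that $\beta$ becomes $\alpha$-inner there: setting $d=\tornado z_1z_2\inv$ with $\tornado=c_2/(c_1-b_1)$, one checks $\beta(r)=d\alpha(r)+\{r,d\}$ on the generators, so the change of variable $z_3\mapsto z_3+d$ turns $Q[z_3;\alpha,\beta]_P$ into $Q[z_3+d;\alpha]_P$. Since $Q$ is $\alpha$-simple and no integer multiple of $\alpha$ is an inner Poisson derivation, \cite[Lemma 3.3]{OH1} gives that $Q[(z_3+d)^{\pm1};\alpha]$ is Poisson simple, so the only Poisson prime of $C_\mu$ not supported on $\kk[z_1,z_2]$ is the one obtained from $z_3+d$ by clearing denominators, namely $z_2z_3+\tornado z_1$. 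This is also where the constant $\tornado$ arises naturally, rather than being produced by the degree-one case of a recursion. To complete your version you must either actually run the descending induction and show it forces $N\leq 1$, or adopt the localization step, which handles all higher-degree candidates at once.
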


\begin{proof}
Let $\fp$ be a Poisson prime element of $C_\mu$.
Then $\{\fp,a\} = \fp\gamma(a)$ for some derivation $\gamma$ of $\kk[z_1,z_2,z_3]$.
Suppose that $\fp \in \kk[z_1,z_2]$. 
We claim that $\fp = \kappa z_1$ for some $\kappa \in \kk$.

Write $\fp = \sum_{i=0}^\ell f_i z_2^i$ with $f_i \in \kk[z_1]$. Then
\begin{align*}
	\fp\gamma(z_1) = \{ \fp, z_1 \} = \sum_{i=0}^\ell \{ f_i z_2^i, z_1 \}
    	= \sum_{i=0}^\ell f_i \{ z_2^i , z_1 \}
        = \sum_{i=0}^\ell f_i (-ib_1 z_1z_2^i).
\end{align*}
For each $1 \leq i \leq \ell$ we have $f_i\gamma(z_1) = -b_1i f_i$, 
so at most one of the $f_i$ is nonzero. 
Therefore, $\fp = f z_2^\ell$ for some $f \in \kk[z_1]$, $\ell \in \NN$. 
By a symmetric argument, we can conclude that in fact $\fp = \kappa z_1^k z_2^\ell$ for some $\kappa \in \kk$, $k, \ell \in \NN$.
Since $\fp$ is a Poisson prime, it is also a prime element of $\kk[z_1,z_2]$, 
so $\fp = \kappa z_1$ or $\fp = \kappa z_2$. 
It is easy to check that $z_1$ is a Poisson normal element of $C_\mu$ and that $z_2$ is not,
which completes our proof in this case.

It is now left to determine the remaining Poisson primes of $C_\mu$.
Let $Q=\kk(z_1,z_2)$, the quotient field of $\kk[z_1,z_2]$. 
The corresponding localization extends $\alpha$ and $\beta$ uniquely to a Poisson Ore extension $Q[z_3; \alpha, \beta]_P$.
Let $\tornado = \frac{c_2}{c_1-b_1}$ and set $d = \tornado z_1 z_2\inv \in Q$.
We claim that for all $r \in Q$ we have $\beta(r) = d\alpha(r) + \{d,r\}$.
It suffices to check this on the generators $z_1,z_2$,
\begin{align*}
d\alpha(z_1) + \{z_1,d\} 
	&= \tornado \left(z_1 z_2\inv (b_1z_1) + \{z_1,z_1z_2\inv\}\right)
    = \tornado \left( b_1z_1^2 z_2\inv + z_1\{z_1,z_2\inv\}\right) \\
    &= \tornado \left( b_1z_1^2 z_2\inv + z_1(-b_1z_1z_2\inv)\right)
	= 0 = \beta(z_1), \\
d\alpha(z_2) + \{z_2,d\} 
	&= \tornado ( z_1 z_2\inv (c_1 z_2) + \{z_2, z_1z_2\inv \} )
	= \tornado ( c_1 z_1 +  z_2\inv \{z_2, z_1\} ) \\
    &= \tornado ( c_1 z_1 +  z_2\inv (-b_1z_1z_2 ) )
    = \tornado ( c_1 - b_1) z_1
	= c_2 z_1 = \beta(z_2).
\end{align*}
Thus, $Q[z_3;\alpha,\beta]_P = Q[z_3+d;\alpha]_P$.

It follows that $z_3+d$ is a Poisson normal element and clearing fractions 
gives that $z_2z_3+ \tornado z_1$ is a Poisson prime element in $P$.
On the other hand, $Q$ is $\alpha$-simple (because it is simple)
and $n\alpha$ is not a inner derivation for any $n$.
Thus, $Q[(z_3+d)^{\pm 1};\alpha]$ is Poisson simple \cite[Lemma 3.3]{OH1} 
and therefore there are no further Poisson prime elements.
\end{proof}

The choice of integer $k$ in Remark \ref{rmk.kint} 
will give rise to distinct Poisson structures on $C_\mu$ and,
as is clear from Proposition \ref{prop.coeffs}, different coefficients.
Let $\tornado$ be as in Lemma \ref{lem.pprimes1}.
Set $b_1'$, $c_1'$, $c_2'$, and $\tornado'$ to be the coefficients 
obtained by replacing $k$ with $k+n$.
Then $b_1=b_1'$ and $c_1'=(k+n+1)b_1'=(k+n+1)b_1$.
A computation shows that $c_2'=\frac{k+n}{k}c_2$ so that
\[ 
\tornado' = \frac{c_2'}{c_1'-b_1'} = \frac{\frac{k+n}{k}c_2}{(k+n)b_1} = \frac{c_2}{kb_1} = \frac{c_2}{c_1-b_1} = \tornado.
\]
Thus, the Poisson prime element $z_2z_3 + \tornado z_1$ 
in Lemma \ref{lem.pprimes1} does not depend on the choice of lift of
$k$ from $\ZZ/n\ZZ$ to $\ZZ$.

\begin{lemma}
\label{lem.pprimes2}
Let $A = \wa$. Then, up to a scalar, the only Poisson prime element of $C_\mu$ 
is $z_1z_2z_3+ \tornado z_1^2 + b_2b_1\inv z_3$.
\end{lemma}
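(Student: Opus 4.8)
The plan is to follow the architecture of Lemma~\ref{lem.pprimes1}, using the Poisson Ore presentation $C_\mu = B_\mu[z_3;\alpha,\beta]_P$ with $B_\mu = \kk[z_1,z_2]$ from Proposition~\ref{prop.coeffs}, and to separate the Poisson primes of $C_\mu$ into those lying in $B_\mu$ and those of positive degree in $z_3$. The feature distinguishing the quantum Weyl case from the quantum plane case is the constant term $b_2$ in the base bracket $\{z_1,z_2\} = b_1z_1z_2 + b_2 = b_1 w$, where I write $w = z_1z_2 + b_2b_1\inv$, together with the relation $c_1 = -b_1$ recorded after Proposition~\ref{prop.coeffs}. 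I set $\tornado = \frac{c_2}{c_1-b_1} = -\frac{c_2}{2b_1}$, so that $c_2 + 2b_1\tornado = 0$; this single identity is what forces the cross-terms below to cancel.

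For the primes in the base, note that $\{f,g\} = (\partial_{z_1}f\,\partial_{z_2}g - \partial_{z_2}f\,\partial_{z_1}g)\,b_1 w$ for all $f,g\in B_\mu$, so every bracket is divisible by $w$; in particular $\{\fp,z_1\} = -b_1 w\,\partial_{z_2}\fp$ and $\{\fp,z_2\} = b_1 w\,\partial_{z_1}\fp$. If an irreducible Poisson normal $\fp$ is not proportional to $w$, then $\gcd(\fp,w)=1$ forces $\fp\mid\partial_{z_1}\fp$ and $\fp\mid\partial_{z_2}\fp$, hence $\fp\in\kk$; thus $w$ is, up to scalar, the only Poisson prime of $B_\mu$. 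However $w$ is not Poisson normal in $C_\mu$: a direct computation gives $\{z_3,w\} = (b_1+c_1)z_1z_2z_3 + c_2z_1^2 = c_2z_1^2$, which is coprime to $w$ and hence not in $(w)$. So no Poisson prime of $C_\mu$ lies in $B_\mu$.

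For the primes of positive $z_3$-degree I localize at $Q = \kk(z_1,z_2)$ and extend $\alpha,\beta$ to $Q[z_3;\alpha,\beta]_P$. The key point is that $d = \tornado z_1^2 w\inv \in Q$ makes $\beta$ $\alpha$-inner: using $\{z_1,w\} = b_1z_1w$ and $\{z_2,w\} = -b_1z_2w$, one checks on generators that $d\alpha(z_1)+\{z_1,d\}=0=\beta(z_1)$ and $d\alpha(z_2)+\{z_2,d\} = -2b_1\tornado z_1 = c_2z_1 = \beta(z_2)$. Hence $Q[z_3;\alpha,\beta]_P = Q[z_3+d;\alpha]_P$, and clearing the denominator $w$ produces $\fp_0 = w(z_3+d) = z_1z_2z_3 + \tornado z_1^2 + b_2b_1\inv z_3$. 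A direct bracket computation (again using $c_2+2b_1\tornado=0$) shows $\{\fp_0,z_i\}=0$ for all $i$, so $\fp_0$ is in fact Poisson central, and $\fp_0$ is irreducible because its $z_3$-coefficient $w$ is coprime to $\tornado z_1^2$; thus $\fp_0$ is a Poisson prime.

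The main obstacle is uniqueness, and this is exactly where the Weyl case departs from Lemma~\ref{lem.pprimes1}. Since $c_1=-b_1$, the derivation $\alpha$ is inner on $Q$ --- indeed $\alpha(\,\cdot\,) = w\inv\{\,\cdot\,,w\}$ --- so I cannot invoke Poisson simplicity of $Q[(z_3+d)^{\pm1};\alpha]$, and correspondingly $\fp_0 = w(z_3+d)$ is central rather than merely normal. A Poisson prime of positive $z_3$-degree localizes to a Poisson prime of $Q[z_3+d;\alpha]_P \iso Q[\fp_0]$, in which $\fp_0$ is central; writing it monically in $\fp_0$ over $Q$ and using that the Poisson center of the field $Q$ is $\kk$, every coefficient must be a scalar, so the localized prime has the form $\fp_0-\kappa$ with $\kappa\in\kk$. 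To single out $\kappa=0$ I would invoke the natural $\ZZ$-grading with $\deg u = 1$, $\deg v = -1$, $\deg x = 2$ --- under which every defining relation of $R_\mu$ is homogeneous and $z_1,z_2,z_3$ acquire degrees $n,-n,2n$ --- so that the induced bracket on $C_\mu$ is homogeneous, the discriminant (hence its Poisson-prime factors) is homogeneous, and $\fp_0$ is homogeneous of degree $2n$ while $\fp_0-\kappa$ is inhomogeneous for $\kappa\neq 0$. Pinning down this additive shift is the delicate step I expect to require the most care, precisely because Poisson-normality alone does not distinguish the members of the family $\{\fp_0-\kappa\}$.
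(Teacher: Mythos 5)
Your construction of the candidate element coincides with the paper's own proof up to a change of coordinates: the paper inverts $z_1$ and substitutes $y=z_2+b_2b_1\inv z_1\inv$, so that $w=z_1y$, the brackets in $(z_1,y,z_3)$ take the quantum-plane form, and the $\alpha$-inner witness $d=\tornado z_1 y\inv$ is exactly your $\tornado z_1^2 w\inv$; clearing denominators yields the same $\fp_0$. Where you genuinely diverge is uniqueness, and your suspicion there is well founded. The paper settles uniqueness by citing ``the computations from Lemma~\ref{lem.pprimes1}'', whose last step is Poisson simplicity of $Q[(z_3+d)^{\pm1};\alpha]$ under the hypothesis that no multiple of $\alpha$ is inner; as you note, $c_1=-b_1$ makes $\alpha$ inner (via $w$) and $\fp_0=w(z_3+d)$ Poisson central, so that hypothesis fails in the Weyl case. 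Your observation has real content: since $\{\fp_0,z_i\}=0$ for all $i$ and $\fp_0-\kappa$ is irreducible for every $\kappa\in\kk$ (its $z_3$-coefficient $w$ is coprime to $\tornado z_1^2-\kappa$), every $\fp_0-\kappa$ is a Poisson prime of $C_\mu$ in the paper's sense, so the lemma's literal uniqueness claim requires the homogeneity refinement you supply. That refinement does work: with $\deg u=1$, $\deg v=-1$, $\deg x=2$ the algebra $R_\mu$ is $\ZZ$-graded, the regular trace of a homogeneous element is homogeneous of the same degree, hence the discriminant is homogeneous and only the $\kappa=0$ member of the family can divide it, which is all that Lemma~\ref{lem.dRlambda} needs. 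Two steps of yours deserve to be written out rather than asserted: (i) the passage from a Poisson prime of positive $z_3$-degree in $C_\mu$ to a monic Poisson normal element of $Q[\fp_0]$, whose coefficients are then forced into the Poisson center of $Q$ (this center is $\kk$ because $\{f,g\}$ equals $b_1w$ times the Jacobian of $(f,g)$), and back down to $C_\mu$; and (ii) the fact that a Poisson prime of $C_\mu$ lying in $B_\mu$ is already Poisson normal in $B_\mu$, which you use implicitly and which follows from $C_\mu$ being free over $B_\mu$. Modulo writing those out, your argument is correct and is in fact more careful than the paper's at precisely the point where the Weyl case differs from the quantum-plane case.
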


\begin{proof}
It is easy to see that $z_1$ is not a Poisson normal element in this case and 
so we may pass to $\kk[z_1^{\pm 1},z_2]$. 
Set $d = b_2b_1\inv z_1\inv$ so $b_2 = d(b_1 z_1)$. 
We make the change of variable $y = z_2 + d$ and
the Poisson bracket on $\kk[z_1^{\pm 1},y]$ is then $\{z_1,y\} = b_1z_1y$.

We now pass to $Q=\kk(z_1,z_2)$. Since $c_1=-b_1$ in this case then
\[
\{z_3,y\} 
	= \{z_3,z_2\} + b_2b_1\inv\{z_3,z_1\inv\}
	= (c_1z_2z_3+c_2z_1) - b_2z_1\inv z_3 
    = c_1yz_3 + c_2z_1.
\]
Thus, $y$ is not Poisson normal.
Our computations from Lemma \ref{lem.pprimes1} now show that the 
only Poisson prime element is $z_3 + \tornado z_1 y\inv$.
Clearing fractions, it follows that the only Poisson prime element of 
$C_\mu$ is $z_1z_2z_3 + \tornado z_1^2 + b_2b_1\inv z_3 $ as claimed.
\end{proof}

\begin{lemma}
\label{lem.dRlambda}
Let $\alpha = n^2(n-1)$. Then
\begin{align*}
d(R_\mu/C_\mu) =_{k^\times} 
	\begin{cases}
    	z_1^{\alpha} (z_2z_3+ \tornado z_1)^{\alpha} & \text{ if } A=\qp \\
        (z_1z_2z_3+ \tornado z_1^2 + b_2b_1\inv z_3)^\alpha	& \text{ if } A=\wa.
    \end{cases}
\end{align*}
\end{lemma}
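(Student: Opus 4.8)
The plan is to combine the Poisson-prime computations of Lemmas \ref{lem.pprimes1} and \ref{lem.pprimes2} with the discriminant--Poisson dictionary of \cite{NTY} to pin down the \emph{factors} of $d(R_\mu/C_\mu)$, and then to fix the \emph{multiplicities} by a leading-term comparison with the associated graded algebra. Note that $R_\mu$ is free over $C_\mu = \kk[z_1,z_2,z_3]$ of rank $n^3$ with basis $\{u^iv^jx^k : 0 \le i,j,k < n\}$, and $C_\mu^\times = \kk^\times$. By \cite{NTY}, every irreducible factor of $d(R_\mu/C_\mu)$ is, up to a scalar, a Poisson prime of $C_\mu$, so the two lemmas immediately restrict the possibilities: in the quantum plane case $d(R_\mu/C_\mu) =_{\kk^\times} z_1^{a}(z_2z_3 + \tornado z_1)^{b}$, and in the quantum Weyl case $d(R_\mu/C_\mu) =_{\kk^\times} (z_1z_2z_3 + \tornado z_1^2 + b_2b_1\inv z_3)^{c}$, for some exponents $a,b,c \ge 0$. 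It then remains only to determine these exponents.

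For this I would filter $R_\mu$ by total degree in $u,v,x$ (each of degree one). Since both the constant $\kappa$ and the Ore term $\delta(v)=u$ are of strictly lower order, the associated graded $\gr R_\mu$ is, in either case, the quantum affine $3$-space $S$ on $u,v,x$ with relations $uv = \mu vu$, $xu = \mu ux$, $xv = \mu^{k+1}vx$. This filtration induces a grading on $C_\mu$ in which $z_1,z_2,z_3$ each have degree $n$, and $S$ is free of rank $n^3$ over $\gr C_\mu = \kk[z_1,z_2,z_3]$ on the same monomial basis. The standard filtered--graded comparison for discriminants (cf.\ \cite{CPWZ1}) then yields that the highest-degree component of $d(R_\mu/C_\mu)$ agrees, up to a scalar, with $d(S/\gr C_\mu)$, provided the latter is nonzero.

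The graded discriminant $d(S/\gr C_\mu)$ I would compute directly from the regular trace. A short reordering shows that $u^iv^jx^k \cdot u^{i'}v^{j'}x^{k'}$ is a root of unity times $u^{i+i'}v^{j+j'}x^{k+k'}$, and that $\tr_{\reg}(u^pv^{p'}x^{p''})$ vanishes unless $p,p',p''$ are all divisible by $n$, in which case it equals $n^3 z_1^{p/n}z_2^{p'/n}z_3^{p''/n}$. Consequently the only nonzero entries of the Gram matrix $(\tr_{\reg}(e_ie_j))$ are those pairing $e_{(i,j,k)}$ with $e_{(n-i,n-j,n-k)}$ (indices read modulo $n$), so it is a monomial matrix whose determinant is, up to a nonzero scalar, the product of these entries. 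Each such entry contributes a factor $z_1$ exactly when $i \neq 0$, a factor $z_2$ when $j \neq 0$, and a factor $z_3$ when $k \neq 0$; counting, we obtain $d(S/\gr C_\mu) =_{\kk^\times} (z_1z_2z_3)^{n^2(n-1)} = (z_1z_2z_3)^{\alpha}$, which is nonzero and hence validates the comparison above.

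Finally I would read off the exponents by matching highest-degree components, using $\deg z_i = n$. In the quantum plane case the top component of $z_1^{a}(z_2z_3+\tornado z_1)^b$ is $z_1^{a}z_2^{b}z_3^{b}$ (here $\tornado \neq 0$, so $z_2z_3$ is genuinely the leading term of the second factor), and equating with $(z_1z_2z_3)^\alpha$ forces $a = b = \alpha$. In the quantum Weyl case the top component of $(z_1z_2z_3 + \tornado z_1^2 + b_2b_1\inv z_3)^c$ is $(z_1z_2z_3)^c$, so $c = \alpha$. The main obstacle is the justification used in the second paragraph: that passing to $\gr R_\mu$ neither loses nor creates factors, i.e.\ that the highest-degree component of $d(R_\mu/C_\mu)$ is exactly $d(\gr R_\mu/\gr C_\mu)$. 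This requires checking that the regular trace is filtration-compatible, so that the leading terms of the Gram entries assemble into the graded Gram matrix, and is clean here only because the explicit computation gives $d(S/\gr C_\mu)\neq 0$, ruling out any cancellation in the determinant.
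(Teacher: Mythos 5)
Your proposal is correct, and while it shares the paper's first step---invoking \cite[Theorem 3.2]{NTY} together with Lemmas \ref{lem.pprimes1} and \ref{lem.pprimes2} to restrict the irreducible factors of $d(R_\mu/C_\mu)$ to Poisson primes of $C_\mu$---your determination of the multiplicities takes a genuinely different route. The paper works with the weighted grading $\deg u = 2$, $\deg v = \deg x = 1$, under which the quantum-plane version of $R_\mu$ is actually graded and both Poisson primes $z_1$ and $z_2z_3+\tornado z_1$ are homogeneous of the same degree $2n$; the degree formula $\deg d(R_\mu/C_\mu) = 2\sum_{b\in\cB}\deg b$ then yields only the sum $\alpha_1+\alpha_2 = 2n^2(n-1)$, and the individual exponents are extracted by applying \cite[Proposition 4.10]{CPWZ2} to the filtration of the Weyl case whose associated graded is the quantum-plane case, so that $\gr\bigl((z_1z_2z_3+\tornado z_1^2+b_2b_1\inv z_3)^{\alpha}\bigr)=z_1^{\alpha}(z_2z_3+\tornado z_1)^{\alpha}$ forces $\alpha_1=\alpha=\alpha_2$. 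You instead use the standard filtration $\deg u=\deg v=\deg x=1$ to pass, in both cases at once, to the quantum affine $3$-space, compute its discriminant over $\kk[u^n,v^n,x^n]$ explicitly from the monomial Gram matrix as $(z_1z_2z_3)^{n^2(n-1)}$, and then match leading terms case by case. Your route makes the two cases independent and supplies an explicit nonvanishing check of the graded discriminant, at the cost of carrying out the trace computation on quantum affine space; the paper's route avoids any Gram-matrix computation but must treat the two cases together, with the quantum-plane degree count feeding into the Weyl case and the Weyl filtration resolving the plane case. The one point you rightly flag---that the top-degree component of $d(R_\mu/C_\mu)$ is $d(\gr R_\mu/\gr C_\mu)$---is precisely \cite[Proposition 4.10]{CPWZ2}, whose hypotheses (associated graded a domain, basis compatible with the filtration) are satisfied here, so there is no gap.
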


\begin{proof}
First suppose we are in the case $A=\qp$.
Then $\cB = \{ u^iv^jx^k \mid  0 \leq i, j,k \leq n-1 \}$ is a basis of $R_\mu$ over $C_\mu$.
By Lemma \ref{lem.pprimes1}, the Poisson primes of $C_\mu$ are $z_1$ and $z_2z_3+ \tornado z_1$.
Thus, using the definition of the discriminant as at \eqref{eq.disc},
$d(R_\mu/C_\mu) =_{k^\times} z_1^{\alpha_1} (z_2z_3+ \tornado z_1)^{\alpha_2}$
for some $\alpha_1,\alpha_2 \in \NN$ \cite[Theorem 3.2]{NTY}. 
We define a grading on $R_\mu$ by setting $\deg u = 2$ and $\deg v = \deg x = 1$. Then 
\begin{align*}
2n(\alpha_1+ \alpha_2) 
	&= \deg(p_1)\alpha_1 + \deg(p_2)\alpha_1 \\
    &= 2 \sum_{b \in \cB} \deg b
	= 2 \sum_{i=0}^{n-1} \sum_{j=0}^{n-1} \sum_{k=0}^{n-1} \deg(u^iv^jx^k) \\
    &= 2\sum_{i=0}^{n-1} \sum_{j=0}^{n-1} \sum_{k=0}^{n-1} \left( 2i + j + k\right)
    = 2(2n^3(n-1)).
\end{align*}
Therefore, $(\alpha_1+\alpha_2) = 2n^2(n-1)$.

Now suppose we are in the case $A=\wa$.
By Lemma \ref{lem.pprimes2} and \cite[Theorem 3.2]{NTY},
$d(R_\mu/C_\mu) =_{\kk^\times} (z_1z_2z_3+ \tornado z_1^2 + z_3)^\alpha$
for some $\alpha \in \NN$.
By \cite[Proposition 4.10]{CPWZ2}, discriminants respect filtrations.
Define a filtration corresponding to the grading on $R_\mu$ above, then 
\begin{align*}
z_1^{\alpha_1} (z_2z_3+ \tornado z_1)^{\alpha_2}
	&= \gr((z_1z_2z_3+ \tornado z_1^2 + z_3)^\alpha) \\
    	&= (z_1(z_2z_3+ \tornado z_1))^\alpha \\
	&= z_1^{\alpha} (z_2z_3+ \tornado z_1)^{\alpha}.
\end{align*}
Thus, $\alpha_1=\alpha=\alpha_2$.
\end{proof}

\begin{theorem}
\label{thm.disc}
Let $H=H_n(\lambda)$. Let $|\mu|=n>1$ and let $A = \qp$ or $A=\wa$.
If $H$ acts linearly and inner faithfully on $A$, then
$d(A\#H /A^H) =_{\kk^\times} u^{2n^4(n-1)}$.
\end{theorem}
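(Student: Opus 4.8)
The plan is to prove the theorem directly over the center $A^H$, running the same machinery used for $R_\mu$ in Lemma \ref{lem.dRlambda} but for $A\#H$ itself, and to reduce the quantum Weyl algebra case to the quantum plane case by a filtration argument. By Corollary \ref{cor.qpcnt} we have $\cnt(A\#H) = A^H = \kk[u^n,v^n]$; write $z_1 = u^n$ and $z_2 = v^n$. The algebra $A\#H$ is free over $A^H$ of rank $n^4$ with basis $\cB = \{u^a v^b\#g^i x^j : 0\le a,b,i,j\le n-1\}$. I would first settle the case $A=\qp$ and then obtain $A=\wa$ from it.

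For $A=\qp$, realizing $A\#H$ as a specialization as in the discussion preceding Proposition \ref{prop.coeffs}, the induced Poisson bracket on $\cnt(A\#H)=\kk[z_1,z_2]$ is governed by the single commutator $[u^n,v^n]$, whose lift depends only on the relation $uv = qvu$; hence $\{z_1,z_2\} = b_1 z_1 z_2$, exactly the restriction of the structure computed in Proposition \ref{prop.coeffs}. Rerunning the first half of the proof of Lemma \ref{lem.pprimes1} on $\kk[z_1,z_2]$ shows that $z_1$ is Poisson normal while $z_2$ is not, so up to scalar $z_1$ is the unique Poisson prime of $\cnt(A\#H)$. By \cite[Theorem 3.2]{NTY} it follows that $d(A\#H/A^H) =_{\kk^\times} z_1^{m}$ for some $m\in\NN$. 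To pin down $m$ I would use the grading argument from Lemma \ref{lem.dRlambda}: the assignment $\deg u = 2$, $\deg v = \deg x = 1$, $\deg g = 0$ makes every defining relation of $\qp\#H$ homogeneous, so $A\#H$ is graded, $A^H$ is a graded central subalgebra, and the discriminant is homogeneous of degree $2\sum_{b\in\cB}\deg b$. Since $\sum_{b\in\cB}\deg b = \sum_{a,b,i,j=0}^{n-1}(2a+b+j) = 2n^4(n-1)$, we get $\deg d = 4n^4(n-1)$, and as $\deg z_1 = 2n$ this forces $m = 2n^3(n-1)$, whence $d(A\#H/A^H) =_{\kk^\times} z_1^{2n^3(n-1)} = u^{2n^4(n-1)}$.

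To deduce the case $A=\wa$, I would filter $A\#H$ by the same assignment $\deg u = 2$, $\deg v = \deg x = 1$, $\deg g = 0$. Under this filtration the only inhomogeneity in the defining relations of $\wa\#H$ is the constant $1$ in $uv-\mu vu - 1$, which has degree $0$ and is dominated by the degree-$3$ leading term, while the relation $xv-\lambda\mu vx - u$ is homogeneous of degree $2$ and is preserved intact. Consequently $\gr(\wa\#H) = \qp\#H$ and $\gr(A^H) = (\qp)^H$, since $u^n,v^n$ are homogeneous. Because discriminants respect filtrations \cite[Proposition 4.10]{CPWZ2} and the quantum plane discriminant is the monomial $u^{2n^4(n-1)}$, this forces $d(\wa\#H/(\wa)^H) =_{\kk^\times} u^{2n^4(n-1)}$ as well, mirroring the Weyl-algebra step of Lemma \ref{lem.dRlambda}.

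I expect the main obstacle to be the opening claim of the quantum plane argument: justifying rigorously that $A\#H$, which carries the group-like $g$ with $g^n=1$ together with the nilpotency relation $x^n=0$, is presented as a specialization of a suitable $\kk[q^{\pm 1}]$-order to which the framework of \cite{NTY} applies, and that the induced Poisson structure on its center is genuinely the claimed restriction of the structure on $C_\mu$. The delicate point is that $x^n$ is central only after specialization, so the relationship between $A\#H$ and the Ore extension $R_\mu$ (via $A\#H$ as a skew group algebra of $\langle g\rangle$ over $R_\mu/(x^n)$) must be handled with care. Once the specialization is set up, identifying the unique Poisson prime and extracting the multiplicity via the grading are routine given Lemma \ref{lem.pprimes1} and the degree computation above.
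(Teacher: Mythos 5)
Your plan diverges from the paper's at the key structural point, and the divergence creates a genuine gap. The paper does \emph{not} apply the Poisson machinery to $A\#H$ over $\kk[u^n,v^n]$ directly; it computes $d(R_\mu/C_\mu)$ for the Ore extension $R_\mu=A_\mu[x;\tau,\delta]$ over the \emph{three}-variable center $C_\mu=\kk[u^n,v^n,x^n]$, adjoins $g$ via \cite[Theorem 6.1]{GKM}, and only then passes to $A\#H$ as the quotient by the central ideal $(x^n,g^n-1)$ using \cite[Proposition 4.7]{CPWZ2}. This detour is not cosmetic. Your claim that ``rerunning the first half of Lemma \ref{lem.pprimes1} on $\kk[z_1,z_2]$ shows $z_1$ is Poisson normal while $z_2$ is not'' is false: for the log-canonical bracket $\{z_1,z_2\}=b_1z_1z_2$ one has $\{z_2,z_1\}=-b_1z_1z_2\in z_2\kk[z_1,z_2]$, so $z_2$ is Poisson normal and hence Poisson prime as well. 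In the paper, $z_2$ is disqualified only because of its bracket with $z_3=x^n$, namely $\{z_3,z_2\}=c_1z_2z_3+c_2z_1\notin z_2C_\mu$; discard $z_3$ and you lose exactly the information that excludes $v^n$ from the discriminant. Your method therefore only yields $d=_{\kk^\times}z_1^{m_1}z_2^{m_2}$, and the single degree equation $2nm_1+nm_2=4n^4(n-1)$ cannot determine $(m_1,m_2)$. (In the Weyl case the restricted bracket is $\{z_1,z_2\}=b_1z_1z_2+b_2$, for which $z_1$ is not even Poisson normal, so the direct approach degrades further; your filtration step then only controls the top-degree part of the Weyl discriminant rather than the element itself, whereas the paper first pins down its \emph{form} via the unique Poisson prime of $C_\mu$ and uses the filtration only to extract the exponent.)

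The second gap is the one you flag yourself but do not resolve: to invoke \cite[Theorem 3.2]{NTY} for $A\#H$ you must exhibit $A\#H$ as a specialization of a $\kq$-family inducing the claimed Poisson structure on $\kk[u^n,v^n]$. This is obstructed precisely because $x^n$ is not central (and $x^n=0$, $g^n=1$ are not compatible relations) in any natural generic fiber; the paper's whole architecture --- work in $R_\mu$ where $x$ is a genuine Ore variable, keep $x^n$ as a central coordinate $z_3$, and specialize $z_3\mapsto 0$, $\hat g^{\,n}\mapsto 1$ only at the last step --- exists to avoid this. To repair your argument you would essentially have to reintroduce that detour, at which point you recover the paper's proof.
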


\begin{proof}
Let $A=\wa$. The case of $\qp$ is similar.
By Lemma \ref{lem.dRlambda}, 
$d(R_\mu/C_\mu) =_{\kk^\times} (z_1z_2z_3+ \tornado z_1^2 + z_3)^\alpha$.
Set $S=R_\mu[\hat{g};\tau]$. Note that $|\tau|=n$ and no non-trivial power of $\tau$ is $X$-inner.
Let $\ell$ be the rank of $R_\mu$ as a $C_\mu$-module. By \cite[Theorem 6.1]{GKM},
\[ d(S/\cnt(S)) = (z_1z_2z_3+ \tornado z_1^2 + z_3)^{\alpha n}  \cdot \hat{g}^{\ell n}.\]
Note that
\[ A\#H \iso \frac{A[x;\tau,\delta][g;\tau]}{(x^n,g^n-1)}.\]
The result now follows from \cite[Proposition 4.7]{CPWZ2} and Lemma \ref{lem.dRlambda}.
\end{proof}

For a $\kk$-algebra $R$ which is module finite over its center $\cnt{(R)}$, 
the \textsf{Azumaya locus} $\mathcal{A}(R)$ of $R$ consists of those maximal ideals 
$\mathfrak{m}$ of $\cnt(R)$ such that $\mathfrak{m}R$ is the annihilator of an 
irreducible $R$-module of maximal dimension. 
By \cite[Theorem III.1.7]{BG}, $\mathcal{A}(R)$ is an open dense subset of $\Maxspec \cnt(R)$. 
Recently, Brown and Yakimov have related $\mathcal{A}(R)$ to the discriminant $d(R/\cnt(R))$. 
Having computed the discriminant of $A\#H$ over its center, 
we are able to characterize the Azumaya locus of $A\#H$ as a corollary.

\begin{corollary} \label{cor.azumaya}
Let $A$ and $H$ be as in Theorem~\ref{thm.disc}.
Then $\mathcal{A}(A\#H)$ is the complement of the zero locus of $(u^n)$ in $\Maxspec (\kk[u^n,v^n])$.
\end{corollary}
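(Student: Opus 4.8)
The plan is to deduce the statement from the general principle, due to Brown and Yakimov \cite{BY}, that for a suitable prime PI algebra $R$ which is free of finite rank over its center $Z=\cnt(R)$, the Azumaya locus $\mathcal{A}(R)$ is precisely the complement in $\Maxspec(Z)$ of the zero locus of the discriminant $d(R/Z)$. All of the ingredients have been prepared: the center has been identified, the algebra is prime, and the discriminant has been computed, so the corollary should fall out once the criterion is applied and the discriminant is rewritten as an element of the center.

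First I would assemble the hypotheses. By Corollary~\ref{cor.qpcnt}, $\cnt(A\#H)=A^H=\kk[u^n,v^n]$, and since the discriminant $d(A\#H/A^H)$ is defined via \eqref{eq.disc}, the algebra $A\#H$ is free of finite rank over this center. Because $|\mu|=n$, Lemma~\ref{lem.fring}(2) shows $A\#H$ is prime, and it is visibly affine and module-finite over $Z$. Crucially, $Z=\kk[u^n,v^n]$ is a commutative polynomial ring, hence regular; this is what guarantees that the non-Azumaya locus is detected exactly by the discriminant, with no extra contribution coming from singularities of the center.

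With these in place I would invoke the criterion, so that $\mathfrak{m}\in\mathcal{A}(A\#H)$ if and only if $d(A\#H/A^H)\notin\mathfrak{m}$. Concretely, at a maximal ideal $\mathfrak{m}$ of $Z$ the fiber $(A\#H)/\mathfrak{m}(A\#H)$ is a $\kk$-algebra of dimension equal to the rank, its regular trace form reduces to the trace form of the fiber, and (in characteristic zero) nondegeneracy of this form, equivalently $d(A\#H/A^H)\notin\mathfrak{m}$, is exactly the condition that the fiber is a full matrix algebra, i.e.\ that $\mathfrak{m}$ lies in the Azumaya locus. I would then translate the computation of the discriminant into a zero locus: by Theorem~\ref{thm.disc}, $d(A\#H/A^H)=_{\kk^\times}u^{2n^4(n-1)}$, and since $2n^4(n-1)=n\cdot 2n^3(n-1)$ this equals $(u^n)^{2n^3(n-1)}$, an honest element of $Z=\kk[u^n,v^n]$ whose radical ideal is $(u^n)$. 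Hence the zero locus of $d(A\#H/A^H)$ in $\Maxspec(\kk[u^n,v^n])$ coincides with the zero locus of $(u^n)$, and the corollary follows.

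I expect the main obstacle to be the careful verification that the Brown--Yakimov hypotheses apply in the strong form giving equality (rather than merely a containment) between the non-Azumaya locus and the zero locus of $d(A\#H/A^H)$. The regularity of $Z$ together with the freeness of $A\#H$ over $Z$ are precisely what make this clean, and I would want to confirm that the relevant discriminant ideal is principal and generated by $d(A\#H/A^H)$, so that no higher discriminant ideals need to be invoked.
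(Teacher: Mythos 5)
Your proposal is correct and follows exactly the route the paper takes: the paper's proof is a one-line appeal to the Brown--Yakimov Main Theorem combined with Theorem~\ref{thm.disc}, identifying the non-Azumaya locus with the zero locus of $d(A\#H/A^H)=_{\kk^\times}(u^n)^{2n^3(n-1)}$ in $\Maxspec(\kk[u^n,v^n])$. Your additional verification of the hypotheses (primeness from Lemma~\ref{lem.fring}, freeness over the center $\kk[u^n,v^n]$ from Corollary~\ref{cor.qpcnt}, regularity of the center) is exactly what the paper leaves implicit.
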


\begin{proof}
This is a direct consequence of \cite[Main Theorem]{BY} and Theorem \ref{thm.disc}.
\end{proof}

Henceforth, set $A = \kk_{-1}[u,v]$ and $H=H_2(-1)$ with the usual action of $H$ on $A$.
Let $S=A\#H$ and define the {\sf $H$-restricted automorphism group} of $S$ to be
those algebra automorphisms of $S$ that  fix $H$ up to scalar.
That is, those $\phi \in \Aut(S)$ such that
\begin{align}
\label{eq.raut}
\phi(1\#g)=\varepsilon \#g \quad\mbox{and}\quad  \phi(1\#x)=\tornado \#x \quad \quad\mbox{for some}\quad \varepsilon = \pm 1, \tornado \in \kk^\times.
\end{align}
It is clear that $\rAut(S)$ is a subgroup of $\Aut(S)$.
As a final application of our discriminant calculation, we determine $\rAut(S)$\footnote{The computations are omitted.
The interested reader is referred to the appendix of the preprint version of
this paper, available at https://arxiv.org/abs/1707.02822.}.

We define two families of maps below, called even and odd type, respectively.
In what follows, $\alpha, \tornado \in \kk^\times$, $\varepsilon = \pm 1$, $I \subset \NN$ is a finite set of odd numbers, and for each $i\in I$, $\beta_i \in \kk$.
A map $\phi_e$ is said to be of {\sf even type} if it satisfies \eqref{eq.raut} and
\[ \phi_e(u\#1) = \alpha (u \# 1) \quad\mbox{and}\quad \phi_e(v\#1) = \alpha \left(\tornado \inv v \# 1 + \sum_{i \in I} \beta_i u^i \# x\right).\]
A map $\phi_o$ is said to be of {\sf odd type} if it satisfies \eqref{eq.raut} and
\[ \phi_o(u\#1) = \alpha \left( u\#g - 2 v \# gx\right)  \quad\mbox{and}\quad \phi_o(v\#1) = \alpha \left(\tornado \inv v\#g + \sum_{i \in I} \beta_i u^i \# gx\right).\]


The maps $\phi_e$ and $\phi_o$, when extended linearly,
define automorphisms of $S$.
Moreover, the composition of two even or two odd type automorphisms is an even automorphism,
while the composition of an even with an odd is odd.

\begin{theorem}\label{thm.raut}
Let $\phi \in \rAut(S)$. Then $\phi$ is of even type or of odd type.
\end{theorem}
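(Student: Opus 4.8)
The plan is to prove that every $\phi \in \rAut(S)$ is determined by its action on the two generators $u\#1$ and $v\#1$, and that the only possibilities for $\phi(u\#1)$ and $\phi(v\#1)$ are precisely those listed in the even and odd type definitions. Since $S = A\#H_2(-1)$ is generated as an algebra by $u\#1$, $v\#1$, $1\#g$, and $1\#x$, and since $\phi$ already fixes $1\#g$ and $1\#x$ up to scalar by the defining condition \eqref{eq.raut}, the entire map $\phi$ is determined once we know $\phi(u\#1)$ and $\phi(v\#1)$. So the real content is a constraint analysis on these two images.

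First I would exploit the discriminant invariance. Since $\phi$ is an algebra automorphism of $S$, it must preserve the center $\cnt(S) = \kk[u^2,v^2]$ (by Corollary \ref{cor.qpcnt} with $n=2$) and must send the discriminant ideal to itself. By Theorem \ref{thm.disc}, $d(S/\cnt(S)) =_{\kk^\times} u^{2 \cdot 2^4 \cdot 1} = u^{32}$, so the principal ideal $(u^{32}) = (u^2)^{16}$ of $\cnt(S)$ is $\phi$-stable. Consequently $\phi$ must fix the prime ideal $(u^2)$ up to the action on the center, which forces $\phi(u^2\#1) =_{\kk^\times} u^2 \# 1$. This is the key rigidity: it pins down $\phi(u\#1)$ up to a scalar and a sign ambiguity coming from the grouplike $g$, yielding the two branches $\phi(u\#1) = \alpha(u\#1)$ (even) or $\phi(u\#1) = \alpha(u\#g - 2v\#gx)$ (odd). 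I would verify that $u\#g - 2v\#gx$ is exactly the other square root of $u^2\#1$ in $S$ (recall from the earlier remark that $u\#g + 2v\#gx$ is central when the action is on $\kk[u,v]$, so an analogous element appears here), which is what produces the odd branch.

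Next I would determine $\phi(v\#1)$ using the defining relations of $S$. The key relations are $(v\#1)(u\#1) = -(u\#1)(v\#1)$ coming from $\mu = -1$, together with the relations governing how $1\#g$ and $1\#x$ interact with $u\#1$ and $v\#1$ (namely $g \cdot u = \mu u = -u$, $g\cdot v = \lambda\mu v = v$, $x\cdot u = 0$, $x\cdot v = \eta u$). Applying $\phi$ to each of these relations and using \eqref{eq.raut} together with the now-known form of $\phi(u\#1)$ yields a system of linear constraints on the coefficients of $\phi(v\#1)$ when expanded in the PBW basis $\{u^iv^j\#g^\epsilon x^\delta\}$. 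Solving this system is what forces $\phi(v\#1)$ into the stated shape, with the leading term $\tornado\inv v$ (the $\tornado$ being inherited from $\phi(1\#x) = \tornado\#x$) and the correction terms $\sum_{i\in I}\beta_i u^i\#x$ (or $\#gx$ in the odd case), where $I$ must consist of odd exponents because of the $g$-weight parity constraint $g\cdot(u^iv^j) = (-1)^{i+j}u^iv^j$ paired with the requirement that $\phi$ respect the grouplike relation.

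\textbf{The main obstacle} I anticipate is not the rigidity from the discriminant, which is clean, but rather the bookkeeping in the last step: showing that the correction terms in $\phi(v\#1)$ are forced to live in the span of $\{u^i\#x : i \text{ odd}\}$ and nothing more --- in particular ruling out terms involving higher powers of $v$ or terms of the form $u^iv^j\#g^\epsilon x^\delta$ with $j \geq 1$. This requires carefully tracking both the $g$-weight (which controls parity of $i+j$) and the compatibility with $\phi(x\#1)$-type relations (which controls the $x$-degree and kills the $v$-dependence in the correction). Once the admissible form is established, the consistency of $\phi$ being an honest automorphism --- i.e. that every map of the stated shape actually preserves all defining relations and is invertible --- has already been asserted in the paragraph preceding the theorem, so I would simply invoke it to close the argument. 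I would structure the final write-up as: (i) reduce to determining $\phi$ on generators; (ii) use discriminant-invariance to fix $\phi(u\#1)$ and split into even/odd cases; (iii) solve the relation-induced linear system for $\phi(v\#1)$; (iv) conclude.
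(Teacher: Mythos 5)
Your opening moves match the paper's: discriminant invariance gives $\phi(u^2\#1) =_{\kk^\times} u^2\#1$ and $\phi(v^2\#1) = (\kappa v^2 + f(u^2))\#1$ (this is Lemma \ref{lem.autcnt}), and the endgame of splitting into an even and an odd branch is right. But there are two genuine gaps. First, you assert that knowing $\phi(u^2\#1) =_{\kk^\times} u^2\#1$ ``pins down $\phi(u\#1)$ up to a scalar and a sign ambiguity.'' It does not: $S$ is noncommutative and a priori $\phi(u\#1)$ is an arbitrary element $a\#1+b\#g+c\#x+d\#gx$ with $a,b,c,d\in A$ of unbounded degree whose square lands on $u^2\#1$. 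The paper needs the $g$- and $x$-commutation relations to constrain weight spaces and to get $x(a)=x(b)=x(c)=0$, $x(d)=-2b$, then a separate degree-by-degree induction (Lemma \ref{lem.abhd}) to show $a$ and $b$ are linear (the key point being that $a_\ell=\pm b_\ell$ propagates downward and contradicts $(a^2-b^2)_2=\alpha u^2$), and only then a classification of the linear parts (Lemma \ref{lem.types}). None of this is a formality, and your proposal skips it.

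Second, and more seriously, your step (iii) claims that applying $\phi$ to the defining relations yields a linear system that forces $\phi(v\#1)$ into the stated shape. The relations do not suffice: they give $c',d'\in A^-$ and identities like $x(c')=\tornado\inv c$, but they do not exclude monomials $u^iv^j$ with $j\geq 2$ even from appearing in $c'$ and $d'$. The paper's Lemma \ref{lem.typei} closes exactly this hole, and it requires a different idea that is absent from your plan: use \emph{surjectivity} of $\phi$ to produce $r$ with $\phi(r)=v\#1$, expand $r$ in normal form, and track the terms of highest odd $v$-degree on both sides to force them to vanish. You correctly flag this bookkeeping as the main obstacle, but the mechanism you propose (weight parity plus the $x$-relations) cannot resolve it. Finally, the paper handles the odd branch not by a parallel computation but by composing with one explicit type II automorphism to reduce to the even case; your outline would benefit from the same reduction rather than redoing the analysis.
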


\section*{Acknowledgements}
The authors wish to thank Frank Moore for helpful conversations and
computational assistance with Macaulay2,
and the referee for a close reading with many suggestions
that have improved the presentation of this article.

\appendix

\section{The restricted automorphism group of $A \# H$}
\label{sec.aut}

In this appendix we provide the necessary computations to prove Theorem \ref{thm.raut}.
We keep the notation defined above.

\begin{lemma}
When extended linearly, $\phi_e$ and $\phi_o$ define automorphisms of $S$.
Moreover, the composition of two even or two odd type automorphisms is an even automorphism,
while the composition of an even with an odd is odd.
\end{lemma}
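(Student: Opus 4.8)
The plan is to work with $S=A\# H$ through its presentation as an iterated Ore extension. As recorded in the proof of Theorem~\ref{thm.disc}, $S\iso A[x;\tau,\delta][g;\tau]/(x^2,g^2-1)$, so $S$ is the $\kk$-algebra on generators $u,v,g,x$ subject to $uv+vu=0$, $g^2=1$, $x^2=0$, $gx+xg=0$, $gu+ug=0$, $gv-vg=0$, $xu+ux=0$, and $xv-vx-u=0$. To show that $\phi_e$ (resp.\ $\phi_o$), defined on these four generators, extends to an algebra endomorphism, I would verify that the prescribed images satisfy each of these eight relations; by the universal property of the presentation this is exactly what is needed. The relations purely among $u,g,x$ (and $gu+ug$) reduce at once to scalar multiples of the originals, so the substantive checks are $gv-vg$, $xv-vx-u$, and $uv+vu$, where the tail $\sum_{i\in I}\beta_i u^i x$ (resp.\ $\sum_{i\in I}\beta_i u^i gx$) interacts with the other generators.

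The key computational inputs are that each $i\in I$ is \emph{odd}, whence $gu^i=(-1)^i u^ig=-u^ig$ and $xu^i=(-1)^i u^ix=-u^ix$, together with the nilpotency $x^2=0$ and its consequences $xgx=(xg)x=-gx^2=0$ and $gxgx=0$. For example, in $\phi_e(xv-vx)$ the tail contributes $\sum_i\beta_i\,xu^ix=-\sum_i\beta_i u^ix^2=0$, while in $\phi_e(gv-vg)$ the tail contributes $\sum_i\beta_i(gu^ix-u^ixg)=0$ precisely because $i$ is odd (so $gu^ix=u^ixg$). The odd-type checks are structurally identical but carry extra grouplike factors; here the relation $xv-vx-u=0$ is what pins down the coefficient $-2$ in $\phi_o(u)=\alpha(ug-2vgx)$: with $\phi_o(v)=\alpha\tornado\inv vg+\cdots$, the leading term of $\phi_o(x)\phi_o(v)-\phi_o(v)\phi_o(x)$ already equals $\alpha(xvg-vgx)=\alpha(ug-2vgx)=\phi_o(u)$, using $xvg=(vx+u)g=-vgx+ug$, and the tail again contributes $0$.

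Next I would establish the composition rules, which will also deliver bijectivity. On $g$ and $x$ every composition simply multiplies the scalars $\varepsilon$ and $\tornado$, so \eqref{eq.raut} is inherited automatically. The content is to confirm that the $u,v$ images of a composite land in the even or odd template with the asserted parity. Computing $\phi\circ\psi$ on $u$ and $v$ in each of the four parity combinations, the $v$-coefficient behaves multiplicatively (producing the new factor $\alpha\tornado\inv$), and the $u^ix$ (resp.\ $u^igx$) tails recombine into a new finite sum over odd exponents — once more using oddness to keep $gu^i=-u^ig$, $xu^i=-u^ix$ and to collapse products such as $u^ix\cdot u^jx$. Tracking the grouplike factors then shows that two maps of equal parity compose to an even map, while an even and an odd map compose to an odd map.

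Bijectivity follows from these rules: the identity is of even type ($\alpha=\tornado=1$, $\varepsilon=1$, $I=\emptyset$), and given any even $\phi_e$ one solves $\phi_e\circ\psi=\id$ for an even $\psi$ by inverting the scalars $\alpha,\tornado,\varepsilon$ and then solving a triangular linear system for the tail coefficients $\beta_i$; the analogous argument handles the odd maps via odd $\circ$ odd $=$ even. Hence each $\phi_e,\phi_o$ is an automorphism. The main obstacle is entirely computational: controlling the sign bookkeeping in the odd-type relation checks and in the odd-involving compositions, where the summand $ug-2vgx$ and the pervasive $gx$ factors generate many cross terms. All of these vanish or recombine cleanly by the three facts $x^2=0$, the oddness of the exponents in $I$, and the commutation signs $gu=-ug$, $xu=-ux$, $gx=-xg$.
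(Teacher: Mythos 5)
Your proposal is correct and follows essentially the same route as the paper: verify that the prescribed images of the generators kill the defining relations of $S$ (the paper calls this ``routine''), obtain bijectivity of even-type maps by producing an explicit even-type inverse (your triangular system yields exactly the paper's formula for $\phi_e^{-1}$), and deduce bijectivity of odd-type maps from the fact that $\phi_o\circ\phi_o$ is of even type. The only difference is presentational — you organize the relation checks via the iterated Ore extension presentation and spell out the sign bookkeeping that the paper leaves implicit.
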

\begin{proof}Let $\phi_e$ and $\phi_o$ be maps of even and odd type, respectively. It is routine to check that $\phi_e$ and $\phi_o$ map the relations $(u\# 1)(v\# 1) + (v\# 1)(u\# 1)$, $(1\#g)(u\# 1) + (u\# 1)(1\#g)$, $(1\#g)(v\# 1) - (v\# 1)(1\#g)$, and $(v\# 1)(1\#x) - (1\#x)(v\# 1) + u\# 1$ to zero, so $\phi_e$ and $\phi_o$ give well-defined endomorphisms of $S$. It remains to show that $\phi_e$ and $\phi_o$ are bijective.
But $\phi_e$ has an inverse of even type given by
\begin{align*} &\phi_e^{-1} (u\#1) = \alpha^{-1} (u \# 1) & 
&\phi_e^{-1} (v \# 1) = \alpha^{-1}  \left(\tornado v \# 1 - \alpha \sum_{i \in I} \beta_i \alpha^{-i} u^i \# x \right) \\
&\phi_e^{-1} (1 \# g) = \varepsilon \# g & 
&\phi_e^{-1} (1 \# x) = \tornado \inv \# x
\end{align*}
so $\phi_e$ is bijective.
It is easy to see that the composition of two even or two odd maps gives an even map and the composition of an even with an odd gives an odd map. Now, since $\phi_o \circ \phi_o$ is a map of even type, it is bijective. Hence, $\phi_o$ is bijective. 
\end{proof}

Our goal is to prove that every automorphism in $\rAut(S)$ is either of even or odd type.
To do this we apply our discriminant computations above.

\begin{lemma} \label{lem.autcnt} Let $\phi \in \Aut(S)$, then $\phi(u^2\#1) =_{\kk^\times} u^2 \# 1$ and
$\phi(v^2\#1) = (\kappa v^2 + f(u^2))\#1$ for some $\kappa \in \kk^\times$ and some $f \in \kk[y]$.
\end{lemma}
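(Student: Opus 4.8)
The plan is to use the central invariance of the discriminant under automorphisms, exactly as in the approach signaled by the phrase ``we apply our discriminant computations above.'' The key observation is that any $\phi \in \Aut(S)$ restricts to an automorphism of $\cnt(S)$, and by Corollary~\ref{cor.qpcnt} we have $\cnt(S) = A^H = \kk[u^2, v^2]$ (specializing $n=2$). So the first step is to reduce the statement to understanding the induced automorphism $\phi|_{\cnt(S)}$ of the polynomial ring $\kk[u^2, v^2]$.

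To pin down $\phi|_{\cnt(S)}$, I would invoke the fact that the discriminant $d(S/\cnt(S))$ is preserved, up to a unit, by any algebra automorphism: if $\phi \in \Aut(S)$ then $\phi$ permutes a $\cnt(S)$-basis and hence $\phi(d(S/\cnt(S))) =_{\kk^\times} d(S/\cnt(S))$. By Theorem~\ref{thm.disc}, with $n=2$, we have $d(S/\cnt(S)) =_{\kk^\times} u^{2 \cdot 2^4 \cdot 1} = u^{32}$, so $\phi(u^{32}) =_{\kk^\times} u^{32}$ inside $\cnt(S) = \kk[u^2, v^2]$. Since $\kk[u^2,v^2]$ is a polynomial ring (a UFD) and $u^{32} = (u^2)^{16}$ with $u^2$ irreducible, this forces $\phi(u^2) =_{\kk^\times} u^2$, which is the first assertion. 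For the second assertion, I would write $\phi(v^2 \# 1)$ as a general element of $\kk[u^2,v^2]$, say $\phi(v^2) = \sum_{i,j} c_{ij} (u^2)^i (v^2)^j$; because $\phi$ is an automorphism fixing $u^2$ up to scalar, the induced map on $\kk[u^2,v^2]$ is an automorphism of the polynomial ring fixing the variable $u^2$, and any such automorphism sends $v^2$ to $\kappa v^2 + f(u^2)$ for a scalar $\kappa \in \kk^\times$ and a polynomial $f$. This is precisely the claimed form.

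The main obstacle I expect is justifying that $\phi|_{\cnt(S)}$ is genuinely an automorphism of the polynomial ring $\kk[u^2,v^2]$ and not merely an injective endomorphism, together with ruling out the possibility that $\phi(v^2)$ involves $v^2$ to a higher power. The degree bound must come from the fact that $\phi^{-1}$ also fixes $u^2$ up to scalar and sends $v^2$ to a polynomial, so composing gives the identity; comparing $v^2$-degrees on both sides of $\phi^{-1}(\phi(v^2)) = v^2$ forces the $v^2$-degree of $\phi(v^2)$ to be exactly one, yielding $\phi(v^2) = \kappa v^2 + f(u^2)$. An alternative route for the degree control is a filtration/grading argument using $\deg u = \deg v = 1$: $\phi$ should preserve the associated graded structure well enough that $\phi(v^2)$ cannot acquire higher $v^2$-degree. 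Either way, the technical heart is the combination of the UFD structure of $\cnt(S)$ with the automorphism-invariance of the discriminant to force $\phi(u^2) =_{\kk^\times} u^2$, after which the shape of $\phi(v^2)$ follows from elementary polynomial automorphism theory.
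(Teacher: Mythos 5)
Your proposal is correct and follows essentially the same route as the paper: the paper likewise invokes the automorphism-invariance of the discriminant ideal (citing \cite[Lemma 1.8]{CPWZ1} rather than your basis-change argument), the value $d(S/\cnt(S)) =_{\kk^\times} u^{32}$ from Theorem~\ref{thm.disc}, and the UFD structure of $\cnt(S)=\kk[u^2,v^2]$ to force $\phi(u^2)=_{\kk^\times}u^2$, with the shape of $\phi(v^2)$ then following from elementary polynomial-automorphism considerations (which the paper simply declares ``clear''). Your explicit degree argument via $\phi^{-1}$ fills in that last step correctly.
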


\begin{proof}
If $\phi \in \Aut(S)$, then $\phi$ preserves $\cnt(S)$ and hence
by \cite[Lemma 1.8]{CPWZ1}, $\phi$ preserves the ideal generated by $d(S/\cnt(S))$.
By Theorem \ref{thm.disc}, $d(S/\cnt(S))=_{\kk^\times} u^{32}$.
But $\cnt(S) = \kk[u^2,v^2]$ is a domain and so $\phi(u^2\#1) =_{\kk^\times} u^2 \# 1$.
The second claim is clear because $\phi$ must restrict to an automorphism of $\cnt(S)$.
\end{proof}

Unfortunately, the discriminant gives us no information on the Taft algebra side and
so we do not expect to be able to compute the full automorphism group from this information alone.


\begin{hypothesis}
\label{hyp.auto}
For the remainder of the section, we write
\begin{align*}
&\phi(u\#1) = a \# 1 + b \# g + c \# x + d \# gx & &\phi(v\#1) = a' \# 1 + b' \# g + c' \# x + d' \# gx \\
&\phi(1\#g) = \varepsilon \# g & &\phi(1 \#x ) = \tornado \# x
\end{align*}
where $a, b, c, d, a' , b', c', d' \in \kk_{-1}[u,v]$, $\varepsilon = \pm 1$, and $\tornado \in \kk^\times$.
\end{hypothesis}

Because $g$ and $x$ act linearly on $A$, then $S$ is $\NN$-graded and so
we write $\phi(s)_m$ to indicate the degree $m$ component of the image of $s$ under $\phi$.
We will use similar notation for the degree components of coefficients as well.
Our first observation regards the weight spaces for the coefficients of $\phi(u\#1)$
and $\phi(v\#1)$. There are only two in this case and so we denote them by $A^+ = A(1)$
and $A^- = A(-1)$.

\begin{lemma}
\label{lem.wtsp}
Let $\phi \in \rAut(S)$.
Then $a',b',c,d \in A^+$ and $a,b,c',d' \in A^-$.
\end{lemma}

\begin{proof}
We have
\begin{align*}
0 	&= \phi((1\#g)(u\#1) + (u\#1)(1\#g)) \\
	&= (\varepsilon\#g)(a \# 1 + b \# g + c \# x + d \# gx) + (a \# 1 + b \# g + c \# x + d \# gx)(\varepsilon\#g) \\
    &= \varepsilon \left[(g(b)+b)\#1 + (g(a)+a)\#g + (g(d)-d)\#x + (g(c)-c)\#gx\right].
\end{align*}
Thus, $a,b \in A^-$ and $c,d \in A^+$. Similarly, $a',b' \in A^+$ and $c',d' \in A^-$.
\end{proof}

Next we will show that the automorphisms are unipotent, that is,
the coefficients have no constant terms.

\begin{lemma}
Let $\phi \in \rAut(S)$. Then $\phi(u\#1)_0=\phi(v\#1)_0=0$.
\end{lemma}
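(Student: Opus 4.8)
The plan is to read off the degree-$0$ parts from Lemma~\ref{lem.wtsp} and then kill them using the defining relations of $S$ together with the centrality of $\phi(v^2\#1)$. Write $P=\phi(u\#1)$ and $Q=\phi(v\#1)$. By Lemma~\ref{lem.wtsp} we have $a,b,c',d'\in A^-$ and $a',b',c,d\in A^+$; since $A^-$ consists of the odd-$u$-degree elements it contains no nonzero constant, so $a_0=b_0=c'_0=d'_0=0$, and the degree-$0$ components are $\phi(u\#1)_0=c_0(1\#x)+d_0(1\#gx)$ and $\phi(v\#1)_0=a'_0(1\#1)+b'_0(1\#g)$. First I would apply $\phi$ to the relation $(v\#1)(1\#x)-(1\#x)(v\#1)+u\#1=0$ and extract the degree-$0$ part, which gives $c_0=0$ and $d_0=-2\tornado b'_0$. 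Hence both degree-$0$ parts vanish as soon as $a'_0=b'_0=0$, reducing the lemma to this claim. Throughout I would use the relations $PQ+QP=0$, $(1\#g)P+P(1\#g)=0$, and $(1\#g)Q=Q(1\#g)$ (images of $uv+vu=0$, $gu+ug=0$, $gv-vg=0$), together with the fact that $Q^2=\phi(v^2\#1)$ is central and, by Corollary~\ref{cor.qpcnt}, $\cnt(S)=\kk[u^2,v^2]\subseteq A\#1$, so $Q^2\in A\#1$.

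For $a'_0$ the argument is short, because $a'_0(1\#1)$ is central. If $a'_0\neq 0$, then comparing degree-$0$ components in $Q^2\in A\#1$ forces the $\#g$-coefficient $2a'_0b'_0$ to vanish, so $b'_0=0$, whence $d_0=0$ and $P$ has no degree-$0$ part; as $\phi$ is injective $P\neq 0$, so its lowest-degree component $P_\ell$ lies in some degree $\ell\geq 1$. Expanding $0=PQ+QP=2a'_0P+(PQ_{\geq 1}+Q_{\geq 1}P)$ and noting that every term of $PQ_{\geq 1}+Q_{\geq 1}P$ has degree $>\ell$, the degree-$\ell$ component of the left side is $2a'_0P_\ell\neq 0$, a contradiction. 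Hence $a'_0=0$.

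The hard part will be $b'_0=0$, and this is where I expect the real obstacle. Once $a'_0=0$, the element $Q_0=b'_0(1\#g)$ is a \emph{unit} rather than a central element, and because $(1\#g)P=-P(1\#g)$ one checks that $PQ_0+Q_0P=0$ identically; thus $PQ+QP=0$ carries no information about $b'_0$ in the bottom degree, and I must instead exploit the centrality of $Q^2$. Assuming $b'_0\neq 0$, I would prove by induction on $m$ that every homogeneous piece satisfies $Q_m=b'_m(1\#g)$ with $x(b'_m)=0$. Granting this in all degrees $<m$, the cross terms $Q_iQ_{m-i}$ ($1\le i\le m-1$) already lie in $A\#1$, so requiring the $\#g,\#x,\#gx$-components of the degree-$m$ part of $Q^2$ to vanish forces $a'_m=c'_m=d'_m=0$, i.e.\ $Q_m=b'_m(1\#g)$ with $b'_m\in A^+$; then reading the degree-$m$ component of $PQ+QP=0$ (the middle anticommutators vanish since $x(b'_i)=0$, and $P_mQ_0+Q_0P_m=b'_0(P_m(1\#g)+(1\#g)P_m)=0$) leaves exactly $-2\tornado b'_0\,g(x(b'_m))\#1=0$, so $x(b'_m)=0$. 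Summing over $m$ gives $Q=b'(1\#g)$ with $b'\in A^{\langle x\rangle}\cap A^+=\kk[u^2,v^2]$, using $A^{\langle x\rangle}=\kk[u,v^2]$ from Lemma~\ref{lem.fring}.

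Finally I would derive a contradiction. Since $b'\in\kk[u^2,v^2]$, the element $b'\#1$ is central (Corollary~\ref{cor.qpcnt}), so applying $\phi^{-1}$ to $Q=(b'\#1)(1\#g)$ yields $v\#1=\phi^{-1}(b'\#1)(\varepsilon\#g)$ with $\phi^{-1}(b'\#1)\in\cnt(S)=\kk[u^2,v^2]\#1$; hence $v\#1\in A\#g$, which is impossible. Therefore $b'_0=0$, completing the proof. The only genuinely delicate point is the induction of the third paragraph: the identity $PQ_0+Q_0P=0$ renders the bottom degree useless, and it is precisely the strong constraint $Q^2\in A\#1$ that propagates the needed rigidity up through all degrees.
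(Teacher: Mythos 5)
Your proof is correct, but it reaches the key conclusion $a'_0=b'_0=0$ by a genuinely different route than the paper. Both arguments begin identically: the weight-space lemma kills $a_0,b_0,c'_0,d'_0$, and the degree-$0$ part of $\phi\bigl((v\#1)(1\#x)-(1\#x)(v\#1)+(u\#1)\bigr)=0$ gives $c_0=0$ and $d_0=-2\tornado b'_0$. The paper then finishes in a few lines: the degree-$0$ component of the central element $\phi(v^2\#1)$ forces $a'_0b'_0=0$, and the vanishing of its degree-$1$ component then forces $a'_1=b'_1=c'_1=d'_1=0$ whenever one of $a'_0,b'_0$ is nonzero, contradicting surjectivity of $\phi$. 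You instead avoid any appeal to surjectivity in degree one: for $a'_0$ you run a clean lowest-degree argument on $PQ+QP=0$ (using that $a'_0\#1$ is central and that $P_0=0$ once $b'_0=0$), and for $b'_0$ you prove inductively that $Q_m=b'_m\#g$ with $b'_m\in A^+\cap A^{\langle x\rangle}=\kk[u^2,v^2]$, so that $Q=(b'\#1)(1\#g)$ with $b'\#1$ central, and then derive the contradiction $v\#1\in A\#g$ by applying $\phi^{-1}$. I checked the delicate points of your induction: the cross terms $Q_iQ_{m-i}$ land in $A\#1$, the $\#g,\#x,\#gx$ components of $(Q^2)_m$ are $2b'_0a'_m$, $-2\tornado^0\cdot 2b'_0d'_m$-type multiples forcing $a'_m=c'_m=d'_m=0$, the middle anticommutators $P_iQ_j+Q_jP_i$ vanish because $b'_j\#1$ is central and $(1\#g)P+P(1\#g)=0$, and the surviving term $P_0Q_m+Q_mP_0=-2\tornado b'_0\,g(x(b'_m))\#1$ closes the induction. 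So the argument is sound. What your approach buys is self-containedness (the paper's surjectivity step is terse and requires its own justification) and it injectivity/centrality arguments that are robust; what it costs is length --- the paper extracts the same rigidity from just the degree-$0$ and degree-$1$ homogeneous components of the single central element $\phi(v^2\#1)$, whereas your induction essentially re-proves, degree by degree, a rigidity statement that is only needed in the bottom degree.
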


\begin{proof}
By Lemma \ref{lem.wtsp}, $a,b,c',d' \in A^-$. However, any constant is in the positive
weight space and so we conclude that $a_0=b_0=c'_0=d'_0=0$.
Now $\phi(v^2\#1)_0 = ((a')_0^2+(b')_0^2)\# 1 + 2(a')_0(b')_0 \# g$.
Since $\phi(v^2\#1) \in \kk[u^2,v^2]$, then $(a')_0=0$ or $(b')_0=0$. 

Next we look at the degree 1 component. Recall that $\phi(v^2\#1)_1=0$.
\begin{align*}
\phi(v^2\#1)_1 
    &= (2a_0'a_1' + 2b_0'b_1') \# 1
    	+ (2a_0'b_1' + 2b_0'a_1') \# g
        + (2a_0'c_1' - 2b_0'd_1') \#x 
        + (2a_0'd_1' - 2b_0'c_1') \# gx.  
\end{align*}
If $a'_0\neq 0$, then $b'_0=0$ and this forces 
$a_1'=b_1'=c_1'=d_1'=0$ contradicting the surjectivity of $\phi$.
A similar argument holds if $b'_0 \neq 0$.
Thus, $a_0'=b_0'=0$. Furthermore,
\[ 0 = \phi((v\#1)(1\#x)-(1\#x)(v\#1)+(u\#1))_0 = c_0\#x + (2\tornado b_0' + d_0)\# gx.\]
Therefore, $c_0=0$ and $d_0=-2\tornado b'_0=0$.
\end{proof}

Now we show that $a$ and $b$ cannot have higher degree components. 

\begin{lemma}
\label{lem.abhd}
Let $\phi \in \rAut(S)$ and write $\phi(u\#1)$ as above. Then $a_k=b_k=0$ for $k>1$.
\end{lemma}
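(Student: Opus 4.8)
The plan is to extract from Lemma~\ref{lem.autcnt} the single strong constraint $\phi(u\#1)^2=\phi(u^2\#1)=\zeta\,u^2\#1$ for some $\zeta\in\kk^\times$, and to convert it into a factorization of $u^2$ inside the domain $A=\kk_{-1}[u,v]$. Writing $\phi(u\#1)=a\#1+b\#g+c\#x+d\#gx$ as in Hypothesis~\ref{hyp.auto}, and recalling $g(a)=-a$, $g(b)=-b$ (since $a,b\in A^-$ by Lemma~\ref{lem.wtsp}), I would collect the $\#1$- and $\#g$-components of $\phi(u\#1)^2$ to get
\begin{align*}
a^2+b\,g(b)+c\,x(a)+d\,g(x(b))=\zeta\,u^2,\qquad ab+b\,g(a)+c\,x(b)+d\,g(x(a))=0.
\end{align*}
The obstacle is immediate: these identities couple the $x$-free coefficients $a,b$ with the nilpotent-part coefficients $c,d$ through the terms $c\,x(a),\,d\,x(b),\,c\,x(b),\,d\,x(a)$, so the squared relation does not isolate $a$ and $b$ on its own. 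Note that one cannot simply project $S$ onto the skew group algebra by killing $x$, since $x$ acts nontrivially on $A$ and the two-sided ideal generated by $1\#x$ already contains $u\#1$.

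To break the coupling I would bring in a second relation: $1\#x$ and $u\#1$ anticommute in $S$. Indeed a direct computation using $g(u)=-u$ and $x(u)=0$ gives $(1\#x)(u\#1)=-u\#x=-(u\#1)(1\#x)$. Since $\phi(1\#x)=\tornado\#x$ with $\tornado\in\kk^\times$, applying $\phi$ yields $(1\#x)\phi(u\#1)+\phi(u\#1)(1\#x)=0$; expanding and comparing the $\#1$- and $\#g$-components gives exactly $x(a)=0$ and $x(b)=0$. Feeding these back collapses every cross term above, leaving the clean pair
\begin{align*}
a^2-b^2=\zeta\,u^2\qquad\text{and}\qquad ab=ba,
\end{align*}
which now live entirely in the domain $A$.

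Finally I would finish with a degree argument. Since $a$ and $b$ commute, I may factor $(a+b)(a-b)=a^2-b^2=\zeta\,u^2$. By the preceding lemma $a$ and $b$ have no constant term, so both $a+b$ and $a-b$ have lowest homogeneous degree at least $1$; because $A$ is a domain, the lowest- and highest-degree components of the product are the products of the corresponding components of the factors and are nonzero. As the product equals the homogeneous degree-$2$ element $\zeta\,u^2$, each factor must be concentrated in degree $1$. Hence $a=\tfrac12\big((a+b)+(a-b)\big)$ and $b=\tfrac12\big((a+b)-(a-b)\big)$ are homogeneous of degree $1$, giving $a_k=b_k=0$ for all $k>1$, as claimed. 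I expect the only genuine subtlety to be the decoupling step; once $x(a)=x(b)=0$ is secured from the anticommutation relation, the factorization and degree bookkeeping are routine.
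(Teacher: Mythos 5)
Your proof is correct, and its skeleton coincides with the paper's: both arguments first use the anticommutation $(u\#1)(1\#x)+(1\#x)(u\#1)=0$ together with $\phi(1\#x)=\tornado\#x$ to extract $x(a)=x(b)=0$, and then feed this into the constraint $\phi(u^2\#1)=_{\kk^\times}u^2\#1$ from Lemma~\ref{lem.autcnt} to reduce everything to the single equation $a^2-b^2=\zeta u^2$ with $ab=ba$ (the paper gets the commutativity from the parity of the $v$-degrees of the monomials of $a,b$; your derivation from the vanishing $\#g$-component of the square is equally valid). Where you genuinely diverge is the endgame: the paper runs a descending induction on homogeneous components, showing $a_m=\pm b_m$ propagates down to $a_1=\pm b_1$ and contradicting $(a^2-b^2)_2=\alpha u^2\neq 0$, whereas you factor $a^2-b^2=(a+b)(a-b)$ and observe that in the graded domain $A=\kk_{-1}[u,v]$ a product of two elements with no constant term (guaranteed by the unipotence lemma) can only be homogeneous of degree $2$ if each factor is homogeneous of degree $1$. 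Your factorization argument is shorter and cleaner --- it avoids the case split on the sign and the bookkeeping of the induction --- at the mild cost of having to justify that $a+b$ and $a-b$ are both nonzero (immediate, since their product is $\zeta u^2\neq 0$) and that lowest- and highest-degree components multiply faithfully in a graded domain. Both proofs use exactly the same external inputs, so nothing is lost in generality.
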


\begin{proof}
We have
\[ 0 = \phi((u\#1)(1\#x)+(1\#x)(u\#1)) = \tornado \left(x(a)\#1 + x(b)\#g + x(c)\#x + (2b+x(d))\#gx\right). \]
Thus, $x(a)=x(b)=x(c)=0$ and $x(d) = -2b$.
Combining this with our computations above we have
\begin{align}
\label{eq.u2}
\phi(u^2\#1) = (a^2-b^2)\# 1 + (ab-ba)\#g
	+ (ac-ca + bd+db)\#x + (ad+da+bc-cb)\#gx.
\end{align}
Since $x(a)=x(b)=0$, then the $v$-degrees of all monomial summands
in both $a$ and $b$ are even. Thus, these monomials commute with one another.


Write $a = a_1 + a_2 + \cdots + a_m$ and $b = b_1 + b_2 + \cdots + b_{m'}$ where $a_i,b_i \in A_i$. 
Assume $\max\{m,m'\} > 1$. Since $(a^2-b^2)_{> 2}=0$ it follows that $m=m'$. Then
\[ 0 = (a^2-b^2)_{2m} = (a_m)^2-(b_m)^2. \]
Thus, $a_m = \pm b_m$. 
Consider the case $a_m=b_m$ and assume inductively that for some $\ell$,
$1 \leq \ell \leq d$, $a_k=b_k$ for all $\ell < k \leq d$. 
Then $a_ia_j - b_ib_j = 0$ for all $\ell < i,j < n$.
\[ 0 = (a^2-b^2)_{n+\ell} 
	= \sum_{i+j=\ell} a_ia_j - b_ib_j
    = 2a_na_\ell - 2b_nb_\ell \sum_{\substack{\ell < i,j < n \\ i+j=\ell}} a_ia_j - b_ib_j
    = 2a_n(a_\ell - b_\ell).
\]
Thus, $a_\ell = b_\ell$.
It follows from induction that $a_1=b_1$.
A similar proof in the negative case shows that $a_1=-b_1$.
But this contradicts $(a^2-b^2)_2 = (a_1^2-b_1^2) = \alpha u^2$ for some $\alpha \in \kk^\times$.
Therefore, $m,m' \leq 1$.
\end{proof}

We next determine the affine restricted automorphisms of $S$.
By the grading, this is equivalent to computing the linear parts of {\it any} restricted automorphism.

\begin{lemma}
\label{lem.types}
Suppose $\phi \in \rAut(S)$ is affine.
Let $\phi(1 \# g) = \varepsilon \# g$ and  $\phi(1 \# x) = \tornado \# x$
for some $\varepsilon = \pm 1$, $\tornado \in \kk^\times$.
Then $\phi( u \# 1)$ and $\phi(v\#1)$ take one of the two forms below
with $\alpha, \beta \in \kk$, $\alpha \neq 0$,
\begin{align*}
&\underline{\text{Type I}} ~~				
	&	&\underline{\text{Type II}}	 \\
&\phi(u\#1) = \alpha \left( u \# 1 \right) 	
	&	&\psi(u\#1) = \alpha \left( u\#g - 2 v \# gx\right) \\
&\phi(v\#1) = \alpha \left( \tornado \inv v \# 1 + \beta u \# x \right) 	
	&	&\psi(v\#1) = \alpha \left( \tornado \inv v\#g + \beta u \# gx \right).
\end{align*}
\end{lemma}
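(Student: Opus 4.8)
The plan is to determine the eight scalars hidden in the coefficients of $\phi(u\#1)$ and $\phi(v\#1)$ by pushing the defining relations of $S$, together with the discriminant constraint of Lemma~\ref{lem.autcnt}, through the smash-product multiplication. First I would record the shape of the coefficients. Since $\phi$ is affine, the unipotent lemma makes $\phi(u\#1)$ and $\phi(v\#1)$ homogeneous of degree one for the grading with $\deg(u\#1)=\deg(v\#1)=1$ and $\deg(1\#g)=\deg(1\#x)=0$. Intersecting this with the weight-space restrictions of Lemma~\ref{lem.wtsp} collapses each coefficient to a one-dimensional space: writing $\phi(u\#1)=a\#1+b\#g+c\#x+d\#gx$ and $\phi(v\#1)=a'\#1+b'\#g+c'\#x+d'\#gx$, one gets $a,b\in\kk u$, $c,d\in\kk v$ and $a',b'\in\kk v$, $c',d'\in\kk u$. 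I set $a=pu$, $b=qu$, $a'=p'v$, $b'=q'v$, $c'=r'u$, $d'=s'u$ for scalars $p,q,p',q',r',s'$.

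Next I would invoke two of the relations defining $S$. Feeding $(u\#1)(1\#x)+(1\#x)(u\#1)=0$ through the multiplication (as in the proof of Lemma~\ref{lem.abhd}, using $x(u)=0$ and $x(v)=u$) gives $x(c)=0$ and $x(d)=-2b$, hence $c=0$ and $d=-2qv$; thus $\phi(u\#1)=pu\#1+qu\#g-2qv\#gx$. Applying $(v\#1)(1\#x)-(1\#x)(v\#1)+u\#1=0$ then links the two images, and comparing the $\#1$- and $\#g$-components yields $p=\tornado p'$ and $q=\tornado q'$; this relation imposes no condition on $c'$ or $d'$.

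The decisive input is Lemma~\ref{lem.autcnt}. Squaring, the cross terms in $\phi(u\#1)^2$ cancel by $uv=-vu$, so that $\phi(u^2\#1)=(p^2-q^2)u^2\#1$ via \eqref{eq.u2}; since $\phi(u^2\#1)=_{\kk^\times}u^2\#1$ this forces $p^2\neq q^2$. The heart of the argument is $\phi(v^2\#1)=\phi(v\#1)^2$: expanding all sixteen products, and carefully carrying $\Delta(gx)=gx\tensor g+1\tensor gx$, the sign in $xg=-gx$, and $uv=-vu$, the $\#x$- and $\#gx$-components cancel identically while the $\#g$-component reduces to $\bigl(2p'q'\,v^2+(q'r'-p's')u^2\bigr)\#g$. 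Because Lemma~\ref{lem.autcnt} puts $\phi(v^2\#1)$ inside $A\#1$, this component must vanish, giving $p'q'=0$ and $q'r'=p's'$.

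It then remains to split on $p'q'=0$. If $q'=0$ then $q=0$, so $p\neq0$ by $p^2\neq q^2$, whence $p'\neq0$ and $s'=0$; this is Type~I with $\alpha=p$ and $\beta=r'/p$. If $p'=0$ then $p=0$, so $q\neq0$, $q'\neq0$ and $r'=0$; this is Type~II with $\alpha=q$ and $\beta=s'/q$. I expect the main obstacle to be the bookkeeping in the third paragraph: the smash product entangles the $H$-side (through the coproduct of $gx$ and the $xg=-gx$ sign) with the anticommutation $uv=-vu$, and the entire dichotomy rests on computing the $\#g$-component of $\phi(v\#1)^2$ without sign errors.
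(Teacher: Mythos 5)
Your proposal is correct and follows essentially the same route as the paper: after using the affine/unipotent/weight-space reductions to get the eight-scalar shape, the paper likewise feeds the relation $(v\#1)(1\#x)-(1\#x)(v\#1)+(u\#1)=0$ through $\phi$ (obtaining exactly your constraints $\alpha_3=0$, $\alpha_4=-2\alpha_2$, $\alpha_i=\tornado\beta_i$), then computes $\phi((v\#1)^2)$, whose $\#g$-component $(\beta_2\beta_3-\beta_1\beta_4)u^2+2\beta_1\beta_2v^2$ matches your $(q'r'-p's')u^2+2p'q'v^2$, and splits into the same two cases using the nonvanishing of $\phi((u\#1)^2)$. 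Your only deviations — extracting $c=0$, $d=-2qv$ from $(u\#1)(1\#x)+(1\#x)(u\#1)=0$ first, and invoking Lemma~\ref{lem.autcnt} for $p^2\neq q^2$ rather than injectivity — are cosmetic.
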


\begin{proof}
By Hypothesis \ref{hyp.auto} and Lemma \ref{lem.wtsp},
\begin{align*}
\phi(u\#1) &=\alpha_1 u\#1 + \alpha_2 u\#g + \alpha_3 v\#x + \alpha_4 v \#gx \\
\phi(v\#1) &=\beta_1 v\#1 + \beta_2 v\#g + \beta_3 u\#x + \beta_4 u\#gx.
\end{align*}
We have
\begin{align*}
0 	&= \phi((v\#1)(1\#x)-(1\#x)(v\#1)+(u\#1)) \\
	&= (\alpha_1-\tornado \beta_1)u\#1 + (\alpha_2-\tornado \beta_2)u\#g
	+ \alpha_3 v\#x + (\alpha_4+2\tornado\beta_2)v\#gx.
\end{align*}
Thus, $\alpha_3=0$, $\alpha_i=\tornado\beta_i$, $i=1,2$,
and $\alpha_4=-2\tornado\beta_2$. Furthermore,
\[ \phi((v\#1)^2)
= \left(\beta_1\beta_3 - \beta_2\beta_4)u^2
	+ (\beta_1^2+\beta_2^2)v^2\right)\#1
	+ \left(\beta_2\beta_3-\beta_1\beta_4)u^2 
    + 2\beta_1\beta_2v^2\right)\#g
\in \kk[u^2,v^2].
\]
Thus, $\beta_1=0$ or $\beta_2=0$.

If $\beta_2=\alpha_2=0$, then $\alpha_4=0$ by above.
Then $\alpha_1,\beta_1 \neq 0$, so $\beta_4=0$.

If $\beta_1=\alpha_1=0$, then $\phi((u\#1)^2) = (\tornado \beta_2)^2 u^2$,
so $\beta_2\neq 0$ forcing $\beta_3=0$.
\end{proof}

We say an automorphism $\phi \in \rAut(S)$ is of type I (resp. type II)
if its linear part is of type I (resp. type II) in Lemma \ref{lem.types}.
Observe that the composition of two automorphisms of type I or two automorphisms of type II yields an automorphism of type I, while the composition of a type I automorphism with a type II automorphism is of type II. We will show that if $\phi \in \rAut(S)$ is of type I (resp. type II), then it is even (resp. odd).

\begin{lemma}
If $\phi \in \rAut(S)$ is of type I, then $a=\alpha u$ and $b=d=0$.
Moreover, $b',c \in \cnt(S)$ and $a' = \alpha \tornado\inv v - c/2$.
\end{lemma}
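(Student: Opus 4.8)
The plan is to leverage the structural constraints already accumulated for a type I automorphism together with the defining relations of $S$, and to extract the conclusions one coefficient at a time. Recall that for a type I automorphism the linear part is $\phi(u\#1)_1 = \alpha u\#1$ and $\phi(v\#1)_1 = \alpha(\tornado\inv v\#1 + \beta u\#x)$. By Lemma \ref{lem.abhd} we already know $a_k=b_k=0$ for $k>1$, and by the preceding lemma the degree-zero parts all vanish, so $a=a_1$ and $b=b_1$ are purely linear. Matching against the type I linear part forces $a=\alpha u$ and $b=0$ immediately. The vanishing of $d$ should then follow from the relation $x(d)=-2b$ established inside the proof of Lemma \ref{lem.abhd}: with $b=0$ we get $x(d)=0$, and combined with the weight-space constraint $d\in A^+$ from Lemma \ref{lem.wtsp} and the fact that $d$ has no constant or higher-degree obstruction, one concludes $d=0$.

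Next I would turn to $b'$ and $c$. The natural tool is the relation $\phi((u\#1)(1\#x)+(1\#x)(u\#1))=0$, which in the proof of Lemma \ref{lem.abhd} yielded $x(a)=x(b)=x(c)=0$. So $c\in A^{\langle x\rangle}=\kk[u,v^2]$ by Lemma \ref{lem.fring}; together with $c\in A^+$ from Lemma \ref{lem.wtsp} this should pin $c$ down to lie in the center $\cnt(S)=\kk[u^2,v^2]$. For $b'$ I would use the analogous commutator relation coming from $\phi((v\#1)(1\#g)-(1\#g)(v\#1))$ or, more directly, apply the $g$- and $x$-stability conditions to the $\#g$-coefficient $b'$, showing $x(b')=0$ and $g(b')=b'$, hence $b'\in A^H\subseteq\cnt(S)$.

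Finally, to obtain $a'=\alpha\tornado\inv v - c/2$ I would expand $\phi(v^2\#1)$ using Hypothesis \ref{hyp.auto} and force it into $\cnt(S)=\kk[u^2,v^2]$ as dictated by Lemma \ref{lem.autcnt}. The cross terms involving $a'$ and $c'$ (or $c$) produce a $\#x$ or $\#gx$ component that must vanish; setting that component to zero gives a linear relation between $a'$, $v$, and $c$ whose solution is the stated expression. The main obstacle I anticipate is bookkeeping: keeping straight which coefficients live in $A^+$ versus $A^-$, correctly applying the smash-product multiplication with its cocommutator twists, and ensuring that the $\lambda=-1$, $n=2$ specialization of the Taft relations is used consistently so that signs work out. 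Once the weight-space dichotomy from Lemma \ref{lem.wtsp} and the derivation identity $x(d)=-2b$ are in hand, each conclusion should reduce to solving a small linear system in the coefficients, so I expect no genuinely deep difficulty beyond careful computation.
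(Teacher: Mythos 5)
Your outline works for $a$, $b$, and $c$, but two of the four conclusions have genuine gaps. First, your argument for $d=0$ does not close. From $x(d)=-2b=0$ and $d\in A^{+}$ you only get $d\in\kk[u^2,v^2]$, and nothing you invoke rules out, say, $d=u^2$: there is no ``higher-degree obstruction'' available, since Lemma~\ref{lem.abhd} bounds the degrees of $a$ and $b$ only, not those of $c$ and $d$. The missing input is Lemma~\ref{lem.autcnt}: because $\phi(u^2\#1)=_{\kk^\times}u^2\#1$, the $\#gx$-component of $\phi(u\#1)^2$ in \eqref{eq.u2} must vanish; once $b=0$ and $d$ is known to be central that component is $ad+da=2ad$, and since $a=\alpha u\neq 0$ and $A$ is a domain, $d=0$.

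Second, the identity $a'=\alpha\tornado\inv v-c/2$ cannot be extracted from $\phi(v^2\#1)$ as you propose: that element equals $\phi(v\#1)^2$ and therefore involves only $a',b',c',d'$ — the coefficient $c$ of $\phi(u\#1)$ never appears — so no amount of forcing it into $\cnt(S)$ can produce the $-c/2$ term. Two other inputs are needed. (i) Apply $\phi$ to the relation $(v\#1)(1\#x)-(1\#x)(v\#1)+(u\#1)=0$: its $\#1$-component gives $x(a')=\tornado\inv a=\tornado\inv\alpha u$, whence $a'=\alpha\tornado\inv v+z$ with $z\in\cnt(A)$ (its $\#g$-component is also what actually yields $x(b')=\tornado\inv b=0$; the $v$--$g$ relation you cite only reproduces the weight-space constraint of Lemma~\ref{lem.wtsp}). (ii) Apply $\phi$ to $(u\#1)(v\#1)+(v\#1)(u\#1)=0$: the identity component reads $aa'+a'a+ca=(\alpha u)(2z+c)=0$, forcing $z=-c/2$. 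This mixed $u$--$v$ relation is indispensable, since it is the only one linking the coefficients of $\phi(u\#1)$ with those of $\phi(v\#1)$.
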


\begin{proof}
The statement regarding $a$ and $b$ follows from Lemma \ref{lem.abhd}.
The coefficient in \eqref{eq.u2} is $0=ad+da=2ad$. Since $a\neq 0$, then $d=0$.

For the remainder, observe that if $r \in A^+$ then the $u$-degree of each summand is even.
If furthermore $x(r)=0$, then the $v$-degree of each summand is even. 
Consequently, if $r \in A^+$ and $x(r)=0$, then $r \in \kk[u^2,v^2]=\cnt(A)$.
Thus $c \in \cnt(S)$.

We have
\begin{align*}
0 	&= \phi((v\#1)(1\#x)-(1\#x)(v\#1)+(u\#1)) \\
	&= \tornado (a' \# x + b' \# gx) - \tornado ( x(a')\#1 + x(b')\#g + x(c')\#x + x(d')\#gx + g(a')\#x - g(b')\#gx) \\
 		&\quad + (a \# 1 + b \# g + c \# x + d \# gx) \\
  	&= (a-\tornado x(a'))\#1 + (b-\tornado x(b'))\#g + (c-\tornado x(c'))\#x + (d+2\tornado b'- \tornado x(d'))\#gx.
\end{align*}
Thus, $x(a')=\tornado\inv a$, $x(b')= \tornado \inv b$, $x(c')= \tornado \inv c$ and $x(d')=2b'+ \tornado \inv d$.

Since $b_1'=0$ and $x(b')=\tornado\inv b=0$ and so $b' \in \cnt(S)$.
As $x(a')= \tornado\inv a$ and $a_k=0$ for $k>1$, then $a' = \alpha \tornado \inv v + z$ for some $z \in \cnt(S)$.


By above computations we have the following simplification.
\begin{align*}
\phi((u\#1)(v\#1) + (v\#1)(u\#1))
	&= ((aa'+a'a)+ ca) \#1 \\
	&\quad + ( (ac'-c'a) + 2ca' + c^2) \# x \\
    &\quad + ( (ad'+d'a) + 2b'c ) \# gx .
\end{align*}
From the identity component above we have 
\[ 0 	= aa' + a'a + ca
		= (\alpha u)(\alpha \tornado\inv v + z)+(\alpha \tornado\inv v + z)(\alpha u) + c(\alpha u)
    	= (\alpha u)(2z+c).\]
Since $\alpha \neq 0$, then $z = -c/2$.
\end{proof}



\begin{lemma} Let $\phi \in \rAut(S)$ be of type I. Then all monomials appearing as summands of $c'$ and $d'$ have even $v$-degree.
\label{lem.typei}
\end{lemma}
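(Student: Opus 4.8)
The plan is to exploit that $\phi$ must preserve the center. By Lemma~\ref{lem.autcnt}, $\phi(v^2\#1)=\phi(v\#1)^2$ lies in $\cnt(S)=\kk[u^2,v^2]$, which sits in the $\#1$ component. Writing $P=\phi(v\#1)=a'\#1+b'\#g+c'\#x+d'\#gx$, this forces the $\#g$, $\#x$, and $\#gx$ components of $P^2$ to vanish. So I would compute these three components explicitly via the smash-product multiplication and read off the claimed parity of $c'$ and $d'$ from them.

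To expand $P^2$ I would feed in all the structural data already established for a type~I automorphism: $a'=\alpha\tornado\inv v - c/2$ with $b',c\in\cnt(S)$, together with $x(c')=\tornado\inv c$, $x(d')=2b'$, and $g(a')=a'$, $g(b')=b'$, $g(c')=-c'$, $g(d')=-d'$ (since $a',b'$ lie in the $g$-fixed even $u$-degree space and $c',d'$ in the $-1$ eigenspace of odd $u$-degree). The one simplification that makes the bookkeeping tractable is that, because $c'$ and $d'$ have odd $u$-degree, one has $vc'+c'v=0$ and $vd'+d'v=0$ in $\kk_{-1}[u,v]$; hence the $v$-summand of $a'$ contributes nothing to the expressions $a'c'+c'a'$ and $a'd'+d'a'$, leaving only the central contributions. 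After substitution, the vanishing of the $\#x$ and $\#gx$ components should collapse to relations tying $c'$ and $d'$ to the central elements $c$ and $b'$.

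I would then split each element of $\kk_{-1}[u,v]$ into even- and odd-$v$-degree parts. Since $x$ annihilates even-$v$-degree monomials and acts injectively on odd-$v$-degree monomials, the relations $x(c')=\tornado\inv c$ and $x(d')=2b'$ say exactly that the odd-$v$-degree parts of $c'$ and $d'$ are governed by $c$ and $b'$ respectively; thus the desired conclusion is equivalent to $c=0$ and $b'=0$, and the goal becomes to extract precisely these two vanishings. A conceptually cleaner way to organize the whole argument is via the $\ZZ/2$-grading on $S$ by $v$-parity, with $\deg_2 u=0$, $\deg_2 v=1$, $\deg_2 g=0$, $\deg_2 x=1$: one checks that this is an algebra grading and that both the even- and odd-type maps are $\deg_2$-homogeneous of the correct degree, so the lemma is equivalent to the statement that a type~I automorphism sends $v\#1$ to a $\deg_2$-homogeneous (odd) element.

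The step I expect to be the main obstacle is exactly the extraction of $c=0$ and $b'=0$, equivalently homogeneity for $\deg_2$. The bracket relations coming from $\phi$ being an algebra map on $uv+vu=0$, and from the centrality of $\phi(v^2\#1)$, tend to reproduce the already-known identities $x(c')=\tornado\inv c$ and $x(d')=2b'$ rather than genuinely new constraints, and they degenerate in the borderline case $\tornado=1$. Overcoming this will require input beyond the commutator identities: I would attempt an induction on $A$-degree that feeds in the surjectivity of $\phi$ (so that $u\#1$ and $v\#1$ must lie in the image, constraining the higher-degree tails of $c'$ and $d'$) to rule out any surviving odd-$v$-degree contribution. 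Pinning down this inductive/surjectivity input is the delicate heart of the argument, and is where I would concentrate the careful verification.
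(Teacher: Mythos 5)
Your preparatory reductions are correct and consistent with the paper's setup: since $x$ annihilates every monomial of $\kk_{-1}[u,v]$ of even $v$-degree and is injective on the span of the odd-$v$-degree monomials, the identities $x(c')=\tornado\inv c$ and $x(d')=2b'$ do make the lemma equivalent to the two vanishings $c=0$ and $b'=0$, and your $\ZZ/2$-grading by $v$-parity is a valid (though purely cosmetic) repackaging of that equivalence. You are also right that the centrality and commutator identities alone cannot force these vanishings; the paper does not attempt that either. The problem is that at exactly the point you label ``the delicate heart of the argument,'' the proposal stops being a proof: you say you \emph{would attempt} an induction on $A$-degree feeding in surjectivity of $\phi$, but you never specify the inductive statement, what quantity decreases, or how surjectivity produces a constraint. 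Since that step \emph{is} the lemma, this is a genuine gap rather than a variant argument.

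For comparison, here is how the paper turns surjectivity into a usable constraint. Choose $r$ with $\phi(r)=v\#1$, expand $r=\sum_i\gamma_i\,(v\#1)^i(u\#1)^{j_i}(1\#g)^{k_i}(1\#x)^{\ell_i}$, and apply $\phi$ to get $v\#1=\sum_i\gamma_i\,\phi(v\#1)^i\,\phi(u\#1)^{j_i}\phi(1\#g)^{k_i}\phi(1\#x)^{\ell_i}$. Because $\phi(u\#1)=\alpha u\#1+c\#x$ with $c\in\kk[u^2,v^2]$, each factor $\phi(u\#1)^{j_i}\phi(1\#g)^{k_i}\phi(1\#x)^{\ell_i}$ has all four components in $\kk[u,v^2]$, and by Lemma~\ref{lem.autcnt} one has $\phi(v\#1)^2=(\kappa v^2+z)\#1$ with $z\in\kk[u^2]$, so odd $v$-degrees can enter the right-hand side only through odd powers $i$ of $\phi(v\#1)$. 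A leading-term argument on the largest odd exponent $N\geq 3$ shows the accompanying factor's four components must all vanish, a contradiction; hence only $i=1$ contributes odd $v$-degree. Then any odd-$v$-degree monomial inside $c'$ or $d'$ would create a nonzero $\#x$ or $\#gx$ component of odd $v$-degree on the right-hand side, whereas $v\#1$ has none. This chain --- preimage expansion, parity bookkeeping, elimination of $N\geq 3$, and the componentwise contradiction at $i=1$ --- is precisely the content your proposal defers, so you would need to supply it (or a genuine substitute) before the argument closes.
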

\begin{proof}
Based on the above computations, we have that
\begin{align*} \phi(u \# 1) &= \alpha u \# 1 + c \# x \\
\phi(v \# 1) &= \left(\alpha \tornado\inv v  - \frac{c}{2}\right) \# 1 + b' \# g + c' \#x  + d' \# gx
\end{align*}
for some $b', c \in \cnt (A)$, $c', d' \in A^-$. Since $\phi$ is an automorphism, there exists some $r \in A \# H$ such that $\phi(r) = v \# 1$. By using the relations in $A \# H$, we can write $r$ as a finite sum
\[ r = \sum_{i} \gamma_i \cdot (v\#1)^{i} (u\# 1)^{j_i} (1\#g)^{k_i} (1\#x)^{\ell_i}.
\]
for some $\gamma_i \in \kk$ and some $i, j_i, k_i, \ell_i \in \NN$. We therefore have
\[ v\# 1 = \sum_{i} \gamma_i \cdot \phi(v\#1)^{i} \phi(u\# 1)^{j_i} \phi(1\#g)^{k_i} \phi(1\#x)^{\ell_i}.
\]

Since $c \in \cnt(A) = \kk[u^2,v^2]$, for each $i$, we have
\[ \phi(u\# 1)^{j_i} \phi(1\#g)^{k_i} \phi(1\#x)^{\ell_i} = h_{i,1} \# 1 + h_{i,2} \#g + h_{i,3} \#x + h_{i,4} \# gx
\]
for some $h_{i,1}, h_{i,2}, h_{i,3}, h_{i,4} \in \kk[u,v^2]$. Therefore,
\begin{equation} \label{eq.v} v\# 1 = \sum_{i} \gamma_i \cdot \left( \left(\alpha \tornado\inv v  - \frac{c}{2}\right) \# 1 + b' \# g + c' \#x  + d' \# gx\right)^{i} \left(h_{i,1} \# 1 + h_{i,2} \#g + h_{i,3} \#x + h_{i,4} \# gx \right).
\end{equation}

We now consider the terms on the right-hand side that involve terms with odd $v$-degree. By Lemma~\ref{lem.autcnt}, $\phi(v\#1)^2 = (\kappa v^2 + z) \# 1$ for some $\kappa \in \kk^\times$, $z \in \kk[u^2]$, so these terms only occur when $i$ is odd. Suppose for contradiction that the largest odd power $N$ with a nonzero coefficient has $N \geq 3$. The largest odd $v$-degree appearing on the right-hand side then come from the term
\begin{align*} \gamma_N (\kappa v^2+z)^{\frac{N-1}{2}} \left( \left(\alpha\tornado\inv v  - \frac{c}{2}\right) \# 1 + b' \# g + c' \#x  + d' \# gx\right) \left(h_{N,1} \# 1 + h_{N,2} \#g + h_{N,3} \#x + h_{N,4} \# gx \right) .
\end{align*}
After multiplying (recalling that $h_{N,i} \in \kk[u,v^2]$), the identity component of this term is
\[\gamma_N (\kappa v^2+z)^{\frac{N-1}{2}} \left( \left(\alpha \tornado\inv v  - \frac{c}{2}\right)  h_{N,1} + b' g(h_{N,2}) \right) \# 1.
\] 
The product $\gamma_N (\alpha^2 \tornado^{-2} v^2+z) \left(\alpha \tornado\inv v  - \frac{c}{2}\right)  h_{N,1}$ involves the unique term of highest odd $v$-degree occurring in the identity component of \eqref{eq.v}. Since $N \geq 3$, this term must be zero. Hence, $h_{N,1} = 0$. By a similar computation in the other components, we conclude that $h_{N,2} = h_{N,3} = h_{N,4} = 0$. This is a contradiction.

Now there is a unique term in \eqref{eq.v} involving any terms with odd $v$-degree, which occurs when $i = 1$. Since $c' , d' \in A^-$, if $c'$ and $d'$ are non-zero, all of their monomial summands have odd $u$-degree. If any of their monomial summands have odd $v$-degree, this contradicts \eqref{eq.v}, as only one term involves any terms of odd $v$-degree, so the right-hand side must have a non-zero $x$ and $gx$ component with odd $v$-degree.
\end{proof}

We are now able to describe all automorphisms in $\rAut(S)$.

\begin{proof}[Proof of Theorem \ref{thm.raut}]
We have seen that any $\phi \in \rAut(S)$ is of type I or type II. 
Suppose $\phi$ is of type I. By Lemma~\ref{lem.typei}, each summand of $c'$ and $d'$ 
has even $v$-degree, and so $c = x(c') = 0$ and $b' = x(d')/2 = 0$. 
Since the $g$ component of $\phi(v^2 \# 1)$ is $(2a'b' - d'a) \# g$, 
we also conclude that $d' = 0$. Hence, $\phi$ is of the form
\begin{align*} &\phi(u \# 1) = \alpha u \# 1 &
& \phi(v \# 1) = \alpha \tornado\inv v \# 1 + c' \#x  \\
& \phi(1 \# g) = \varepsilon \# g & &\phi(1 \#x) = \tornado \# x
\end{align*}
for some $c' \in A^-$ with even $v$-degree. 
Now $\phi(v^2 \#1) = \alpha^2 \tornado^{-2} v^2 + c'u \# 1$, so by Lemma~\ref{lem.autcnt}, $c'$ is a polynomial in $u$ of odd degree and hence $\phi$ is of even type.

Now suppose that $\phi$ is of type II. Consider the following map:
\begin{align*}
	&\psi(u\# 1) = u\#g - 2v\# gx &    &\psi(v\# 1) = v\# g\\
& \psi(1 \# g) = 1 \# g & &\psi(1 \#x) = 1 \# x.
\end{align*}
It is clear that $\psi$ is an automorphism of type II.
Further,
\[ \phi(\psi(v\#1)) = (a'\# 1 + b'\# g + c'\#x + d'\#gx)(1\#g) = a'\# g + b'\# 1 - c'\#gx - d'\#x.\]
Since $\phi \circ \psi$ is an automorphism of type I,
then it is of even type.
Hence, $a'=c'=0$, $b'=\alpha \tornado\inv v$, 
and $d'$ is a linear combination of odd powers of $u$. Now
\begin{align*}
	\phi(\psi(u\#1)) 
    	&= (a\# 1 + b\# g + c\# x + d\# gx)(1\#g) - 2(b'\#g + d'\#gx)(1\#gx) \\
        &= (a\# g + b\# 1 - c\# gx - (d+2b')\#x.
\end{align*}
Thus, $a=c=0$, $b=\alpha u$, and $d=-2b' = -2\alpha \tornado\inv v$ and so $\phi$ is of odd type.
\end{proof}

\begin{question}What is the full automorphism group $\Aut(S)$?
\end{question}

\bibliographystyle{plain}

\end{document}